\newcommand{\romanletters}
{\renewcommand{\theenumi}{\roman{enumi}}
\renewcommand{\labelenumi}{\textup{(}\theenumi\textup{)}}}
\newcommand{\letters}
{\renewcommand{\theenumi}{\alph{enumi}}
\renewcommand{\labelenumi}{\textup{(}\theenumi\textup{)}}}
\newtheorem{theorem}{Theorem}
\newtheorem{proposition}[theorem]{Proposition}
\newtheorem{corollary}[theorem]{Corollary}
\newtheorem{lemma}[theorem]{Lemma}
\newtheorem{definition}[theorem]{Definition}
\newcommand{\<}{\langle}
\renewcommand{\>}{\rangle}
\renewcommand{\)}{\textup{)}}
\newcommand{\p}{\varphi}
\renewcommand{\b}{\Lambda}
\renewcommand{\phi}{\varphi}
\newcommand{\Mbar}{\smash[t]{\overline{M}}}
\newcommand{\Bbar}{\smash[t]{\overline{B}}}
\newcommand{\e}{\varepsilon}
\newcommand{\R}{\mathbb R}
\newcommand{\U}{\mathcal{U}}
\newcommand{\h}{\mathbb{H}}
\newcommand{\X}{\Omega}
\newcommand{\Hbar}{\smash[t]{\overline{\h}}}
\newcommand{\linP}{\mathcal{P}}
\newcommand{\Lie}{\mathcal{L}}
\newcommand{\vlap}{L}
\newcommand{\n}{\mathcal{N}}
\newcommand{\A}{\mathcal{A}}
\newcommand{\Q}{\mathcal{Q}}
\newcommand{\ghyp}{{\breve g}}
\newcommand{\rhohyp}{\breve \rho}
\newcommand{\lhyp}{\vlap_{\ghyp}}
\DeclareMathOperator{\Tr}{Tr}
\DeclareMathOperator{\supp}{supp}
\DeclareMathOperator{\divergence}{div}
\DeclareMathOperator{\grad}{grad}
\def\crn#1#2{{\vcenter{\vbox{
  \hbox{\kern#2pt \vrule width.#2pt height#1pt
     }
    \hrule height.#2pt}}}}
\newcommand{\intprod}{\mathchoice\crn54\crn54\crn{3.75}3\crn{2.5}2}
\newcommand{\into}{\mathbin{\intprod}}
\newcounter{mnotecount}[section]
\let\oldmarginpar\marginpar
\renewcommand\marginpar[1]{\-\oldmarginpar[\raggedleft\footnotesize #1]%
{\raggedright\footnotesize #1}}
\begin{document}

\title[Asymptotic gluing]{Asymptotic gluing of asymptotically hyperbolic solutions to the Einstein constraint equations}

\author{James Isenberg, John M. Lee, Iva Stavrov Allen}
\date{\today}

\keywords{constraint equations; asymptotically hyperbolic; gluing.}
\subjclass[2000]{Primary  83C05; Secondary  83C30,  53C21}
\thanks{Research supported in part by NSF grant DMS-0406060 at Washington and NSF grant PHY-0652903 at Oregon.}


\begin{abstract}
We show that asymptotically hyperbolic solutions of the Einstein constraint equations with constant mean curvature can be glued in such a way that their asymptotic regions are connected.
\end{abstract}

\maketitle

\section{Introduction}\label{intro}

One of the most useful ways to produce new solutions of the Einstein constraint equations is via gluing techniques.
The standard gluing construction is the following:
We presume that $(M, g, K)$ is an Einstein initial data set, with $M$ a smooth $n$-dimensional manifold, $g$ a Riemannian metric on $M$, and $K$ a symmetric tensor field on $M$. We further assume that this set of data satisfies the (vacuum) Einstein constraint equations
\begin{align}
&\label{1steq} \mathrm{div}_{g} K- \nabla \Tr_{g}K=0,\\
&\label{2ndeq}R(g)-|K|^2_{g}+(\Tr_{g} K)^2=0,
\end{align}
which are the necessary and sufficient conditions for $(M, g, K)$ to generate a spacetime solution of the (vacuum) Einstein gravitational field equations via the Cauchy problem
\cite{ivon}.
(Here $\mathrm{div}_g$ is the divergence operator, $\Tr_g$ is the trace operator, $R(g)$ is the scalar curvature, and $|\cdot|_g$ is the tensor
norm, all corresponding to the metric $g$.)

Choosing a pair of points $p_1, p_2 \in M$, one shows that
there is
a family of new solutions
$({M_\e}, {g_\e}, {K_\e})$ 
of the constraint equations
in which
(i) ${M_\e}$ is obtained from $M$ by (connected sum) surgery joining $p_1$ and $p_2$
and (ii) outside of a neighborhood of the connected sum bridge in ${M_\e}$, the  data $({g_\e}, {K_\e})$
can be made as close as desired to $(g,K)$ (in a sense to be made precise later)
by taking $\e$ sufficiently small.
We note that this gluing construction allows for the possibility that the manifold $M$  consists of two disconnected components; then if $p_1$ is chosen to lie in one of the components and $p_2$ in the other, the new glued solution effectively connects two disconnected solutions of the constraint equations.

The mathematics and the utility of the gluing of solutions of the Einstein constraint equations are discussed in a series of papers \cite{IMP1, IMP2, CIP, IMaxP}, which show that gluing can be carried out for a wide variety of initial data sets: they can be compact, asymptotically Euclidean (``AE"), or asymptotically hyperbolic (``AH"), and they can be vacuum solutions or non-vacuum solutions with various coupled matter fields. This past work shows that in some cases the gluing can be done so that the glued solution exactly matches the original one outside the gluing region,
so long as certain nondegeneracy conditions (``no KIDS") hold at the points of gluing. When this can be done, the gluing is said to be \textit{localized}.
More generally,  the glued solutions may not exactly match the original ones outside the gluing region, but can be constructed so that the data set is  arbitrarily close to the original solution away from this region; this is called \textit{non-localized} gluing.

Say one chooses a pair of (disjoint) asymptotically hyperbolic solutions of the constraints and glues them at a pair of points satisfying the necessary conditions, as described in either \cite {IMP1} or \cite{CIP}. If each of the original AH data sets has a single (connected) asymptotic region (as described below in Section \ref{sec:AHID}), then the glued data set, which is also asymptotically hyperbolic,  necessarily has two disjoint asymptotic regions. If we are working with AH  initial data sets which are viewed as data on partial Cauchy surfaces that intersect null infinity in an asymptotically simple spacetime  \cite{Wald}, then the existence of multiple asymptotic regions is problematic for physical modeling.

In the present paper, we show that one can glue asymptotically hyperbolic solutions of the constraint equations in such a way that in fact the asymptotic region of the glued data set is connected. The idea, which is modeled after the studies  of Mazzeo and Pacard on gluing asymptotically hyperbolic Einstein manifolds \cite{rafeandpacard}, is to use the conformally compactified representation of asymptotically hyperbolic geometries, which models a complete, asymptotically hyperbolic manifold as the interior of a compact manifold with boundary (see Section \ref{sec:AHID} below).  The boundary of this manifold, which we call the \emph{ideal boundary}, 
is not part of the physical initial manifold, but represents asymptotic directions at infinity. 
The gluing is done using points $p_1$ and $p_2$ lying on the ideal boundary.  In this context, we will use the term \emph{asymptotic region} to refer to any open collar neighborhood of the boundary, with the boundary itself deleted.
If the original manifold has a single connected asymptotic region, then so does the glued manifold.
(See Fig.\ \ref{fig:1}.)
\begin{figure}[h]
\begin{minipage}[b]{.4\textwidth}
\begin{center}
{\includegraphics{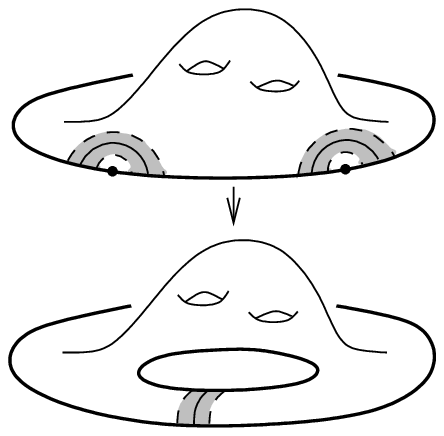}}
\caption{}\label{fig:1}
\end{center}
\end{minipage}
\begin{minipage}[b]{.59\textwidth}
\begin{center}
{\includegraphics{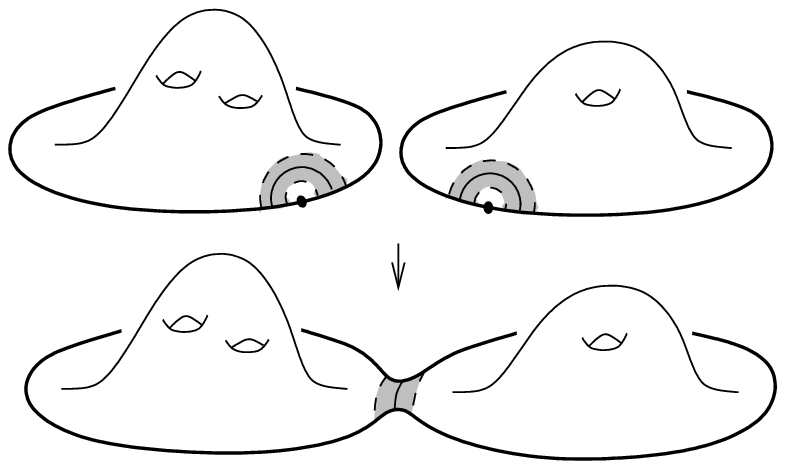}}
\caption{}\label{fig:2}
\end{center}
\end{minipage}
\end{figure}
If the original manifold has two disjoint 
connected 
asymptotic regions with one boundary point chosen on
the ideal boundary of each region,
then the glued manifold will have a single connected asymptotic region
(Fig.\ \ref{fig:2}).
(Although the ideal boundary in Fig.\ \ref{fig:1} appears to be disconnected, in the cases of interest the boundary of the glued manifold will be a connected sum of connected 
$2$-manifolds, which is always connected.)

The results we present here do not hold for general AH initial data sets. We require that the data have constant mean curvature (``CMC"), in the sense that $\Tr_g K$ is constant on $M$. Also, our results thus far provide sufficient conditions for (asymptotic) non-localized gluing. In a future paper, we hope to both eliminate the CMC restriction, and find conditions which are sufficient for the gluing to be localized.

To set up our work here, we start in Section \ref{sec:prelim} with a definition and discussion of asymptotically hyperbolic initial data and their polyhomogeneous behavior 
in the asymptotic region. The section continues with a brief description of the conformal method for generating solutions of the constraint equations  and the   simplifications of the method which occur for constant mean curvature data. 
The conformal method is discussed here because it plays an important role both in constructing basic examples of AH initial data \cite{piot} and in carrying out our gluing procedure. We conclude Section \ref{sec:prelim} with an overview of those aspects of \cite{piot} on which our work relies. We state our main theorem in Section \ref{sec:main-thm}. We then carry out the first part of the gluing construction (``splicing") in Section \ref{sec:IMP}, producing a family of initial data sets depending on a small parameter $\e$,
which satisfy the constraint equations approximately. Our splicing construction is modeled after the 
construction of asymptotically hyperbolic Einstein metrics
in  \cite{rafeandpacard}. 

The proof of the main theorem relies on the use of weighted H\"older spaces. We define and discuss these spaces in Section \ref{sec:analysis}, following \cite{lee}. In the rest of Section \ref{sec:analysis}, we study certain elliptic operators which act on weighted H\"older spaces, and prove that they are invertible with norms of their inverses bounded uniformly in $\e$. One of the key steps here  is a blow-up analysis argument similar to that of   \cite{rafeandpacard}. Our analytical results are applied in Section \ref{sec:correcting-II} to correcting the traceless part of the glued second fundamental form. Finally, in Section \ref{lish} we use results of \cite{GL} and a contraction mapping argument to solve the Lichnerowicz equation and complete the proof of the main theorem. Section \ref{sec:concl} contains concluding remarks.

\section{Preliminaries}\label{sec:prelim}

In this section we define and discuss examples of asymptotically hyperbolic initial data sets. We also review the conformal method for creating solutions of the constraint equations. 

\subsection{Asymptotically Hyperbolic Initial Data Sets}\label{sec:AHID}
The model for asymptotically hyperbolic initial data sets is the
hyperboloid in
$(3+1)$-dimensional Minkowski space,
\begin{equation*}
\breve M = \{(x^0,x^1,x^2,x^3): x^0>0 \text{ and } (x^0)^2 = (x^1)^2 + (x^2)^2 + (x^3)^2+1\},
\end{equation*}
together with
its induced Riemannian metric $\breve g$
(which is a model for the hyperbolic metric of constant sectional curvature $-1$)
and its extrinsic curvature $\breve K\equiv \breve g$; it is straightforward to check that
$(\breve M,\breve g, \breve K)$ satisfies
\eqref{1steq} and \eqref{2ndeq}.

Roughly speaking, an asymptotically hyperbolic
initial data set $(M,g,K)$ is one in which the Riemannian metric $g$ is
complete and approaches constant negative curvature as one approaches the ends of the manifold, and
the extrinsic curvature $K$ asymptotically approaches
a pure trace tensor field that is a constant multiple of the metric.

A more precise definition of asymptotic hyperbolicity is motivated by the Poincar\'e disk model of hyperbolic space.
In that model, hyperbolic space is given by the (smooth) metric
\begin{equation}
g=\frac{4}{(1-|x|^2)^2}\left( (dx^1)^2 + \dots +(dx^n)^2\right),
\end{equation}
on the open unit ball.
If we define the function $\rho =\frac{1}{2}(1-|x|^2)$, then
$\overline{g}= \rho^2 g$ is the Euclidean metric and thus extends smoothly to the
closure of the open ball.
Note that the function $\rho$ is smooth on the closed ball, it picks out the boundary of the ball since $\rho^{-1}(0)$ is equal to this boundary, and it satisfies the derivative condition $|d\rho|_{\overline{g}}=\big|\frac{1}{2}d\big(1-|x|^2\big)\big|_{\overline{g}}=1$ on the boundary of the ball.

With this example in mind,
we
make the following definitions.
We suppose throughout this paper that
$M$ is the interior of a smooth, compact manifold with boundary $\Mbar$.
A \emph{defining function} for $\Mbar$ is a
nonnegative real-valued function $\rho: \Mbar \rightarrow \mathbb{R}$ of class at least $C^1$
such that
$\rho^{-1}(0)=\partial \Mbar$ and $d\rho$ does not vanish on $\partial\Mbar$.
Given a nonnegative integer $k$ and a real number $\alpha\in [0,1]$,
a smooth Riemannian metric $g$ on $M$
is said to be \textit{conformally compact of class $C^{k}$}
(or $C^{k,\alpha}$, or $C^\infty$)
if there exists a smooth defining function $\rho$ and a
Riemannian metric
$\overline{g}$ on $\Mbar$ of class
$C^k$, $C^{k,\alpha}$, or $C^\infty$, respectively,
with $g=\rho^{-2} \overline{g}|_{M}$.
\emph{Smoothly conformally compact} means the same as conformally
compact of class $C^\infty$.

If $g$ is conformally compact and if in addition
$|d\rho|_{\overline g} = 1$ on $\partial\Mbar$, then $g$ is said to be
\emph{asymptotically hyperbolic} (of class $C^k$, $C^{k,\alpha}$, or $C^\infty$,
as appropriate).
One verifies easily that any asymptotically hyperbolic metric of
class at least $C^2$
has sectional curvatures approaching $-1$ near $\partial\Mbar$
(see \cite{Mazzeo-edge}), and so
indeed has the intuitive properties mentioned above.
The boundary $\partial\Mbar$ is called the
\emph{ideal boundary}, and $\partial\Mbar$ together with its induced metric
$\iota^*\overline g$ (where $\iota\colon \partial\Mbar\hookrightarrow\Mbar$ is inclusion),
is called \textit{conformal infinity}.
Note that for a given AH metric, the choice of the defining function is not unique,
so the geometry of the conformal infinity is only defined up to a conformal factor.

Naively, one might hope to
work with initial data sets $(M,g,K)$ in which $(M,g)$ is a
smoothly asymptotically hyperbolic Riemannian manifold, and $K$ is a
symmetric $2$-tensor field such that $\rho^2 K$ has a smooth extension to $\Mbar$ and
is equal to a constant multiple of the metric on the ideal boundary.
Unfortunately, however, it is shown in
\cite{piot2} that there are obstructions to finding solutions
with this degree of smoothness,
marked by the presence of log terms in the asymptotic expansions of $g$ and $K$
near the ideal boundary.
For this reason, instead of smoothness  we have to settle for a slightly weaker
notion called polyhomogeneity, which we now define.

A function $f\colon M\to \R$ is said to be \emph{polyhomogeneous} (cf.\ \cite{Mazzeo-edge})
if it is smooth in $M$, and there
exist a sequence of real numbers $s_i \nearrow +\infty$, a sequence of
nonnegative integers $\{q_i\}$, and  functions $\overline f_{ij}\in C^\infty(\Mbar)$  such that
\begin{equation}\label{eq:def-polyhomo}
f \sim \sum_{i=1}^\infty \sum_{j=0} ^{q_i} \rho^{s_i} (\log \rho)^j \overline f_{ij}
\end{equation}
in the sense that
for any positive integer $K$, there exists a positive
integer $N$ such that the difference
\begin{displaymath}
f -
\sum_{i=1}^{N}\sum_{j=0}^{q_i} \rho^{s_i} (\log \rho)^j \overline f_{ij}
\end{displaymath}
is $O(\rho^K)$ as $\rho\to 0$, and remains $O(\rho^K)$ after  being differentiated any number of
times by smooth vector fields on $\Mbar$ that are tangent to $\partial \Mbar$.
It is easy to check that sums and products of polyhomogeneous functions are
polyhomogeneous, as are quotients of polyhomogeneous functions provided
that the denominator has no log terms with its lowest power of $\rho$
(i.e., $q_1=0$) and provided that its leading term $\overline f_{10}$ does not vanish on the
ideal boundary.
A tensor field on $M$ is said to be polyhomogeneous if it is smooth
on $M$ and its component
functions are polyhomogeneous
in some smooth coordinate chart  in a neighborhood of
every ideal boundary point.
We define a
\emph{polyhomogeneous asymptotically hyperbolic Riemannian metric} on $M$
to be a polyhomogeneous Riemannian metric $g$ which is also conformally
compact of class at least $C^2$.

Now we come to the main definition of this section.

\begin{definition}\label{def:phg-dataset}
A \textit{polyhomogeneous asymptotically hyperbolic
initial data set} (sometimes called a \textit{hyperboloidal} initial data set)
is a triple $(M, g, K)$, in which
\begin{enumerate}\romanletters
\item
$M$ is the interior of a smooth, compact manifold with boundary $\Mbar$;
\item
$g$ is a polyhomogeneous AH Riemannian metric on $M$;
\item
$K$ is a polyhomogeneous symmetric covariant $2$-tensor field on $M$ with the
property that for any smooth defining function $\rho$,
$\rho^2 K$ has a $C^2$ extension to $\Mbar$ whose
restriction to $\partial\Mbar$ is
a constant multiple of (the extension of) $\rho^2 g$ there;
\item
the Einstein constraint equations \eqref{1steq} and \eqref{2ndeq} are satisfied.
\end{enumerate}
\end{definition}
This definition implies that $g$ and $K$ can be written in the form
\begin{align}
\label{eq:g-asymp}
g &= \rho^{-2} \overline g,\\
\label{eq:K-asymp}
K &= \frac{\tau}{n} g + \rho^{-1}\overline\mu,
\end{align}
where $\overline g$ is a polyhomogeneous Riemannian metric on $\Mbar$ that is
of class at least $C^2$ (and thus has log terms, if any, only with powers of
$\rho$ greater than $2$); $\tau$ is a polyhomogeneous $C^2$ scalar function on $\Mbar$ whose
restriction to $\partial\Mbar$ is constant; and $\overline\mu$ is a
polyhomogeneous symmetric $2$-tensor field on $\Mbar$ that is trace-free with
respect to $\overline g$ and has log terms only with powers of $\rho$
greater than $1$. 
A polyhomogeneous AH initial data set is defined to be \emph{CMC} if the mean curvature function $\tau=\Tr_g K$ is constant. 
We note that for such data $\tau$ must be identically equal to $3$. Indeed, 
the definition of CMC asymptotically  hyperbolic data
guarantees that the scalar curvature approaches the constant $-n(n-1) = -6$ at the
ideal boundary. Since the $g$-norm of $\rho^{-1}\overline \mu$ 
approaches zero at the ideal boundary,
inserting
\eqref{eq:K-asymp} into \eqref{2ndeq} and evaluating in the limit at the ideal boundary
implies that
the mean curvature $\tau$ must be identically equal to $3$.
Thus \eqref{eq:K-asymp} becomes
\begin{equation*}
K = g+\mu = \rho^{-2}\overline g + \rho^{-1} \overline\mu.
\end{equation*}
It is shown by Andersson and Chru\'sciel in \cite{piot} that CMC polyhomogeneous AH  initial data sets exist in abundance.

\subsection{The Conformal Method for Finding Solutions of the Constraint Equations}\label{sec:conf-method}
Both the construction of \cite{piot} and our gluing construction here are based on the conformal method, which (along with the closely related conformal thin sandwich method) is the most widely used method for producing solutions of the constraint equation. We proceed by reviewing this method.  

We start by introducing two auxiliary differential operators which are involved  in the conformal method. The first of the two is the 
{\it conformal Killing operator} $\mathcal{D}_{\lambda}$ which acts on vector fields $X$ as follows:
\begin{equation}\label{def:conf-killing}
(\mathcal{D}_{\lambda} X)_{cd}:=\frac{1}{2} \Lie_X \lambda_{cd}-\frac{1}{3}(\mathrm{div}_{\lambda}X)\lambda_{cd} = \frac{1}{2}( \nabla_c X_d +\nabla _d X_c )- \frac{1}{3} \nabla_a X^a \lambda_{cd}.
\end{equation}
The image of $\mathcal{D}_\lambda$ is contained in the space of symmetric $2$-tensors which  are traceless with respect to $\lambda$. The formal adjoint 
of $\mathcal{D}_{\lambda}$ is
\begin{equation}\label{adjoint}
\mathcal{D}_{\lambda}^*T=-\big(\divergence_{\lambda}T)^{\sharp};
\end{equation}
here (and throughout the paper) the symbol $^{\sharp}$ refers to raising an index. Another auxiliary  operator we use is the elliptic, formally self-adjoint \emph{vector Laplacian operator} 
$$\vlap_{\lambda} X=(\mathcal{D}_{\lambda}^*\circ \mathcal{D}_{\lambda})X=-\left(\divergence_\lambda \left(\mathcal{D}_\lambda X\right)\right)^\sharp.$$

The conformal method for $3$-dimensional manifolds (we restrict to $n=3$ for convenience) is based on the 
\emph{Lichnerowicz-York decomposition} 
of data \cite{CBY}
\begin{align}
\label{gdecomp}
g_{ab} &= \psi^4 \lambda_{ab},\\
\label{Kdecomp}
K_{cd} &= \psi^{-2}(\nu_{cd} + 2 (\mathcal{D}_{\lambda} W)_{cd}) +\frac{1}{3}\psi^4 \tau \lambda_{cd},
\end{align}
where $\lambda_{ab}$ is a Riemannian metric, $\nu_{cd}$ is a symmetric tensor field that is divergence free and trace free with respect to $\lambda_{ab}$, $\tau$ is a scalar function, $\psi$ is a positive definite scalar function and $W^c$ is a vector field. Substituting the field decompositions \eqref{gdecomp}--\eqref{Kdecomp}
into the vacuum
constraint equations (\ref{1steq})--(\ref{2ndeq}) and
using standard conformal transformation formulas for the scalar curvature and for divergences, we obtain
\begin{align}
\label{confmomeq}
(\vlap_{\lambda}W)_c &= \frac{1}{3}\psi^6 \nabla_c \tau,\\
\label{Lichneroeq}
\Delta_{\lambda} \psi &= \frac{1}{8} R(\lambda) \psi - \frac{1}{8} |\nu_{cd} + 2 (\mathcal{D}_{\lambda} W)_{cd}|^2_{\lambda} \psi^{-7} + \frac{1}{12} \tau^2 \psi^5.
\end{align}
(Here $\Delta_\lambda$ denotes the Laplace-Beltrami operator with respect to the metric $\lambda$.)

The idea of the conformal method is to choose any \emph{conformal data}  $(M, \lambda, \nu, \tau)$ in which $\nu$ is traceless and divergence-free with respect to $\lambda$, and then use the coupled PDE system (\ref{confmomeq})--(\ref{Lichneroeq}) to solve for the determined data $(\psi, W)$. If, for a given set of conformal data,  one can solve the system (\ref{confmomeq})--(\ref{Lichneroeq}), then the initial data fields obtained by recomposing the fields as in (\ref{gdecomp})--(\ref{Kdecomp}) provide a solution to the constraint equations.

To execute such a construction of initial data one needs to have a symmetric, traceless,
 and divergence-free tensor $\nu$. There is a standard method for finding such a tensor 
\cite{York}.
The idea behind the method is to start with an arbitrary traceless symmetric $2$-tensor field $\mu$ and then find a vector field $X$ which satisfies 
\begin{equation}\label{babyL}
L_\lambda X=(\divergence_{\lambda} \mu)^{\sharp}.
\end{equation}
Using \eqref{adjoint} one easily verifies that 
$\nu:=\mu+\mathcal{D}_\lambda X$ 
is symmetric, traceless and divergence-free.

There have been extensive studies to determine which sets of conformal data lead to solutions, and which do not. (See  \cite{BI} for a recent review.) This issue is best understood for conformal data with constant $\tau$, which leads to initial data with constant mean curvature (``CMC"). In this case, the constraint equations (\ref{confmomeq})--(\ref{Lichneroeq}) effectively decouple---the (unique) solution to (\ref{confmomeq}) is $\mathcal{D}_{\lambda}W=0$---and one need only analyze the solvability of the (remaining) Lichnerowicz equation
\begin{equation}
\label{LichneroeqW=0}
\Delta_{\lambda} \psi - \frac{1}{8} R(\lambda) \psi + \frac{1}{8} |\nu_{cd}|^2_{\lambda} \psi^{-7} - \frac{1}{12} \tau^2 \psi^5=0.
\end{equation}
It is also fairly well understood for ``near CMC" conformal data sets, which are characterized by $|\nabla \tau|_{\lambda}$ being sufficiently small.

\subsection{Conformal method and Andersson-Chru\'sciel initial data}\label{AC}
The Andersson-Chru\'sciel construction 
\cite{piot} of polyhomogeneous AH CMC initial data on a smoothly conformally compact 3-dimensional manifold $M$ starts by choosing a smoothly conformally compact metric $\lambda$ on $M$ and
a traceless,  symmetric $2$-tensor $\mu=\rho^{-1}\bar{\mu}$ for which $\bar{\mu}\in C^\infty(\Mbar)$. The construction continues by  finding a solution $X$ of  the vector Laplacian equation \eqref{babyL} and by considering the symmetric, traceless, divergence-free tensor 
$\nu:=\mu+\mathcal{D}_\lambda X$.
The polyhomogeneity of $X$, and consequently 
of $\nu$, arises naturally here as a consequence of the indicial roots of the vector Laplacian (for details on indicial roots  see \cite{lee}). More precisely, we have the asymptotic expansions
\begin{align}
X &\sim \rho^2 \overline X_0+(\rho^4\log\rho)\overline X_1 &&\text{with $\overline X_0, \overline X_1 \in C^\infty(\Mbar)$},
\label{eq:X-expansion}
\\
\nu &\sim
\rho^{-1} \overline \nu_0 + (\rho\log\rho)\overline\nu_1 &&\text{with $\overline\nu_0, \overline\nu_1\in C^\infty(\Mbar)$}.
\label{eq:nu-expansion}
\end{align}
Note that the description in \cite{piot} treats $\nu$ as a contravariant $2$-tensor,
which accounts for the difference between our powers of $\rho$ and the powers of $\Omega$ and $x$ in \cite{piot}.
It is useful that 
Andersson-Chru\'sciel \cite{piot} also prove a sequence of existence and uniqueness results regarding solutions of equations such as \eqref{babyL} in the context of polyhomogeneous tensor fields; we rely on these results when we conclude that the 
perturbations we make are polyhomogenous. 

The construction in \cite{piot} proceeds with the 
analysis of the Lichnerowicz equation. It is shown that the boundary value problem consisting of the Lichnerowicz equation \eqref{LichneroeqW=0} with $\tau =3$ and the boundary condition 
$\psi\big|_{\partial M}=1$
has a polyhomogeneous solution of the form 
\begin{equation}\label{eq:psi-expansion}
\psi \sim
1 + \rho\overline \psi_0 + \sum_{i=1}^\infty (\rho^{3}\log\rho)^i \overline\psi_i,
\qquad\text{with $\overline\psi_i\in C^\infty(\Mbar)$}.
\end{equation}
The exponent in $\rho^{3}\log\rho$ corresponds to the indicial root of the linearization of the Lichenrowicz operator (cf.\ the left-hand side of \eqref{LichneroeqW=0} with $\tau=3$) in the neighborhood of the constant function $\psi_0\equiv 1$. We point out that the analysis in \cite{piot} also includes an existence and uniqueness result for the 
Lichnerowicz boundary value problem  with  polyhomogeneous data. We need this result when we show  that our solution of the Lichnerowicz equation is polyhomogeneous (see Theorem \ref{thm:Lichnerowicz-main}).

For convenience,
 we say that a polyhomogeneous AH initial data set $(M,g,K)$ is of 
\emph{Andersson--Chru\'sciel  \(A--C\) type} if it can be written in the form 
$g = \psi^4 \lambda$ and $K = g+\psi^{-2}\nu$, in which 
$\lambda$ is a smoothly conformally compact metric, 
$\nu$ is 
a symmetric $2$-tensor field that is divergence free and trace free with respect to $\lambda$
and has an asymptotic
expansion of the form \eqref{eq:nu-expansion},
and $\psi$ is a positive function with an asymptotic expansion of the form \eqref{eq:psi-expansion}.
The discussion in \cite[Appendix A]{piot} shows that 
generically, initial data of A--C type are
the ``smoothest possible'' AH initial data.  

Throughout this paper, 
we assume only
that the asymptotically hyperbolic initial data sets 
we work with are polyhomogenous in the sense of Definition \ref{def:phg-dataset}
(which includes data of A--C type as a special case).
Our gluing procedure 
then produces new data of the same type.
We note that if our starting data set is of A--C type, it will not generally follow from our main theorem 
(Theorem \ref{thm:main} below) that the solution we obtain after gluing is also of A--C type; our results 
will only guarantee that this solution is polyhomogeneous in the sense of Definition \ref{def:phg-dataset}. 
The difficulty is that while A--C data is obtained by solving the conformal constraints with smooth 
conformal data, in carrying out the gluing we must solve these equations for conformal data which includes log terms.

\section{Main Gluing Theorem}\label{sec:main-thm}

With the conventions established above, we are ready to state our main theorem.
For convenience of exposition, we focus our attention here on the case $\dim M=3$.  
We have little 
doubt that the theorem and its proof generalize easily to higher dimensions.  

\begin{theorem}\label{thm:main}
Let $(M, g, K)$ be a polyhomogeneous asymptotically hyperbolic CMC
initial data set \(with $\dim M=3$\)
that satisfies the Einstein \(vacuum\) constraint
equations, and let $p_1,p_2$ be distinct points in the ideal boundary 
$\partial\Mbar$.
Then for each $\e > 0$ there exists a
polyhomogeneous AH initial data set
$(M_\e, g_{\e}, K_{\e})$  
such that
\begin{enumerate}\romanletters
\item
$M_\e$ is diffeomorphic to the interior of a boundary connected sum, obtained from
$\overline{M}$ by excising small half-balls $B_1$ around
$p_1$ and $B_2$ around $p_2$, and identifying their boundaries.
\item
$(M_\e, g_{\e}, K_{\e})$ is a solution to the vacuum constraints. 
\item
On the complement of any fixed small half-balls surrounding $p_1$ and $p_2$ in $M$,
and away from the
corresponding neck region in $M_\e$, the data
$(g_{\e}, K_{\e})$ converge uniformly in 
$C^{2,\alpha}\times C^{1,\alpha}$ to $(g,K)$,
for some
$\alpha\in(0,1)$.
\end{enumerate}
\end{theorem}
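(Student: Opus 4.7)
The plan is to implement the conformal method on the spliced manifold $M_\e$ and use a perturbation argument to correct an approximate solution into an exact one, following the template of \cite{rafeandpacard} adapted to the CMC constraint setting. Because $\tau\equiv 3$ is to be preserved throughout, the momentum and Hamiltonian constraints decouple, and I would address them in sequence.

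First, using the Lichnerowicz--York decomposition of Section \ref{sec:conf-method}, represent $(g,K)$ in terms of conformal data $(\lambda,\nu,\tau\equiv 3)$ with $\nu = \mu + \mathcal{D}_\lambda X$ and a positive function $\psi$ solving the Lichnerowicz equation. Near each ideal boundary point $p_i$, introduce half-space coordinates realizing a half-ball of hyperbolic half-space as the model. The splicing construction of Section \ref{sec:IMP}, modeled on \cite{rafeandpacard}, excises small $\e$-scale half-balls around $p_1,p_2$ and identifies their boundaries through a standard hyperbolic connect-sum neck. Using cutoff functions I then build approximate conformal data $(\lambda_\e, \mu_\e, \tau\equiv 3)$ on $M_\e$ that match the original data away from the neck and a fixed model inside it. Reassembling via \eqref{gdecomp}--\eqref{Kdecomp} yields an approximate solution $(g_\e^{\mathrm{app}}, K_\e^{\mathrm{app}})$ satisfying the constraints up to an error supported near the neck and controllably small in $\e$.

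The correction then proceeds in two stages. For the traceless part of $K$, solve the vector Laplacian equation $L_{\lambda_\e} X_\e = (\DIV_{\lambda_\e} \mu_\e)^\sharp$ on $M_\e$ in the weighted H\"older space framework of Section \ref{sec:analysis} (following \cite{lee}). With a suitable weight, $L_{\lambda_\e}$ is Fredholm and I expect its inverse to be bounded uniformly in $\e$. This is the \emph{principal obstacle}: the neck geometry degenerates as $\e\to 0$, yet the linearization must remain invertible with controlled norm. I would carry out a blow-up/contradiction argument in the style of \cite{rafeandpacard}: assuming a sequence of would-be counterexamples, one rescales appropriately at the neck, extracts limits on either the original background geometry or on the model neck, and rules them out via injectivity of the limit operators on weighted spaces. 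Once this uniform bound is in hand, $X_\e$ exists and $\nu_\e := \mu_\e + \mathcal{D}_{\lambda_\e} X_\e$ is divergence free and traceless with respect to $\lambda_\e$, yielding the correction carried out in Section \ref{sec:correcting-II}.

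For the Hamiltonian constraint, solve the Lichnerowicz equation \eqref{LichneroeqW=0} on $(M_\e, \lambda_\e)$ with the corrected $\nu_\e$, $\tau\equiv 3$, and boundary behavior $\psi\to 1$. Writing $\psi = \psi_\e^{\mathrm{app}} + u$, where $\psi_\e^{\mathrm{app}}$ is obtained by splicing the original $\psi$ against $1$ across the neck, yields a semilinear elliptic equation $\mathcal{L}_\e u = F_\e(u)$ with $\mathcal{L}_\e$ the linearization of the Lichnerowicz operator at $\psi_\e^{\mathrm{app}}$ and $F_\e$ encoding the splicing error and the quadratic remainder. Uniform-in-$\e$ inverse bounds for $\mathcal{L}_\e$ on weighted H\"older spaces, obtained by the same blow-up strategy together with the isomorphism results of \cite{GL} for the model hyperbolic problem, let us apply the Banach fixed-point theorem in a small ball of a suitable weighted space, with $\e$ small enough that the splicing error is within the contraction radius. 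Polyhomogeneity of the final $\psi$, and hence of $(g_\e, K_\e)$, then follows from the polyhomogeneous boundary regularity results of \cite{piot} applied to the vector Laplacian and Lichnerowicz boundary value problems; the $C^{2,\alpha}\times C^{1,\alpha}$ convergence claimed in (iii) is read off from the smallness of the corrections in the weighted norms on any compact set bounded away from the neck.
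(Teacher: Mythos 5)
Your splicing construction, the uniform invertibility of the vector Laplacian via blow-up at the neck, and the York-type correction producing $\nu_\e$ follow the paper's route essentially verbatim, and your appeal to the polyhomogeneity results of \cite{piot} at the end is also what the paper does. The genuine gap is in the Lichnerowicz step. You propose to run the contraction ``in a small ball of a suitable weighted space, with $\e$ small enough that the splicing error is within the contraction radius.'' But the splicing error for the Hamiltonian constraint is \emph{not} small in the weighted norm, uniformly in $\e$: with $\psi^{\mathrm{app}}_\e\equiv 1$ (which is what your splicing of $\psi$ against $1$ amounts to, since the given data need not be of A--C type and the only decomposition available in general is the trivial one $\lambda=g$, $\psi=1$), the error is $\Lie_\e(1)=-\tfrac18\bigl(R(g_\e)+6-|\nu_\e|^2_{g_\e}\bigr)$. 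Away from the neck the constraint $R(g)+6=|\mu|^2_g$ makes this $O(\sqrt\e)$ even after weighting, but on the neck $|\nu_\e|^2_{g_\e}$ is essentially zero while $R(g_\e)+6$ is merely \emph{comparable} to $\rho_\e$ (Lemma \ref{scalcurv:est1} gives uniform boundedness of $\rho_\e^{-1}(R(g_\e)+6)$, not smallness), so $\|\Lie_\e(1)\|_{0,\alpha,1}$ is of order one. The authors state explicitly that they were unable to prove weighted smallness; the fixed-point scheme as you set it up therefore cannot close in $C^{2,\alpha}_\delta$, and this is precisely the point where the paper departs from the naive weighted framework.

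The paper's resolution is to run the contraction in the \emph{unweighted} space $C^{2,\alpha}(M_\e)$, where $\|\Lie_\e(1)\|_{0,\alpha}=O(\sqrt\e)$ does hold because $\rho_\e\lesssim\sqrt\e$ on the gluing region (Lemma \ref{N(1)estimate}). This requires a uniform bound on $\linP_\e^{-1}$ in unweighted norms, which the paper obtains not by blow-up (note $\linP_\e$ is not a geometric operator because of the $|\nu_\e|^2_{g_\e}$ coefficient) but by a maximum-principle argument: Yau's generalized maximum principle combined with the arranged superharmonicity $\Delta_{g_\e}\rho_\e\le 0$ (Lemmas \ref{lemma:F} and \ref{LaplaceRho}, which is why the defining function $\rho$ and the interpolating function $F$ were chosen so carefully) and the uniform lower bound $f_\e\ge C>0$ (Lemma \ref{fest:lemma}), followed by uniform Schauder estimates. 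The boundary condition $\psi_\e\to 1$, which you would have gotten for free from a weighted contraction, has to be recovered separately: the Picard iterates lie in $C^{2,\alpha}_1(M_\e)$ and converge uniformly, so the limit extends continuously with value $1$ on $\partial\Mbar_\e$, after which Corollary 7.4.2 of \cite{piot} gives polyhomogeneity and the $C^2$ extension. A secondary omission in your sketch: to make $\|\divergence_{g_\e}\mu_\e\|_{0,\alpha,1}=O(\sqrt\e)$ one cannot use a generic $\e$-scale cutoff; the paper's cutoff \eqref{eq:def-chi} is deliberately anisotropic (scale $\e$ in $\rho$, scale $\sqrt\e$ tangentially) because the normal and tangential components of $\mu$ decay at different rates, and an isotropic cutoff yields only boundedness, not smallness, of the weighted divergence.
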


In fact, the convergence of $K_\e$ is a little better than $C^{1,\alpha}$: away from the fixed half-balls, it actually converges in a weighted $C^{1,\alpha}$
space.  See Theorem \ref{thm:nu-convergence} for the precise statement. 

Note that, as is the case for the non-localized gluing of AH data sets at interior points (see \cite{IMP1}), there is no need to impose any nondegeneracy 
conditions on the data in the neighborhood of the gluing points $p_1$ and $p_2$.

\section{Splicing Construction}\label{sec:IMP}

We presume 
that we are given a $3$-dimensional CMC
polyhomogeneous asymptotically hyperbolic
initial data set
$(M, g, K)$, which is not assumed to be connected.
We let $\rho$ denote a chosen smooth defining function for $\overline M$.
We may write 
\begin{equation*}
K = g+\mu = \rho^{-2}\overline g + \rho^{-1} \overline\mu,
\end{equation*}
where 
$\overline g$ and $\overline \mu$ are polyhomogeneous,
$\overline g\in C^2(\Mbar)$, $\overline\mu\in C^1(\Mbar)$, and 
$\overline\mu$ is trace-free with respect to $\overline g$ (or, 
equivalently, $g$).  
Note that the assumption that $\overline g$ is polyhomogeneous and of class $C^2$
means that the first log term in the expansion of $\overline g$ must occur with a power of $\rho$
strictly greater than $2$, and thus $\overline g$ is actually
in $C^{2,\alpha}(\Mbar)$ for some $\alpha\in (0,1)$; and similarly
$\overline\mu\in C^{1,\alpha}(\overline M)$.

The gluing construction is a step by step procedure. We outline the main steps here:
The first step, which we call \emph{splicing},
involves the construction of a one-parameter family of manifolds and initial data sets that are CMC and polyhomogeneous AH, but that only approximately solve the constraint equations. 
(In most of the literature discussing gluing constructions, both the first step leading to approximate solutions and the complete construction leading to exact solutions are called ``gluing."
Here, to distinguish the two, we call the procedure leading to the approximate solutions ``splicing.")

The new manifolds $M_{\e}$ are obtained by a connected sum construction which is executed in the \emph{preferred background coordinates}. 
The parameter $\e$ labels the coordinate ``size" of the ``bridge", or gluing region. Next we use cutoff functions tied to the parametrized gluing region to construct a parametrized set of metrics $g_{\e}$ on $M_{\e}$. To verify that these spliced metrics are all asymptotically hyperbolic, we also construct a parametrized set of spliced defining functions $\rho_{\e}$. Using a different cutoff procedure, we produce a family of (spliced) symmetric $2$-tensors
$\mu_{\e}$ that, by construction, are trace free with respect to the corresponding $g_{\e}$, but are generally \textit{not} divergence free with respect to $g_{\e}$. The next two steps involve deformations of the spliced data sets $(M_{\e}, g_{\e}, K_{\e}=g_{\e}+\mu_{\e})$ to produce the glued data sets which satisfy the constraints and have the desired limit properties. To deform $\mu_{\e}$, we first estimate its divergence, and then (following the standard York prescription) solve a linear elliptic system (based on the vector Laplacian $\vlap_{g_\e}$) whose solution tensor deforms $\mu_{\e}$ to a new family of tensors $\nu_{\e}$ that are divergence free. To deform the metric, we treat $(M_{\e}, g_{\e}, \nu_{\e}, 3)$ as a set of CMC conformal data, and proceed to solve the Lichnerowicz equation (\ref{LichneroeqW=0}) for a family of conformal factors $\psi_{\e}$. The 
$\e$-parametrized data sets $(M_{\e}, \psi^4_{\e} g_{\e}, \psi^{-2}_{\e} \nu_{\e} +  \psi^4_{\e}g_{\e})$, which we call the glued data, then solve the constraint equations, and are verified to approach arbitrarily close (as $\e \rightarrow 0)$ to the original data away from the gluing region.

In the rest of this section, we detail the splicing constructions. We detail the deformation steps in subsequent sections.

\subsection{Preferred background coordinates}
We focus first on the given polyhomogeneous
asymptotically hyperbolic geometry $(M, g)$.
Let $\rho$ be a smooth defining function, and
define $\overline g = \rho^2 g$, which is a $C^{2,\alpha}$ polyhomogenous Riemannian
metric on $\Mbar$.
For each point $p\in \partial \Mbar$, we can choose smooth functions $\theta^1,\theta^2$ such that
$(\rho,\theta^1,\theta^2)$ form smooth coordinates
in a  neighborhood $\U\subset\Mbar$,
which we call \emph{background coordinates}.
Sometimes for reasons of notational symmetry we also set $\theta^0=\rho$.
Throughout this paper, we will index such background coordinates with indices named
$a,b,c,\dots$,
which we understand to run from $0$ to $2$; and we will use indices $j,k,\dots$,
running from $1$ to $2$, to
refer to coordinates on $\partial \Mbar$.
We will use the Einstein summation convention when convenient.

It is shown in  \cite{GL} that when $g$ is an asymptotically hyperbolic metric that is smoothly
conformally compact,
there is a smooth defining function $\rho$ such that
$|d\rho|_{g}^2/\rho^2\equiv 1$
in a neighborhood of the ideal boundary $\partial \Mbar$, so
the metric can be written in the form
$g=\rho^{-2}\bigl(d\rho^2+h(\rho)\bigr)$ there,
where $h(\rho)$ is a smoothly-varying
family of metrics on $\partial \Mbar$ with $h(0)=\overline{g}\big{|}_{T\partial \Mbar}$.
Unfortunately, that result does not apply in the present circumstances because we are
not assuming that $g$ has a smooth conformal compactification.  As a substitute,
however, we have the following lemma:

\begin{lemma}\label{def-fcn}
If $(M,g)$ is a polyhomogeneous asymptotically hyperbolic Riemannian
geometry, then there exists
a smooth defining function $\rho$
such that
\begin{equation}\label{drho}
\frac{|d\rho|^2_{g}}{\rho^2} = 1+O(\rho^2).
\end{equation}
Also, for each $p\in\partial \Mbar$ there exist smooth background coordinates
$(\rho,\theta^1,\theta^2)$ on
an open neighborhood $\U$ of $p$ in $\overline M$
in which
$g$ can be written in the form
\begin{equation}
\label{grahamlee}
g=\rho^{-2} \Bigl(d\rho^2 +(d\theta^1)^2 + (d\theta^2)^2+ m_{ab}(\rho,\theta)d\theta^a\,d\theta^b\Bigr),
\end{equation}
where the ``error terms'' $m_{ab}$ are uniformly bounded in $\U$
and satisfy
\begin{align}
m_{00}(\rho,\theta) = m_{j0}(\rho,\theta) = m_{0j}(\rho,\theta) & = O(\rho^{2}), &j\in \{1,2\},\label{ma0-est}\\
m_{jk}(\rho,\theta) & = O\bigl(\rho + (\theta^1)^2 + (\theta^2)^2\bigr), & j,k\in\{1,2\}.\label{mjk-est}
\end{align}
Moreover,
$g$ is uniformly equivalent in $\U$ to
the
metric $\rho^{-2} \bigl(d\rho^2 +(d\theta^1)^2 + (d\theta^2)^2\bigr)$.
\end{lemma}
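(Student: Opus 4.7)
The plan is to construct $\rho$ first by a conformal change of an arbitrary initial defining function, and then build background coordinates near each boundary point by a Fermi-like procedure. The reason everything can be kept smooth, even though $\bar g$ is only polyhomogeneous, is that the $C^{2,\alpha}$ hypothesis forces the polyhomogeneous expansion of $\bar g$ to have no log terms below order $\rho^{2+\alpha}$, so the transverse $1$-jet of $\bar g$ at $\partial\Mbar$ is genuinely smooth; every condition we impose on $\rho$ or on the coordinates turns out to be controlled by this smooth $1$-jet.

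For (\ref{drho}), take any smooth defining function $\rho_0$, set $\bar g_0 = \rho_0^2 g$, and write $|d\rho_0|^2_{\bar g_0} = 1 + \rho_0 \phi$; by the preceding observation, $\phi|_{\partial\Mbar}$ is smooth. Seek $\rho = e^\omega \rho_0$ with $\omega \in C^\infty(\Mbar)$; a direct computation gives
\begin{equation*}
|d\rho|^2_{\bar g} \;=\; 1 + \rho_0\bigl(\phi + 2\,\bar g_0^{-1}(d\rho_0, d\omega)\bigr) + O(\rho_0^2).
\end{equation*}
Since $\bar g_0^{-1}(d\rho_0,\cdot)|_{\partial\Mbar}$ is the normal-derivative functional with respect to $\bar g_0$, we may choose smooth $\omega$ with $\omega|_{\partial\Mbar} = 0$ and $\partial_\nu \omega|_{\partial\Mbar} = -\tfrac{1}{2}\phi|_{\partial\Mbar}$; the $O(\rho_0)$ term then vanishes, yielding (\ref{drho}).

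For the coordinates, fix $p\in\partial\Mbar$. Since $\bar g|_{\partial\Mbar}$ is a smooth Riemannian metric on the smooth boundary, pick smooth geodesic normal coordinates $(\tilde\theta^1,\tilde\theta^2)$ for $\bar g|_{\partial\Mbar}$ centered at $p$, so that $\bar g_{jk}(0,\theta) = \delta_{jk} + O(|\theta|^2)$; this immediately gives (\ref{mjk-est}). Extend $\tilde\theta^j$ smoothly into a neighborhood of $p$ in $\Mbar$ by any tubular-neighborhood projection, producing a smooth chart $(\rho,\tilde\theta^1,\tilde\theta^2)$. To achieve $m_{0j} = O(\rho^2)$, modify this chart by
\begin{equation*}
\theta^j \;=\; \tilde\theta^j + \rho\, f_0^j(\tilde\theta) + \rho^2\, f_1^j(\tilde\theta),
\end{equation*}
and observe that the vanishing of $\bar g(\partial_\rho,\partial_{\theta^j})$ at $\rho = 0$ in the new coordinates amounts to a $2\times 2$ linear system for $(f_0^1,f_0^2)$ with coefficient matrix $\bigl(\bar g_{jk}(0,\tilde\theta)\bigr)$, which is smooth and positive definite, hence uniquely solvable with smooth solution; the vanishing of its $\rho$-derivative at $\rho = 0$ reduces, after the $f_0^j$ step, to an analogous linear system for $(f_1^1,f_1^2)$ whose right-hand side involves only the smooth $1$-jet of $\bar g$ transverse to $\partial\Mbar$. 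Both $f_0^j$ and $f_1^j$ are therefore smooth, and the resulting chart $(\rho,\theta^1,\theta^2)$ is smooth.

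With $m_{0j} = O(\rho^2)$ established, a cofactor expansion of the metric matrix gives $\bar g_{00}\bar g^{00} = 1 + O(\rho^4)$; combined with (\ref{drho}), this forces $\bar g_{00} = 1 + O(\rho^2)$, completing (\ref{ma0-est}). The uniform equivalence to the model metric on a suitably shrunk $\U$ is immediate from the continuity of the $m_{ab}$ and their vanishing at $p$. The main obstacle I foresee is the bookkeeping for the $f_1^j$ adjustment: writing $\partial_\rho \bar g_{0j}|_{\partial\Mbar}$ after the first coordinate change and verifying that the resulting equation for $f_1^j$ involves only the smooth $1$-jet of $\bar g$---not any higher-order terms that could carry logarithms. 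That this is automatic is essentially what the $C^{2,\alpha}$ polyhomogeneity assumption is buying us.
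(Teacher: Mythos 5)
Your proposal is correct, but it takes a genuinely different route from the paper's. The paper first replaces $\overline g_0=\rho_0^2g$ by a globally smooth metric $\overline g_1$ with $\overline g_0=\overline g_1+O(\rho^2)$ (this is exactly what the polyhomogeneous $C^2$ hypothesis buys), then applies the construction of \cite{GL} to the smoothly conformally compact metric $g_1=\rho_0^{-2}\overline g_1$ to get a smooth defining function with $|d\rho|^2_{g_1}/\rho^2\equiv 1$ exactly near the boundary, and finally extends boundary normal coordinates along the integral curves of $\grad_{\overline g_1}\rho$, so that for $\overline g_1$ the components $m_{00}$, $m_{0j}$ vanish identically and all of \eqref{ma0-est} comes from the single estimate $\overline g_0-\overline g_1=O(\rho^2)$. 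You instead work directly with $\overline g_0$ and remove the low-order errors by hand: the conformal factor $e^\omega$ with prescribed boundary $1$-jet kills the order-$\rho$ term of $|d\rho_0|^2_{\overline g_0}$; the shear $\theta^j=\tilde\theta^j+\rho f_0^j+\rho^2 f_1^j$ kills the value and first $\rho$-derivative of $\overline g_{0j}$ at $\partial\Mbar$ (your two linear systems do have invertible smooth coefficient matrices, essentially $\overline g_{jk}(0,\cdot)$ and $-2\overline g_{jk}(0,\cdot)$, and smooth right-hand sides, since only the transverse $1$-jet of $\overline g$ enters); and the cofactor identity $\overline g_{00}\overline g^{00}=1+O(\rho^4)$ combined with \eqref{drho} recovers $m_{00}=O(\rho^2)$ --- a neat step the paper does not need, since there $m_{00}$ vanishes identically for $\overline g_1$. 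Your approach is more elementary in that it avoids the exact eikonal-type normalization of \cite{GL}, at the price of more bookkeeping; the paper's is shorter because every error is concentrated in the one difference $\overline g_0-\overline g_1$. In a full write-up you should make explicit the two places where you pass from vanishing of a boundary value (and, for $\overline g_{0j}$, of its first normal derivative) to the quantitative bounds $O(\rho)$ and $O(\rho^2)$: these upgrades are valid only because the relevant quantities are $C^1$, respectively $C^2$, up to $\partial\Mbar$ with uniform bounds on the compact closure, which is precisely the Taylor-remainder consequence of the polyhomogeneous $C^{2,\alpha}$ hypothesis you identify at the outset.
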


\begin{proof}
Let $\rho_0$ be any smooth defining function for $\Mbar$, and
write $\overline g_0 = \rho_0^2 g$.
The hypothesis
implies that there is a smooth
metric $\overline g_1$ on $\Mbar$ such that $\overline g_0 =  \overline g_1 + O(\rho^2)$.
(Just take $\overline g_1$ locally to be equal to the leading smooth terms in an asymptotic
expansion for $\overline g_0$, and then patch together with a partition of unity.)
Let $g_1 = \rho_0^{-2}\overline g_1$.
Because $g_1$ is asymptotically hyperbolic and
smoothly conformally compact, the argument of \cite{GL} shows that
there is a smooth defining function $\rho$ such that
$|d\rho|^2_{ g_1} /\rho^{2}\equiv 1$.
It follows that $|d\rho|^2_{g}/\rho^2 = 1+O(\rho^2)$.
Let $\widehat g=\iota^*\overline g_0= \iota^*\overline g_1$ be the
metric induced on $\partial\Mbar$ by inclusion.  

Given $p\in \partial\Mbar$,
let $(\theta^1,\theta^2)$ be
Riemannian normal coordinates for
$\widehat g$
on some neighborhood of $p$ in $\partial\Mbar$.
Extend $(\theta^1,\theta^2)$ to a neighborhood of $p$ in $\Mbar$
by declaring them to be constant
along the integral curves of the smooth vector field
$\grad_{\overline g_1}\rho$.
It follows that $(\rho, \theta^1,\theta^2)$ are smooth coordinates in a neighborhood $\U$ of
$p$, in which $\overline g_1$ has an expression of the form
\begin{equation*}
\overline g_1=d\rho^2 +(d\theta^1)^2 + (d\theta^2)^2+ m_{ab}(\rho,\theta)d\theta^a\,d\theta^b,
\end{equation*}
with $m_{00}$, $m_{j0}$, and $m_{0j}$ identically zero, and $m_{jk}$ satisfying
\eqref{mjk-est}.
Because $g=\rho^{-2}\bigl(\overline g_1+O(\rho^2)\bigr)$, this implies that $g$ has the
expansion claimed in the statement of the lemma.
Since $m_{ab}(0,0)=0$, by shrinking $\U$ we may also
ensure that the coefficients $m_{ab}$ are uniformly small in $\U$, and thus
$g$ is uniformly equivalent to
$\rho^{-2} \bigl(d\rho^2 +(d\theta^1)^2 + (d\theta^2)^2\bigr)$ there.
\end{proof}

From now on, we  assume $\rho$ is a smooth defining function
satisfying \eqref{drho}.
We now argue that we can choose $\rho$ so that it also satisfies 
\begin{equation}\label{delta-rho}
\Delta_g \rho \le0 \text{\ \ on\ \ } M.
\end{equation}
On any asymptotically hyperbolic $3$-manifold, an easy computation 
(see, for example, 
\cite[p. 199]{GL})
shows that 
\begin{equation*}
\frac{\Delta_g \rho}{\rho}\to -1 \ \ \text{as}\ \ \rho\to 0.
\end{equation*}
Thus, there is some $\delta>0$ such that $\Delta_g\rho \le 0$ 
on the set where 
$\rho<\delta$. Let $\sigma\colon [0,+\infty)\to \left[0,\frac{3}{4}\delta\right]$ be any smooth,
increasing, concave-down function for which
$$\sigma (x)=x \ \ \text{if}\ \ x\le \delta/2, \ \ \ \sigma(x)=\frac{3}{4}\delta \ \ \text{if}\ \ x\ge \delta.$$
Define $\tilde{\rho}:=\sigma\circ \rho$. Note that the conclusions of the previous lemma still hold if the function $\rho$ 
is replaced by $\tilde{\rho}$. Furthermore, we compute
$$\Delta_g \tilde{\rho}=\divergence_g \bigl((\sigma'\circ\rho)d\rho\bigr)=\left(\sigma'\circ\rho\right)\Delta_g\rho+\left(\sigma''\circ \rho\right) |d\rho|_g^2\le 0,$$
where the last inequality follows from the facts that $\sigma'\ge 0$, $\sigma''\le 0$, and
$\Delta_g\rho<0$ on the support of $\sigma'\circ\rho$. 
From now on, we replace $\rho$ by $\tilde\rho$, and assume that \eqref{delta-rho} holds.

We  call any coordinates $(\rho,\theta^1,\theta^2)$
that satisfy the conclusions of the previous lemma and \eqref{delta-rho}
\emph{preferred background coordinates centered at $p$}.

\subsection{Splicing the manifolds and the metrics}
We now focus on the topological aspect of our gluing construction. Let $p_1, p_2\in \partial \Mbar$
be two distinct points on the ideal boundary,  and for $i=1,2$ let
$\vec\theta_i = (\rho,\theta^1_{i},\theta^2_{i}) = (\theta^0_i,\theta^1_i,\theta^2_i)$
be preferred background coordinates on a neighborhood $\U_i\subset\Mbar$ centered at $p_i$.
There is a positive constant $c$ such that these preferred coordinates
are defined and \eqref{grahamlee}--\eqref{mjk-est} hold for
$|\vec{\theta}_i|\le c$; after multiplying
$\rho$ and each of the coordinate functions $\theta^j_i$
by $1/c$ (which does not affect \eqref{drho}, \eqref{grahamlee} or \eqref{delta-rho}), we may assume that
these two preferred coordinate charts are defined for $|\vec{\theta}_i|\le 1$.

We now let $\e$ be a small positive parameter, and
consider  two
``semi-annular" regions $\overline A_{\e,1},\overline A_{\e,2}\subset \Mbar$ characterized by
\begin{equation*}
\overline A_{\e,i} := \left\{ \vec{\theta}_{i}\in \U_i:
\e^2<|\vec{\theta}_i|< 1\right\}.
\end{equation*}
We let $A_{\e,i} = \overline A_{\e,i}\cap M$.
For each choice of $\e$, the two regions can be identified using an inversion map with respect to a circle of radius $\e$,
given explicitly in coordinates by
$\vec\theta_2 = I_\e (\vec\theta_1)$, where $I_\e\colon \overline A_{\e,1}\to \overline A_{\e,2}$ is
the following diffeomorphism:
\begin{equation}\label{inv}
I_\e (\vec{\theta}_1)=\frac{\e^2}{|\vec{\theta}_1|^2}\vec{\theta}_1.
\end{equation}
Based on this map, we define an equivalence relation  on $\Mbar$
by saying $\vec\theta_1\sim\vec\theta_2$ when $\vec\theta_1\in \overline A_{\e,1}$, $\vec\theta_2\in \overline A_{\e,2}$,
and $\vec\theta_2 = I_\e(\vec\theta_1)$.
This produces the connected sum manifold $\Mbar_\e$, defined as follows:

\begin{definition}\label{def-Me}
For $i=1,2$ and $a>0$, let $\Bbar_{a,i}$ be the closed subset of $\Mbar$ that corresponds in coordinates to the ball $|\vec{\theta}_i|\le a$. We define $\overline \X_\e\subset\Mbar$ to be the open subset
\begin{equation*}
\overline \X_\e  := \Mbar\smallsetminus
\bigl(\Bbar_{\e^2,1}\cup \Bbar_{\e^2,2}\bigr),
\end{equation*}
and define the spliced manifold
$\Mbar_\e$ by
\begin{equation*}
\Mbar_\e:=\overline \X_\e/{\sim}.
\end{equation*}
We let $\X_\e=\overline \X_\e\cap M$, and let $M_\e$ denote the
subset of $\Mbar_\e$ consisting of points whose representatives are in $\X_\e$.
Let  $\pi_\e\colon \overline \X_\e\to \Mbar_\e$ be the natural quotient map, and define the \emph{neck} of $\Mbar_\e$ to be the
open subset
\begin{equation*}
\overline\n_\e:=\pi_\e \left( \overline A_{\e,1} \right) = \pi_\e \left( \overline A_{\e,2} \right).
\end{equation*}
We let $\n_\e$ denote $\overline\n_\e\cap M_\e$.
\end{definition}

We will parametrize the neck by an expanding family of half-annuli
in the upper half-space.  Let $\Hbar^3 $ denote the closed upper half-space, defined by
\begin{equation*}
\Hbar^3  := \left\{ (y,x^1,x^2)\in \R^3 :  y\ge 0\right\},
\end{equation*}
and let $\h^3 \subset\Hbar^3 $ be the subset where $y>0$.
Let $r  = \bigl( y^2 + (x^1)^2+ (x^2)^2\bigr){}^{1/2}$, and
define $\overline\A_\e \subset \Hbar^3 $ to be the half-annulus
defined by
\begin{equation*}
\overline\A_\e := \left\{ (y,x^1,x^2)\in \Hbar^3 :  \e <r< \frac{1}{\e}\right\},
\end{equation*}
and let $\A_\e = \overline \A_\e \cap \h^3 $.
Analogously to the case of background coordinates, on $\h^3 $ we  use the notations
$x^0=y$ and  $\vec x = (y,x) = (y,x^1,x^2) = (x^0,x^1,x^2)$.

To define the parametrization of the neck,
we first define diffeomorphisms $\alpha_{\e,i}\colon \A_\e \to A_{\e,i}$ for
$i=1,2$ by $\vec\theta_i = \alpha_{\e,i}(y,x) $, where
\begin{align*}
\alpha_{\e,i}(y,x) &= (\e y,\e x).
\end{align*}
Then we define $\beta_\e\colon \A_\e \to A_{\e,2}$ by
\begin{equation*}
\beta_\e  = I_\e \circ \alpha_{\e,1} = \alpha_{\e,2}\circ I,
\end{equation*}
where $I\colon \A_\e\to \A_\e$ is the inversion in the unit circle:
$I(y,x) = (y/r^2,x/r^2)$.
Our preferred parametrization of $\n_\e$ is
\begin{equation*}
\Psi_\e:= \pi_\e \circ \alpha_{\e,1} = \pi_\e\circ\beta_\e = \pi_\e \circ \alpha_{\e,2}\circ I
\colon \A_\e \to \n_\e.
\end{equation*}
The various diffeomorphisms are summarized in the following commutative diagram:
\begin{equation*}
\begin{diagram}[width=.25in]
\A_\e && \rTo^I && \A_\e\\
\dTo<{\alpha_{\e,1}} & \rdTo(4,2)^{\beta_\e} &&& \dTo>{\alpha_{\e,2}}\\
A_{\e,1} && \rTo^{I_\e} && A_{\e,2}\\
& \rdTo & \pi_\e & \ldTo \\
&& \n_\e.
\end{diagram}
\end{equation*}

The topology of $M_\e$ does not change with (sufficiently small) $\e$. The Riemannian geometry on $M_\e$ (which we define next) does depend on $\e$; this is one of  the reasons that we keep track of the parameter $\e$.

To obtain a suitable Riemannian metric $g_\e$ on $M_\e$, we blend the metrics coming from the original annuli
with the use of a cutoff function.

\begin{lemma}\label{cutoff1}
There exists a nonnegative and monotonically increasing smooth cutoff
function $\phi\colon \R\to\R$ that is identically $1$ on $[2,\infty)$,
is supported in $\bigl(\frac{1}{2},\infty\bigr)$, and
satisfies the condition
\begin{equation}\label{phi-eqn}
\phi(r)+\phi \left({\frac{1}{r}}\right)\equiv 1.
\end{equation}
\end{lemma}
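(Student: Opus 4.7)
The plan is to reduce the multiplicative symmetry $r \leftrightarrow 1/r$ to the additive symmetry $t \leftrightarrow -t$ via the substitution $t=\log_2 r$, which is much easier to handle, and then to use a standard symmetrization trick to obtain the required functional equation.

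First I would construct an auxiliary smooth, nondecreasing function $\eta\colon\R\to[0,1]$ that is identically $0$ on $(-\infty,-1]$, identically $1$ on $[1,\infty)$, and satisfies $\eta(t)+\eta(-t)\equiv 1$. To build $\eta$, I start with any smooth, nondecreasing function $\chi\colon\R\to[0,1]$ that equals $0$ on $(-\infty,-1]$ and $1$ on $[1,\infty)$ (which exists by a standard bump-function construction), and then symmetrize by setting
\begin{equation*}
\eta(t) := \tfrac{1}{2}\chi(t) + \tfrac{1}{2}\bigl(1-\chi(-t)\bigr).
\end{equation*}
A direct computation gives $\eta(t)+\eta(-t)=1$. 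The derivative $\eta'(t)=\tfrac{1}{2}\chi'(t)+\tfrac{1}{2}\chi'(-t)$ is nonnegative, so $\eta$ is nondecreasing; and the boundary conditions on $\chi$ ensure $\eta=0$ on $(-\infty,-1]$ and $\eta=1$ on $[1,\infty)$.

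Next I would define
\begin{equation*}
\phi(r) := \begin{cases} \eta(\log_2 r) & \text{if } r > 0, \\ 0 & \text{if } r \le 0. \end{cases}
\end{equation*}
Because $\eta$ vanishes to all orders on $(-\infty,-1]$, the function $\eta(\log_2 r)$ vanishes to all orders as $r\searrow 0$, so $\phi$ extends smoothly by zero through $r=0$ and onto $(-\infty,0]$. For $r\in(0,\tfrac{1}{2}]$ we have $\log_2 r\le -1$, so $\phi(r)=0$; hence $\supp\phi\subset(\tfrac{1}{2},\infty)$. For $r\ge 2$, $\log_2 r\ge 1$, so $\phi(r)=1$. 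Monotonicity of $\phi$ on $(0,\infty)$ follows from composing the nondecreasing functions $\eta$ and $\log_2$, and $\phi$ is identically zero on $(-\infty,\tfrac{1}{2}]$, so $\phi$ is nondecreasing on all of $\R$.

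Finally, to verify the functional equation, observe that for $r>0$,
\begin{equation*}
\phi(r) + \phi\!\left(\tfrac{1}{r}\right) = \eta(\log_2 r) + \eta(-\log_2 r) = 1,
\end{equation*}
using the symmetry of $\eta$. (The equation is naturally imposed only on $r>0$, which is all that is needed since $r=\sqrt{y^2+(x^1)^2+(x^2)^2}$ is nonnegative in the later application.) The only nontrivial aspect of the construction is reconciling the prescribed boundary behavior with the functional equation at smooth points; the change of variable $t=\log_2 r$ converts that condition into the simple odd-symmetry relation $\eta(t)+\eta(-t)=1$, which the symmetrization automatically provides.
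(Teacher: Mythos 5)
Your proof is correct and is essentially the paper's argument: the paper symmetrizes a generic cutoff $\phi_0$ directly in the multiplicative variable, setting $\phi(r)=\tfrac12-\phi_0(r)+\phi_0\bigl(\tfrac1r\bigr)$, which is exactly your symmetrization $\eta(t)=\tfrac12\chi(t)+\tfrac12\bigl(1-\chi(-t)\bigr)$ rewritten in the variable $t=\log_2 r$. The logarithmic change of variables is only a cosmetic repackaging of the same symmetrization trick, and your handling of the support and of the restriction of \eqref{phi-eqn} to $r>0$ matches the (equally informal) treatment in the paper.
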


\begin{proof}
Let $\phi_0$
be a nonnegative and decreasing
smooth cutoff function  such that $\phi_0(r)=\frac{1}{2}$ for $r\le\frac{1}{2}$ and $\phi_0(r)=0$ for $r\ge2$, and set
\begin{equation*}
\phi(r):=\frac{1}{2}-\phi_0(r)+\phi_0\left({\frac{1}{r}}\right).
\end{equation*}
An easy computation shows that $\phi$ satisfies the conclusions of the lemma.
\end{proof}

Using this cutoff function and the maps $\alpha_{\e,1}$, $\beta_\e$, and $\Psi_\e$  defined above,
we define the metric on $M_\e$ as follows.

\begin{definition}\label{def-ge}
We define $g_\e$ to be the metric on $M_\e$ that agrees with $(\pi_\e)_*g$ away from the neck $\n_\e$, while on $\n_\e$ it satisfies
\begin{equation}\label{def-g}
\Psi_\e^*g_\e =\p(r) (\alpha_{\e,1})^*g +
\p\biggl(\frac{1}{r}\biggr)
(\beta_\e)^* g.
\end{equation}
\end{definition}

Note the following:
\begin{itemize}
\item
On the set where $r\ge 2$,  $\Psi_{\e}^*g_\e$ agrees with $\alpha_{\e,1}^*g$;
\item
On the set where $r\le \tfrac 1 2$, $\Psi_{\e}^*g_\e$ agrees with  $\beta_{\e}^*g$.
\end{itemize}
It is obvious from the definition that $g_\e$ is polyhomogeneous and
conformally compact  of class $C^2$.

\subsection {Splicing the defining functions}\label{defn-func-glue}
Next we construct a family of defining functions for the manifolds $M_\e$ that are
specially adapted to the metrics $g_\e$, and that agree with the original defining function $\rho$ away from $\n_\e$.
To define them, we need the following auxiliary function.

\begin{lemma}\label{lemma:F}
There exist a constant $\b\ge 2$
and a $C^\infty$ function $F\colon(0,\infty)\to (0,\infty)$ that satisfies
\begin{align}
\label{F}
F\left({\frac{1}{r}}\right)&=r^2F(r), && r\in (0,\infty),\\
\label{F=1}
F(r)&=1, &&r\ge \b,\\
\label{F(1/r)}
F(r)&=\frac{1}{r^2}, &&r\le 1/\b,\\
\label{DeltaF}
\frac{|3rF'(r)+r^2F''(r)|}{F(r)}&\le \frac{1}{2}, &&r\in(0,+\infty).
\end{align}
\end{lemma}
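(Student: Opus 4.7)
The plan is to exploit the $r\mapsto 1/r$ symmetry by passing to the logarithmic variable $s=\log r$ and defining $J(s):=rF(r)$. Under this substitution the functional equation \eqref{F} is equivalent to $J$ being an even function of $s$, and a direct chain-rule computation (using $d/dr=e^{-s}\,d/ds$) gives
\begin{equation*}
rF'(r)=e^{-s}(J'(s)-J(s)),\qquad r^{2}F''(r)=e^{-s}(J''(s)-3J'(s)+2J(s)),
\end{equation*}
so that
\begin{equation*}
\frac{3rF'(r)+r^{2}F''(r)}{F(r)}=\frac{J''(s)}{J(s)}-1.
\end{equation*}
Hence the bound \eqref{DeltaF} is equivalent to $J''/J\in[1/2,3/2]$, while \eqref{F=1} and \eqref{F(1/r)} together translate (using evenness) to the single condition $J(s)=e^{|s|}$ for $|s|\ge\log\b$. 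The problem thus reduces to producing a smooth, even, positive function $J\colon\R\to(0,\infty)$ that agrees with $e^{|s|}$ outside $[-\log\b,\log\b]$ and satisfies $J''/J\in[1/2,3/2]$.

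The key observation is that both model functions $e^{|s|}$ (on the outer region) and $2\cosh(s)=e^{s}+e^{-s}$ (everywhere) satisfy $J''=J$ exactly, so both produce ratio $1$. I would construct $J$ by smoothly interpolating between them. Set $s_{0}:=\log\b$, pick a smooth cutoff $\chi\colon\R\to[0,1]$ equal to $1$ on $(-\infty,s_{0}-1]$ and to $0$ on $[s_{0},\infty)$, and for $s\ge 0$ define
\begin{equation*}
J(s):=e^{s}+\chi(s)\,e^{-s},
\end{equation*}
extending to $s<0$ by $J(-s)=J(s)$. On $|s|\le s_{0}-1$ this gives $J(s)=2\cosh(s)$, which is automatically smooth and even at the origin (all odd derivatives vanish); on $|s|\ge s_{0}$ it gives $J(s)=e^{|s|}$; and the two pieces join smoothly by construction of $\chi$.

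It remains to verify $|J''/J-1|\le 1/2$ in the transition region $s\in[s_{0}-1,s_{0}]$. A short calculation yields $J''-J=(\chi''(s)-2\chi'(s))\,e^{-s}$, and since $J(s)\ge e^{s}$ there,
\begin{equation*}
\left|\frac{J''(s)}{J(s)}-1\right|\le(\|\chi''\|_{\infty}+2\|\chi'\|_{\infty})\,e^{-2s}\le\frac{C}{\b^{2}},
\end{equation*}
where $C$ depends only on the fixed cutoff $\chi$. Choosing $\b\ge\max(2,\sqrt{2C})$ makes this at most $1/2$, and then setting $F(r):=J(\log r)/r$ produces the desired function. The only real obstacle is the dictionary between the $F$- and $J$-pictures; once the change of variables is in place, the construction is elementary and the derivative bound is a small-perturbation argument exploiting the freedom to enlarge $\b$.
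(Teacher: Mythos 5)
Your construction works and is, at bottom, the paper's own proof transported to logarithmic coordinates. The paper interpolates in the $r$-variable between the two solutions $1$ and $r^{-2}$ of $3rG'+r^2G''=0$ via the manifestly symmetric formula $F(r)=1+r^{-2}-\psi(r)-r^{-2}\psi(1/r)$ with $\psi(r)=\psi_0(\b r)$, using the scale invariance of the operator $3r\partial_r+r^2\partial_r^2$ to get a $\b$-independent bound on the cutoff terms and the factor $r^{\pm2}\le (2/\b)^2$ to make them small. Your substitution $J(s)=rF(r)$, $s=\log r$, turns that operator into $\partial_s^2-1$ (your identity $(3rF'+r^2F'')/F=J''/J-1$ is correct), the kernel elements $1,r^{-2}$ into $e^{\pm s}$, the paper's interpolant $1+r^{-2}$ into your $2\cosh s$, and scale invariance into translation invariance; indeed the paper's $F$ is exactly of your form $J(s)=e^s+\chi(s)e^{-s}$ for $s\ge0$ with $\chi(s)=1-\psi(e^{-s})$. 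So the mechanism is the same; what your version buys is transparency about why the operator in \eqref{DeltaF} is the natural one, while the paper's single global formula buys automatic symmetry, avoiding the reflection step.

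Two points need patching, both harmless because you are free to enlarge $\b$. First, as written your cutoff $\chi$ is defined in terms of $s_0=\log\b$ while $\b$ is chosen afterwards from $C=\|\chi''\|_\infty+2\|\chi'\|_\infty$; to avoid this circularity take $\chi(s)=\chi_0(s-s_0)$ for a fixed profile $\chi_0$, so that $C$ is genuinely $\b$-independent (this is the exact analogue of the paper's rescaling trick), and note that on the transition region $e^{-2s}\le e^{2}/\b^2$, so the correct bound is $Ce^{2}/\b^{2}$ rather than $C/\b^{2}$. Second, your even reflection is smooth at $s=0$ only because $J=2\cosh s$ near the origin, which requires the transition interval $[s_0-1,s_0]$ to lie in $\{s>0\}$, i.e.\ $\b\ge e$; your stated choice $\b\ge\max(2,\sqrt{2C})$ does not visibly guarantee this (if it failed, $J'(0^+)=1+\chi'(0)-\chi(0)$ need not vanish and the reflected $J$ would not even be $C^1$). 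Simply add $\b\ge 3$ (say) to the choice of $\b$ and the argument is complete.
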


\begin{proof}
We use the function 
$1+1/r^2$ to interpolate between $1/r^2$ for small $r$ and 
the constant function $1$ for large $r$.
More specifically, let $\psi_0$ be any smooth cut-off function such that
$$\psi_0(r)= 1\text{ for }r\le 1,
\quad
0\le \psi_0(r)\le 1\text{ for } 1\le r \le 2,
\quad
\psi_0(r)=0 \text{ for } r\ge 2,
$$
let 
$$C = \sup _{r\in (0,\infty)}|3r\psi_0'(r)+r^2\psi_0''(r)|,$$
and choose $\b\ge 2$ large enough that 
\begin{equation}\label{eq:b^2}
\left(\frac{2}{\b}\right)^2\le\frac{1}{2C}.
\end{equation}
The function $\psi(r):=\psi_0(\b r)$ 
satisfies $\psi(r)=1$ for $r\le 1/\b$, $\psi(r)=0$ for $r\ge 2/\b$ and
\begin{equation}\label{psi:estimate}
|3r\psi'(r)+r^2\psi''(r)|\le C\ \ \text{for all}\ r.
\end{equation}
Define
$$F(r):=1+\frac{1}{r^2}-\psi(r)-\frac{1}{r^2}\psi\left(\frac{1}{r}\right)$$
and compute
$$F\left({\frac{1}{r}}\right)=1+r^2-\psi\left(\frac{1}{r}\right)-r^2\psi(r)=
r^2F(r).$$
One readily verifies \eqref{F=1}, \eqref{F(1/r)} and 
\begin{equation}\label{eq:F-cases}
 F(r)=
\begin{cases}
1+r^{-2}-\psi(r) & \text{if\ } r\in\left[1/\b, 2/\b\right]\\
1+r^{-2} &\text{if\ } r\in\left[2/\b, \b/2\right]\\
1+r^{-2}-\tfrac{1}{r^2}\psi\left(\tfrac{1}{r}\right) &\text{if\ } r\in\left[\b/2,\b\right].
\end{cases}
\end{equation}
Note that 
\begin{equation}\label{lowerbounds:F}
F(r)\ge \frac{1}{r^2} \text{\ \ \ if\ \ \ }r\le 1,\ \ \  F(r)\ge 1 \text{\ \ \ if\ \ \ }r\ge 1.
\end{equation}
Furthermore, we see that $3rF'(r)+r^2F''(r)$  is nonzero 
only on the intervals $[1/\b, 2/\b]$ and $[\b/2,\b]$. A straightforward computation using 
\eqref{eq:F-cases}
and \eqref{lowerbounds:F} shows
$$\frac{|3rF'(r)+r^2F''(r)|}{F(r)}\le
\begin{cases}
r^2|3r\psi'(r)+r^2\psi''(r)| & \text{if\ } r\in[1/\b, 2/\b]\\
\quad &\\ 
r^{-2}\left|3\frac{1}{r}\psi'\left(1/r\right)+\frac{1}{r^2}\psi''\left(1/r\right)\right| & \text{if\ } r\in[\b/2,\b].
\end{cases}$$
Property \eqref{DeltaF} is now immediate from  \eqref{eq:b^2} and  \eqref{psi:estimate}.
\end{proof}




Note that the function $F$ of the preceding lemma is bounded below by $1$,
and satisfies an estimate of the form
\begin{equation}\label{F'/F}
\frac{|F'(r)|}{F(r)} \le \frac{C}{r}.
\end{equation}

We now let $\xi\colon \h^3 \to \R$ be the positive function defined by
\begin{equation*}
\xi(y,x) = y F(r),
\end{equation*}
where $F$ is the function of the preceding lemma and $r=|\vec x|$.
A computation using \eqref{F} shows that $\xi \circ I= \xi$, and $\e\xi$ agrees with
$\e y = (\alpha_{\e,1})^*\rho$ where
$r\ge 2$
and with $\e y\circ I = (\beta_\e)^*\rho$ where
$r\le 1/2$.
Define
$\rho_\e\colon M_\e\to \R$ to be the defining function
that is equal to $\rho$ away from the neck,
and on the neck satisfies
\begin{equation*}
\Psi_\e^*\rho_\e = \e\xi = \e y F.
\end{equation*}

We wish to show that $g_\e$ is asymptotically hyperbolic.
To do so, we first need to
find an expression for $g_\e$ along the lines of \eqref{grahamlee}.
Recall that $\ghyp:=y^{-2}\bigl(dy^2+(dx^{1})^2+(dx^{2})^2\bigr)$
is the metric of the upper half-space model of hyperbolic space.
The inversion $I(y,x) = (y/r^2, x/r^2)$ is an isometry for this model:
$I^* \ghyp = \ghyp$.
For $i=1,2$, we can write
\begin{equation}\label{alpha*-g}
(\alpha_{\e,i})^*g
= \ghyp + m_{ab,i}(\e y,\e x)\frac{dx^a}{y}\,\frac{dx^b}{y},
\end{equation}
where $m_{ab,i}$ are the error terms from the expression \eqref{grahamlee}
for $g$ in preferred background coordinates centered at $p_i$.
This metric is uniformly equivalent on $\A_\e$ to $\ghyp$; and because
$m_{ab,i}\in C^{2,\alpha}(\U\cap \Mbar)$ and
$m_{ab,i}(0,0)=0$, it is immediate that as $\e\to 0$, the $C^{2,\alpha}$ norm of the functions $m_{ab,i}(\e y,\e x)$
is $O(\e)$ on any subset of $\A_\e$ where $r$ is bounded above,
including in particular the set where $\phi\ne 1$.
(The $C^{2,\alpha}$ norm in use here is the ordinary Euclidean one inherited from $\R^3$.)

To analyze $(\beta_\e)^*g= I^*(\alpha_{\e,2})^*g$, we compute,
for $a=0,1,2$,
\begin{equation*}
I^* \left( \frac{dx^a}{y} \right)
= \sum_c Q^{ac}(y,x)\, \frac{dx^c}{y},
\quad
\text{where }
Q^{ac}(y,x) = \left(\frac{r^2\delta^{ac} - 2 x^a x^c}{r^2} \right),
\end{equation*}
and therefore
\begin{equation}\label{eq:beta*-g}
(\beta_{\e})^*g
= I^*(\alpha_{\e,2})^*g
=  \ghyp + \sum_{a,b,c,d} m_{ab,2}\left(\frac{\e y}{r^2},\frac{\e x}{r^2}\right)Q^{ac}(y,x)Q^{bd}(y,x)\frac{dx^c}{y}\,\frac{dx^d}{y}.
\end{equation}
Because $Q^{ac}$ is a rational function
with nonvanishing denominator, and
is homogeneous of degree zero, it is
uniformly bounded on $\A_\e$, and thus $(\beta_\e)^*g$ is uniformly equivalent on $\A_\e$
to $\ghyp$.  Moreover, all of the derivatives of $Q^{ac}$
are uniformly bounded on any subset of $\A_\e$ where $r$ is bounded below by a positive constant,
such as the set where $\phi\ne 0$.
Also, on any such subset, an easy argument
shows that the $C^{2,\alpha}$-norm  of
$m_{ab,2}(\e y/r^2,\e x/r^2)$ is $O(\e)$ as
$\e\to 0$.
Combining these observations with
formula \eqref{alpha*-g} for $(\alpha_{\e,2})^*g$, we conclude that
the pullback of $g_\e$ to $\A_\e$ has the form
\begin{equation}\label{metric}
(\Psi_\e)^* g_\e
= \ghyp + k_{ab,\e}(y,x)\frac{dx^a}{y}\,\frac{dx^b}{y},
\end{equation}
where for each $\e$, the function $k_{ab,\e}$ is bounded and in $C^{2,\alpha}(\A_\e)$,
$(\Psi_\e)^* g_\e$ is uniformly equivalent to $\ghyp$ independently of $\e$,
and
\begin{equation}\label{est:kab}
\|k_{ab,\e}\|_{C^{2,\alpha}(\A_c)}=O(\e)
\end{equation}
as $\e \to 0$ for any fixed $c\in(0,1)$.

To obtain estimates for the behavior of $k_{ab,\e}$ at the ideal boundary analogous to
\eqref{ma0-est} and \eqref{mjk-est},
we need to explicitly expand the various terms in \eqref{eq:beta*-g}.
Note that in addition to
the functions $Q^{ab}$ being uniformly bounded, we have
\begin{equation*}
Q^{j0}(y,x)=Q^{0j}(y,x) = \frac{2 y x^j }{r^2} = O\left(\frac{y}{r}\right).
\end{equation*}
Therefore, using \eqref{ma0-est}--\eqref{mjk-est} for $m_{ab,i}$, we have
\begin{equation}\label{k00:computation}
\begin{aligned}
k_{00,\e}(y,x)
&= \p(r)  m_{00,1}(\e y,\e x) + \p\Bigl(\frac{1}{r}\Bigr)\Bigl(
m_{jk,2}\left(\frac{\e y}{r^2},\frac{\e x}{r^2}\right)Q^{0j}(y,x)Q^{0k}(y,x)\\
&\quad + 2 m_{j0,2}\left(\frac{\e y}{r^2},\frac{\e x}{r^2}\right)Q^{0j}(y,x)Q^{00}(y,x)
+ m_{00,2}\left(\frac{\e y}{r^2},\frac{\e x}{r^2}\right)Q^{00}(y,x)Q^{00}(y,x)\Bigr)\\
&= \p(r) O(\e^2 y^2) + \p\Bigl(\frac{1}{r}\Bigr) O\left(\frac{\e y^2}{r^4}\right),
\end{aligned}
\end{equation}
where the implied constants on the right are
uniform in $\e$ on all of $\A_\e$.
Note that, in view of the definition of $\rho_{\e}$, the right-hand side of this equation is
$O(\rho_\e)$, uniformly in $\e$.
Applying similar computations to the other terms, and using the
notation $g_{ab,\e}$ to denote the components of $\Psi_\e^*g_\e$ in standard
coordinates on $\A_\e$,
we conclude that
\begin{align}
\label{g_00}
g_{00,\e}(y,x) &= y^{-2} (1 + O(\rho_\e));\\
\label{g_0j}
g_{0j,\e}(y,x) &= y^{-2} O(\rho_\e), &j&\in \{1,2\};\\
\label{g_jk}
g_{jk,\e}(y,x) &= y^{-2} (\delta_{jk} + O(\e)), &j,k&\in \{1,2\}.
\end{align}
It follows that the inverse matrix satisfies
\begin{align}
\label{g^00}
g_\e^{00}(y,x) &= y^2(1+O(\rho_\e));\\
\label{g^0j}
g_\e^{0j}(y,x) &= y^2 O(\rho_\e), &j&\in \{1,2\};\\
\label{g^jk}
g_\e^{jk}(y,x) &= y^2 (\delta^{jk} + O(\e)), &j,k&\in \{1,2\}.
\end{align}

\begin{lemma}\label{defnfnc}
There exists a constant $C>0$ independent of $\e$ such that  sufficiently close to the ideal boundary $\partial \Mbar_\e$ of $M_\e$ we have
\begin{equation*}
\left|\frac{|d\rho_\e|_{g_\e}^2}{\rho_{\e}^{2}}-1\right|\le \frac {C}{\e}\rho_{\e}.
\end{equation*}
\end{lemma}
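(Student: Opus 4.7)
On $M_\e\setminus\n_\e$ we have $g_\e=g$ and $\rho_\e=\rho$, so estimate \eqref{drho} already gives the much stronger bound $\bigl||d\rho_\e|_{g_\e}^2/\rho_\e^2-1\bigr|=O(\rho_\e^2)$. The whole content of the lemma therefore lies inside the neck, and I work in the parametrization $\Psi_\e\colon\A_\e\to\n_\e$, in which $\Psi_\e^*\rho_\e=\e yF(r)$ and the inverse of $\Psi_\e^*g_\e$ is described by \eqref{g^00}--\eqref{g^jk}.

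A direct computation gives $d(\e yF)=\e h_a\,dx^a$ with $h_0=F+y^2F'(r)/r$ and $h_j=yF'(r)x^j/r$ for $j=1,2$. I split the ratio
\[
\frac{|d\rho_\e|_{g_\e}^2}{\rho_\e^2}=\frac{g_\e^{ab}h_a h_b}{y^2F(r)^2}
\]
into a hyperbolic piece using $(\ghyp)^{ab}$ and a perturbation piece using $E^{ab}:=g_\e^{ab}-(\ghyp)^{ab}$, and estimate each separately. Using $\sum_{j=1}^2(x^j)^2=r^2-y^2$, the hyperbolic piece simplifies to
\[
1+\frac{2y^2F'(r)}{rF(r)}+\frac{y^2F'(r)^2}{F(r)^2},
\]
which by \eqref{F=1} and \eqref{F(1/r)} is identically $1$ on $\{r\le 1/\b\}\cup\{r\ge\b\}$. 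On the interpolation intervals $[1/\b,2/\b]\cup[\b/2,\b]$, $F$ is bounded above and below by positive constants and $|F'|/F$ is bounded by \eqref{F'/F}, so the deviation is $O(y^2)$; and since $y=\rho_\e/(\e F)=O(\rho_\e/\e)$ there, this piece contributes at most $O(\rho_\e^2/\e^2)$.

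For the perturbation piece, \eqref{g^00}--\eqref{g^jk} give $E^{00},E^{0j}=y^2 O(\rho_\e)$ and $E^{jk}=y^2 O(\e)$. One checks that $|h_0|=O(F)$ everywhere, while $h_j$ vanishes off the set where $F'\ne 0$; on the interpolation intervals $|h_j/F|=O(y)=O(\rho_\e/\e)$ by \eqref{F'/F}, and on $\{r\le 1/\b\}$ the identity $F(1/r)=r^2F(r)$ together with $y\le r$ again gives $|h_j/F|=O(y/r)=O(\rho_\e/\e)$. Dividing $E^{ab}h_a h_b$ by $y^2F^2$, the three types of components contribute $O(\rho_\e)$, $O(\rho_\e\cdot\rho_\e/\e)$, and $O(\e\cdot\rho_\e^2/\e^2)$ respectively, so the total perturbation is $O(\rho_\e)+O(\rho_\e^2/\e)$. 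Adding the hyperbolic and perturbation contributions,
\[
\left|\frac{|d\rho_\e|_{g_\e}^2}{\rho_\e^2}-1\right|\le C\Bigl(\frac{\rho_\e^2}{\e^2}+\rho_\e+\frac{\rho_\e^2}{\e}\Bigr)\le C'\frac{\rho_\e}{\e}
\]
once $\rho_\e\le\e$, which holds sufficiently close to $\partial\Mbar_\e$.

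The main conceptual point, and the only mildly subtle part, is that in the interpolation regions for $F$ the half-space coordinate $y$ corresponds to physical distance of order $\rho_\e/\e$ rather than $\rho_\e$, so quadratic quantities like $y^2$ pick up an extra factor of $1/\e^2$. Once one sees that all non-trivial contributions are localized in the two interpolation intervals and (via the symmetry $F(1/r)=r^2F(r)$) on $\{r\le 1/\b\}$, the estimates become routine bookkeeping; and this scaling mismatch is exactly what forces the factor $1/\e$ in the stated bound.
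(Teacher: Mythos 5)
Your splitting of $g_\e^{ab}$ into $\ghyp^{ab}$ plus an error term controlled by \eqref{g^00}--\eqref{g^jk}, with the hyperbolic part of $|d(\e yF)|^2/(\e yF)^2$ computed exactly, is a sound and somewhat more explicit route than the paper's proof (which instead writes $d\rho_\e/\rho_\e=dy/y+(F'/F)\,dr$ and bounds the cross and quadratic terms using the uniform equivalence of $\Psi_\e^*g_\e$ with $\ghyp$, the estimate \eqref{g^00}, and \eqref{F'/F}). However, your case analysis has a genuine hole: by \eqref{eq:F-cases}, on the middle annulus $2/\b\le r\le \b/2$ (nonempty, since Lemma \ref{lemma:F} chooses $\b$ large) one has $F(r)=1+r^{-2}$, hence $F'(r)=-2r^{-3}\ne 0$ there. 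Consequently the claim that all nontrivial contributions are localized in the two interpolation intervals and $\{r\le 1/\b\}$ is false: on that middle annulus the ``hyperbolic piece'' is not identically $1$ (it equals $1-4y^2/(1+r^2)^2$), and $h_j\ne 0$, and your written argument never estimates either quantity there. Your implicit picture of $F$ as passing from $r^{-2}$ to $1$ across a single pair of transition zones does not match the paper's construction, which interpolates through $1+r^{-2}$ on a whole annulus.

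The omission is fixable with tools you already invoke, so the strategy itself survives: \eqref{F'/F} gives $|F'|/F\le C/r$ on all of $(0,\infty)$, and \eqref{lowerbounds:F} implies $rF(r)\ge 1$ everywhere (on the middle annulus $rF=r+r^{-1}\ge 2$), whence $y\,|F'|/F\le Cy/r\le C\,yF=C\rho_\e/\e$. This yields $|h_j|/F=O(\rho_\e/\e)$ and bounds the hyperbolic deviation by $Cy^2/r^2\le C(\rho_\e/\e)^2$ on $\{F'\ne0\}\cap\{r\ge 1/\b\}$, where $\rho_\e/\e=yF$ is bounded by a constant depending only on $\b$; with that, the rest of your bookkeeping (including the $E^{ab}$ accounting, which is correct) closes the argument, and the final restriction to $\rho_\e\le\e$ becomes unnecessary. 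Please either add this case explicitly or argue uniformly on $\{F'\ne 0\}$ via $rF\ge 1$; as written, the localization step on which your ``routine bookkeeping'' rests is not valid for the paper's $F$.
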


\begin{proof}
Away from the neck, $\rho_\e$ and $g_\e$ match the original $\rho$ and $g$ on $M$, and the
result follows there from  Lemma \ref{def-fcn}.
We compute on the neck by identifying it
with $\A_\e$ by means of the diffeomorphism $\Psi_\e$.
We obtain
\begin{equation}\label{drhoe}
\begin{aligned}
\frac{|d\rho_\e|_{g_\e}^2}{\rho_{\e}^{2}}
&= \frac{ \left|\e F(r)dy +\e y F'(r)\,dr\right|_{g_\e}^2}{(\e y F(r))^2}\\
&= \left| \frac{dy}{y}\right|_{g_\e}^2 +2 \left\langle  \frac{dy}{y}, \frac{F'(r)}{F(r)}\,dr\right\rangle_{g_\e}
+ \left|\frac{F'(r)}{F(r)}\,dr\right|^2_{g_\e}.
\end{aligned}
\end{equation}
Using
\eqref{g^00},
we see that
the first term on the right-hand side of
\eqref{drhoe} is
\begin{equation*}
\left| \frac{dy}{y}\right|_{g_\e}^2  = 1 + O(\rho_\e).
\end{equation*}

To estimate the other terms, note that
$\rho_\e$ and $g_\e$
agree on the neck with (pullbacks of) $\rho$ and $g$
except on the subset $\A_{1/2}\subset\A_\e$.
So in the computation below,
we may assume that $1/2\le r\le 2$, which means that $\rho_\e$ is bounded above and
below by constant multiples of $\e y$, and
$\Psi_\e^*g_\e$ is uniformly equivalent
to $\ghyp$.
Thus, up to a constant multiple,
the second term in \eqref{drhoe} is bounded on $\A_{1/2}$ by
\begin{equation*}
\left| \left\langle  \frac{dy}{y}, \frac{F'(r)}{F(r)}\,dr\right\rangle_{\ghyp}\right|
= \frac{|F'(r)|}{F(r)} \, \frac{y^2}{r^2}
\le \frac {C}{\e}\rho_{\e},
\end{equation*}
and the third by
\begin{equation*}
\left|\frac{F'(r)}{F(r)}\,dr\right|^2_{\ghyp}
= \frac{F'(r)^2}{F(r)^2}\,y^2
\le \frac {C}{\e}\rho_{\e}.
\end{equation*}
The result follows.
\end{proof}

The previous lemma implies that $|d\rho_\e|_{\rho_\e^2{g}_\e}= 1$ on the ideal boundary $\partial \Mbar_\e$. Thus,
the manifold $(M_\e,g_\e)$ is asymptotically hyperbolic.  A consequence of this property is that for each value of $\e$, $R(g_\e)$ approaches $-6$ as $\rho_\e\to 0$. 
We need to show that the convergence is uniform in $\e$.

It follows from \cite{GL} that any AH metric $g$ satisfies  
\begin{equation}\label{scalarcurv:comp}
R(g)=-6\left|d\rho\right|^2_{\bar{g}}+\rho\cdot P_1(\bar g, \bar g^{-1}, \partial\bar{g})+\rho^2 \cdot P_2(\bar g, \bar g^{-1}, \partial\bar{g}, \partial^2\bar{g}),
\end{equation}
where (in accord with the definition of asymptotic hyperbolicity)  $\bar{g}=\rho^2 g$ and where $P_m$, $m=1,2$, are certain universal polynomials whose terms involve 
$m^{th}$-order derivatives of the components of $\bar{g}$.  Since (by Lemma \ref{def-fcn})  we have  that $\left|d\rho\right|^2_{\bar{g}}=1+O(\rho^2)$, it follows that
$R(g)+6=O(\rho)$. Clearly then, for each individual $\e$ the function  $\rho_\e^{-1}\left(R(g_\e)+6\right)$ is bounded on $M_\e$. The uniformity question now is whether  the $C^{0,\alpha}$-norms of the functions are bounded uniformly in $\e$.

\begin{lemma}\label{scalcurv:est1}
If $c>0$ is fixed, then
$
\left\|\left(\rho_\e^{-1}\left(R(g_\e)+6\right)\right)\circ \Psi_\e\right\|_{C^{0,\alpha}(\A_{c})}$
is bounded uniformly in $\e$. 
\end{lemma}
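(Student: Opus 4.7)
The plan is to pull back to $\A_c$ via $\Psi_\e$ and apply the scalar curvature identity \eqref{scalarcurv:comp}, but using the defining function $y$ (rather than $\rho_\e$), so that the corresponding conformal compactification
$$\bar h_\e := y^2\,\Psi_\e^* g_\e = \delta + k_\e$$
is a $C^{2,\alpha}$-small perturbation of the flat Euclidean metric. By \eqref{metric} and \eqref{est:kab}, $\|k_\e\|_{C^{2,\alpha}(\A_c)}=O(\e)$ uniformly in $\e$. With this choice, \eqref{scalarcurv:comp} gives
$$R(\Psi_\e^* g_\e) + 6 = -6(|dy|^2_{\bar h_\e} - 1) + y\, P_1(\bar h_\e, \bar h_\e^{-1}, \partial\bar h_\e) + y^2\, P_2(\bar h_\e, \bar h_\e^{-1}, \partial\bar h_\e, \partial^2\bar h_\e).$$
Because the hyperbolic metric $y^{-2}\delta$ has scalar curvature identically $-6$, evaluating this identity at $k_\e=0$ forces $P_1(\delta, \delta, 0)=P_2(\delta, \delta, 0, 0)=0$; combined with the polynomial nature of $P_1, P_2$, Taylor expansion then yields $\|P_j(\bar h_\e,\ldots)\|_{C^{0,\alpha}(\A_c)} = O(\e)$ uniformly in $\e$.

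My strategy is to show that the right-hand side factors as $y\,S_\e$ with $\|S_\e\|_{C^{0,\alpha}(\A_c)} = O(\e)$ uniformly in $\e$. Granting this, since $\Psi_\e^*\rho_\e = \e y F(r)$ with $F\ge 1$ smooth by Lemma \ref{lemma:F},
$$\Psi_\e^*\bigl(\rho_\e^{-1}(R(g_\e)+6)\bigr) = (\e F)^{-1}\,S_\e,$$
and the right-hand side is $O(1)$ in $C^{0,\alpha}(\A_c)$ uniformly, by the algebra property and smoothness of $1/F$. The second and third summands $y\,P_1$ and $y^2\,P_2$ already display the required factor of $y$ on the outside and are therefore $y$ times an $O(\e)$ function in $C^{0,\alpha}(\A_c)$.

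The main obstacle is exhibiting a factor of $y$ in the first term. Expanding the matrix inverse,
$$|dy|^2_{\bar h_\e} - 1 = \bar h_\e^{00} - 1 = -k_{00,\e} + \sum_{c=0}^{2}k_{0c,\e}^2 + O(|k_\e|^3).$$
To factor $y$ out of $k_{00,\e}$ and $k_{0j,\e}$, I would combine \eqref{k00:computation} (and its analogue for $k_{0j,\e}$) with the vanishing orders $m_{00,i}(\rho,\theta), m_{j0,i}(\rho,\theta) = O(\rho^2)$ from \eqref{ma0-est}, and $m_{jk,i}(\rho,\theta) = O(\rho + |\theta|^2)$ from \eqref{mjk-est}. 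A Taylor-expansion argument factors each $m_{ab,i}$ as $\rho^2$ (or $\rho$) times a $C^{0,\alpha}$ remainder. Under the substitutions $\rho\mapsto \e y$ and $\rho\mapsto \e y/r^2$ coming from $\alpha_{\e,1}^*$ and $\beta_\e^*$, and using $Q^{0j}(y,x)=O(y/r)$ for the mixed indices, each term contributing to $k_{00,\e}$ and $k_{0j,\e}$ picks up an explicit $y^2$ factor, yielding $k_{0c,\e} = y^2\,\hat k_{0c,\e}$ with $\|\hat k_{0c,\e}\|_{C^{0,\alpha}(\A_c)} = O(\e)$. The quadratic remainder $\sum_c k_{0c,\e}^2 = O(\e^2 y^4)$ then carries a $y$ factor with $O(\e)$ coefficient, and the cubic-and-higher remainders are even smaller, completing the argument.
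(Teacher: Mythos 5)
Your argument is essentially the paper's own proof: pull back by $\Psi_\e$, rescale by $y^2$ so that $\bar h_\e=\delta+k_\e$ with $\|k_\e\|_{C^{2,\alpha}(\A_c)}=O(\e)$, use \eqref{scalarcurv:comp} to see that the $y\,P_1$ and $y^2P_2$ terms are harmless after dividing by $\e y F$, and extract the factor $y^2$ from $k_{00,\e}$ via \eqref{k00:computation}; this is correct (and in fact more careful than the paper about the matrix inversion in $|dy|^2_{\bar h_\e}$). The one inaccuracy is the intermediate claim $k_{0j,\e}=y^2\hat k_{0j,\e}$ with $\hat k_{0j,\e}=O(\e)$: in \eqref{eq:beta*-g} the term $m_{kl,2}\bigl(\tfrac{\e y}{r^2},\tfrac{\e x}{r^2}\bigr)Q^{k0}Q^{lj}$ pairs the $O(\e^2|x|^2/r^4)$ part of $m_{kl,2}$ from \eqref{mjk-est} with only one factor $Q^{k0}=O(y/r)$, so one gets only $k_{0j,\e}=O(\e y)$ on $\A_c$ (consistent with \eqref{g_0j}); since every quadratic or higher term of $\bar h_\e^{00}-1$ contains two factors carrying a $0$ index, this weaker bound still produces the needed $y^2$, and your conclusion is unaffected.
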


\begin{proof} First observe that 
$$\left(\rho_\e^{-1}\left(R(g_\e)+6\right)\right)\circ \Psi_\e=(\e  y F)^{-1} \left(R(\Psi_\e^*g_\e)+6\right).$$
To estimate the scalar curvature $R(\Psi_\e^*g_\e)$ let 
$\bar h_\e:=y^2\Psi_\e^*g_\e$. We  then have 
\begin{equation}\label{scalarcurv:est2}
R\left(\Psi_\e^*g_\e\right)+6=\left(6-6\left|dy\right|^2_{\bar h_\e}\right)+y\cdot P_1(\bar{h}_\e, \bar{h}_\e^{-1}, \partial\bar{h}_\e)+y^2\cdot P_2(\bar{h}_\e, \bar{h}_\e^{-1}, \partial\bar{h}_\e, \partial^2\bar{h}_\e).
\end{equation}
The components of the metric $\bar h_\e$ can be expressed as
\begin{equation*}
\bar h_{ab,\e}(y,x^1, x^2)=\delta_{ab}+k_{ab,\e}(y,x)
\end{equation*}
where  $\|k_{ab,\e}\|_{C^{2,\alpha}(\A_{c})}=O(\e)$ as $\e\to 0$. 
Consequently, the components of $\bar h_\e$, $\bar h_\e^{-1}$, $\e^{-1} \partial \bar h_\e$ 
and  $\e^{-1} \partial^2 \bar h_\e$ have uniformly bounded $C^{0,\alpha}(\A_{c})$-norms. 
It follows that the terms in
$(\e y F)^{-1}(R(\Psi_\e^*g_\e)+6)$ 
coming from the
last two terms of the right-hand side of
\eqref{scalarcurv:est2}
 have uniformly bounded  $C^{0,\alpha}(\A_{c})$-norms. A careful consideration of the expansion \eqref{k00:computation} yields
$$k_{00,\e}(y,x)=O(\e y^2) \text{\ \ and\ \ } \partial k_{00,\e}(y,x)=O(\e y)$$
on $\A_{c}$. Therefore,
$$\left\|\e^{-1}y^{-1}\left(6-6|dy|^2_{\bar h_\e}\right)\right\|_{C^{0,\alpha}(\A_{c})}=
\left\|\e^{-1}y^{-1}k_{00,\e}(y, x)\right\|_{C^{0,\alpha}(\A_{c})}$$
is also uniformly bounded. This observation completes our proof.
\end{proof}

We conclude the discussion of the AH geometries $(M_\e, g_\e)$ and their defining functions $\rho_\e$ by proving that 
each $\rho_\e$ is superharmonic.
\begin{lemma}\label{LaplaceRho}
If $\e>0$ is small enough then $\Delta_{g_\e}\rho_\e\le 0$.
\end{lemma}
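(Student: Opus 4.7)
The plan is to partition $M_\e$ into pieces and verify $\Delta_{g_\e}\rho_\e\le 0$ on each. Off the neck $\n_\e$ we have $g_\e = g$ and $\rho_\e = \rho$, so $\Delta_{g_\e}\rho_\e = \Delta_g\rho \le 0$ by \eqref{delta-rho}. On the neck, using the parametrization $\Psi_\e\colon \A_\e\to\n_\e$, it suffices to bound $\Delta_{\tilde g_\e}\tilde\rho_\e$, where $\tilde g_\e := \Psi_\e^* g_\e$ and $\tilde\rho_\e := \Psi_\e^*\rho_\e = \e y F(r)$. I will split $\A_\e$ into three zones dictated by the shape of $F$.

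In the outer zone $r\ge\b$, \eqref{F=1} gives $F\equiv 1$, so $\tilde\rho_\e = \e y = \alpha_{\e,1}^*\rho$; and since $r\ge 2$, \eqref{def-g} together with \eqref{phi-eqn} forces $\tilde g_\e = \alpha_{\e,1}^* g$. Hence $\Delta_{\tilde g_\e}\tilde\rho_\e = \alpha_{\e,1}^*(\Delta_g\rho)\le 0$. The inner zone $r\le 1/\b$ is handled identically via $\beta_\e$, using \eqref{F(1/r)} and the identity $\beta_\e^*\rho = \e y/r^2$ (which follows from $\beta_\e = \alpha_{\e,2}\circ I$).

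The only remaining region is the bounded annulus $1/\b\le r\le\b$, where the hyperbolic approximation is the crucial ingredient. A direct computation using $\Delta_{\ghyp}=y^2\Delta_{\mathrm{Eucl}}-y\partial_y$ on $\h^3$ gives
\begin{equation*}
\Delta_{\ghyp}(yF(r)) = -yF(r)+\frac{y^3}{r^2}\bigl(r^2 F''(r)+3rF'(r)\bigr).
\end{equation*}
Since $y^2/r^2\le 1$ and $F>0$, the bound \eqref{DeltaF} yields $\Delta_{\ghyp}(yF)\le -\tfrac{1}{2}yF$. On this bounded annulus, the estimates \eqref{g^00}--\eqref{g^jk} together with $\rho_\e=\e yF=O(\e)$ show that $\tilde g_\e^{ab}-\ghyp^{ab}=y^2\,O(\e)$ uniformly, and a short computation with the formula $\Delta_g f = |g|^{-1/2}\partial_a(|g|^{1/2}g^{ab}\partial_b f)$ shows that the first-order coefficients of $\Delta_{\tilde g_\e}-\Delta_{\ghyp}$ are of order $y\,O(\e)$. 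Since $\partial_a(yF)$ and $\partial_a\partial_b(yF)$ are uniformly bounded on this annulus, these combine to give
\begin{equation*}
\Delta_{\tilde g_\e}(\e yF) = \e\Delta_{\ghyp}(yF)+y\,O(\e^2)\le -\tfrac{1}{2}\e yF+y\,O(\e^2),
\end{equation*}
which is $\le 0$ for all sufficiently small $\e$, since $F$ is bounded below by a positive constant on $[1/\b,\b]$.

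The main obstacle I anticipate is uniform error control near the ideal boundary $y=0$, where the dominant term $-\tfrac{1}{2}\e yF$ degenerates. The mechanism that rescues the argument is that the metric perturbation $\tilde g_\e-\ghyp$ enters the Laplacian only through inverse-metric components and first-order coefficients that each carry a compensating factor of $y^2$ or $y$; consequently the entire error $\Delta_{\tilde g_\e}(\e yF)-\e\Delta_{\ghyp}(yF)$ inherits a factor of $y$ matching the vanishing rate of the main term, so it can be absorbed for $\e$ small.
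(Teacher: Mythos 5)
Your proof is correct, and on the crucial middle zone it takes a genuinely different route from the paper. You compute $\Delta_{\ghyp}(yF)$ explicitly from the coordinate formula for the hyperbolic Laplacian and then compare the coefficients of $\Delta_{\Psi_\e^*g_\e}$ and $\Delta_{\ghyp}$ directly, observing that the second-order and first-order coefficient errors carry factors of $y^2$ and $y$ respectively, so the whole error is $y\,O(\e^2)$ and is absorbed by the main term $-\tfrac12\e yF$ (your closed-form expression $\Delta_{\ghyp}(yF)=-yF+\tfrac{y^3}{r^2}(r^2F''+3rF')$ is exactly right, and the final inequality rests, as in the paper, on \eqref{DeltaF}, $y^2\le r^2$, and the positive lower bound on $F$). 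The paper instead invokes the conformal-Laplacian transformation law of \cite{LP} to rewrite $\Delta_{g_\e}\rho_\e/\rho_\e$ in terms of $R(\Psi_\e^*g_\e)$, $R(\bar h_\e)$ and $\Delta_{\bar h_\e}(y^{1/2}F)$, and then feeds in the uniform scalar-curvature estimate of Lemma \ref{scalcurv:est1} together with $\|\bar h_\e-\delta\|_{C^{2,\alpha}(\A_{1/\b})}\to 0$; that route recycles machinery already established for other purposes, while yours is more elementary and self-contained. Your zone decomposition (off the neck, $r\ge\b$, $r\le 1/\b$, $1/\b\le r\le\b$) coincides in substance with the paper's split at $\Psi_\e(\A_{1/\b})$, and your identifications $\tilde\rho_\e=\alpha_{\e,1}^*\rho$ for $r\ge\b$ and $\tilde\rho_\e=\beta_\e^*\rho$ for $r\le1/\b$ are correct. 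One small bookkeeping point: the pointwise estimates \eqref{g^00}--\eqref{g^jk} that you cite control only the sup-norm of $\tilde g_\e^{ab}-\ghyp^{ab}$; to bound the first-order coefficients of $\Delta_{\tilde g_\e}-\Delta_{\ghyp}$ you also need one derivative of the metric error, which is supplied by the $C^{2,\alpha}$ estimate \eqref{est:kab} (equivalently, $\bar h_\e\to\delta$ in $C^{2,\alpha}(\A_{1/\b})$); citing that estimate closes the argument and is exactly what the paper uses at the analogous step.
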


\begin{proof}
Away from the gluing region $\Psi_\e\left(\A_{1/\b}\right)$, 
the quotient map $\pi_\e$ is a diffeomorphism 
satisfying $\pi_\e^*g_\e=g$ and $\pi_\e^*\rho_\e=\rho$. 
Thus, away from the gluing region the inequality we need to show is an immediate consequence of \eqref{delta-rho}. 

To prove the inequality on the gluing region we utilize the transformation law for the conformal Laplacian 
(see, e.g., \cite[eq.\ (2.7)]{LP})
to write the Laplace operator for $(\Psi_\e)^*g_\e$ in terms of that of 
the conformally related metric $\bar h_\e:=y^2(\Psi_\e)^*g_\e$: 
$$
\Psi_\e^*\left(\frac{\Delta_{g_\e}\rho_\e}{\rho_\e}\right)=\frac{\Delta_{(\Psi_\e)^*g_\e}(yF)}{yF}=\frac{1}{8}R\left((\Psi_\e)^*g_\e\right)+\frac{y^{3/2}}{F}\left[\Delta_{\bar h_\e}(y^{1/2}F)-\frac{1}{8}R(\bar h_\e)y^{1/2}F\right].
$$
It follows from \eqref{est:kab}  
that 
the difference between $\bar h_\e$ and the Euclidean metric $\delta$ approaches zero, in the sense that $\|\bar h_\e-\delta\|_{C^{2,\alpha}(\A_{1/\b})}\to 0$.
Thus we have $R(\bar h_\e)\to 0$ and
$$y^{3/2}\left[\Delta_{\bar h_\e}(y^{1/2}F)-\Delta_\delta(y^{1/2}F)\right]\to 0 \text{\ \ as\ \ }\e\to 0,$$
with both convergences uniform on $\A_{1/\b}$.
A straightforward computation shows that
$$\Delta_\delta(y^{1/2}F)=-\frac{1}{4}y^{-3/2}F+3\frac{y^{1/2}}{r}F'+y^{1/2}F''.$$
Since Lemma \ref{scalcurv:est1} shows that $R\left((\Psi_\e)^*g_\e\right) = -6 + O\bigl(\Psi_\e^*\rho_\e\bigr)$, which is
equal to $-6+O(\e)$ on $\A_{1/\b}$, it follows that
$$\Psi_\e^*\left(\frac{\Delta_{g_\e}\rho_\e}{\rho_\e}\right)+1-\frac{y^2}{r^2}\cdot \frac{3rF'+r^2F''}{F}\to 0$$
uniformly on $\A_{1/\b}$ as $\e\to 0$.
Our result is now an immediate consequence of \eqref{DeltaF} and the fact that $y^2\le r^2$ everywhere.
\end{proof}

\subsection{Splicing the traceless part of the second fundamental form}
Recall that
our given second fundamental form on $M$ can be written $K=\mu+g$, where
$\mu$ is a traceless, divergence-free symmetric $2$-tensor field
of the form
$\mu = \rho^{-1}\overline\mu$ for some $\overline\mu\in C^{1,\alpha}(\overline M)$.
Our goal in this section is to create  on each $M_\e$ a traceless
symmetric $2$-tensor $\mu_\e$ that
is ``approximately divergence-free,''
and such that $\pi_\e^*\mu_\e$ is equal
to $\mu$ away from the neck.
Later we will
correct it so that it is
divergence-free.

Let $\chi\colon\R\to \R$ be a
smooth nonnegative function such that
$\chi(r)=1$ for $r\ge 3$ and $\supp(\chi) \subset (2,\infty)$.
For each $\e >0$, define $\chi_\e\colon M\to \R$ by
\begin{equation}\label{eq:def-chi}
\begin{aligned}
\chi_\e(\rho,\theta_1^a) &= \chi\left( \frac{\rho^2}{\e^2} + \sum_j\frac{ (\theta_1^j)^2}{\e}\right), &&\text{on $\overline B_{1,1}$},\\
\chi_\e(\rho,\theta_2^a) &= \chi\left( \frac{\rho^2}{\e^2} + \sum_j\frac{ (\theta_2^j)^2}{\e}\right), &&\text{on $\overline B_{1,2}$},\\
\chi_\e &\equiv 1, &&\text{on $M\setminus \bigl(\overline B_{\sqrt{3\e},1}\cup \overline B_{\sqrt{3\e},2}\bigr)$}.
\end{aligned}
\end{equation}
Then let
$\widehat\mu_\e = \chi_\e\mu$ on $M$.
(The level sets of $\chi_\e$ are half-ellipsoids with
radius proportional to $\e$ in the $\rho$ direction  and to $\sqrt{\e}$ in the
$\theta$-directions.  We have designed this unusual cutoff function
so that the divergence of $\widehat\mu_\e$ will be uniformly small,
despite the fact that the tangential and normal components of $\mu$
vanish at different rates near the ideal boundary; see Lemma
\ref{lemma:div-not-too-large} for details.)

It follows from our choice of $\chi_\e$ that $\widehat\mu_\e$ is supported in the set
$M\setminus \bigl(\overline B_{2\e,1}\cup \overline B_{2\e,2}\bigr)$.
Because $\pi_\e$ restricts to a diffeomorphism from this set to an open subset of $M_\e$,
we can define a symmetric $2$-tensor $\mu_\e$ on $M_\e$ by
\begin{equation}\label{eq:def-mue}
\mu_\e = \pi_{\e*}\widehat \mu_\e,
\end{equation}
understood to be zero on the neck.
Because $g_\e = \pi_{\e*}g$ on the support of $\mu_\e$, and $\mu_\e$ is a scalar multiple of
$\pi_{\e*}\mu$, it follows that $\mu_\e$ is traceless with respect to $g_\e$.
Although it is generaly not divergence-free, we will show below that its divergence is not too large
(see Lemma \ref{lemma:div-not-too-large}).

\section{Analysis on the Spliced Manifolds}\label{sec:analysis}

In this section we develop the results we will need
about linear elliptic operators on our spliced manifolds.

\subsection {Weighted H\"older spaces and linear differential operators}\label{holder}

To carry out the needed analysis on AH geometries with AH data, and also to provide a convenient framework
for specifying the rate at which various quantities like the trace free part of $K$
approach their requisite asymptotic values, it is convenient to work with weighted H\"older spaces.
Here we recall the definition of these spaces, using the conventions of \cite{lee}.

Suppose $(M,g)$ is an asymptotically hyperbolic Riemannian geometry of class $C^{\ell,\beta}$ and $\rho$ is a
smooth defining function.
(Our polyhomogeneous metrics, for example, are automatically asymptotically hyperbolic of
class $C^{2,\alpha}$ for every $\alpha\in (0,1)$.)
Let $(\rho,\theta^1,\theta^2)$ be background coordinates on an open subset $\U\subset\Mbar$, which we may assume
extend to a neighborhood of the closure of $\U$ in $\Mbar$.
Let $\breve B\subset \h^3 $ be a fixed precompact ball containing $(1,0,0)$.
A \emph{M\"obius chart for $M$} (or more accurately a \emph{M\"obius parametrization})
is a diffeomorphism $\Phi\colon \breve B\to \U$ whose
coordinate representation has the form
\begin{equation*}
(\rho,\theta^1,\theta^2) =
\Phi(y,x) = (a y, a x^1 + b^1, a x^2 + b^2)
\end{equation*}
for some constants $(a,b^1,b^2)$.
There is a neighborhood $W$ of $\partial M$ in $\Mbar$ covered by finitely many
background charts, and then the resulting family of M\"obius charts covers
$W\cap M$.  We extend this cover to all of $M$ by choosing finitely many interior charts,
which we  also call M\"obius charts for uniformity, to cover $M\smallsetminus W$.

Let $E$ be a tensor bundle over $M$.  For any nonnegative integer $k$ and  real number $\alpha\in [0,1]$ such that
$k+\alpha\le \ell+\beta$,
we define the intrinsic H\"older space $C^{k,\alpha}(M;E)$  as the set of sections $u$ of $E$ whose coefficients
are locally of class $C^{k,\alpha}$, and for which the following norm is finite:
\begin{equation*}
\|u\|_{k,\alpha} = \sup_{\Phi} \| \Phi^* u\|_{C^{k,\alpha}(\breve B)},
\end{equation*}
where the supremum is over our collection of M\"obius charts,
and the norm on the right-hand side is the usual Euclidean H\"older norm of the components of a tensor field
on $\breve B$.
For any real number $\delta$, we define the corresponding \emph{weighted H\"older space} by
\begin{equation*}
C^{k,\alpha}_\delta(M;E) = \{\rho^\delta u: u\in C^{k,\alpha}(M;E)\},
\end{equation*}
with norm
\begin{equation*}
\|u\|_{k,\alpha,\delta} = \|\rho^{-\delta}u\|_{k,\alpha}.
\end{equation*}
When the tensor
bundle is clear from the context, we will usually abbreviate
the notation by writing
$C^{k,\alpha}_\delta(M)$ instead of
$C^{k,\alpha}_\delta(M;E)$.
The index $\delta$ labels the rate of asymptotic decay of a given quantity,
measured in terms of the intrinsic (asymptotically hyperbolic) Riemannian metric $g$.
In particular, we note that larger positive values of $\delta$ imply more rapid decay.
It is shown in \cite[Lemma 3.7]{lee} that if $\eta$ is any covariant $r$-tensor field on $\overline M$
with coefficients in background coordinates that are
$C^{k,\alpha}$ up to the ideal boundary, then $\eta\in C^{k,\alpha}_r(M)$.  Similarly,
any vector field with coefficients that are $C^{k,\alpha}$ up to the ideal boundary
lies in $C^{k,\alpha}_{-1}(M)$.

A linear partial differential operator $P$ of order $m$ between tensor bundles is said to be
\emph{geometric} if the components of $Pu$ in any coordinates can be expressed as
linear functions of the components of $u$ and their covariant derivatives of order
at most $m$, with coefficients that are constant-coefficient polynomials
in the dimension, the components of $g$,
their partial derivatives,
and $1/\sqrt{\det g_{ij}}$,
such that the coefficient of each $j$th derivative
of $u$ involves at most the first $m-j$ derivatives of $g$.  The operators
$\Delta_g$ (the Laplace-Beltrami operator),
$\mathrm{div}_g$ (the divergence),
$\mathcal{D}_{g}$ (the conformal Killing operator),
$\mathcal{D}_{g}^*$ (the adjoint of $\mathcal{D}_{g}$), and
$\vlap_{g}$ (the vector Laplacian) introduced above are all examples of geometric operators.
It is shown by Mazzeo \cite{Mazzeo-edge} (see also \cite{lee})
that every geometric operator $P$ of order $m$ on an asymptotically hyperbolic Riemannian geometry of class $C^{l,\beta}$
defines a bounded linear map from $C^{k,\alpha}_\delta(M)$ to
$C^{k-m,\alpha}_\delta(M)$:
\begin{equation}\label{Holder-bounded}
\|Pu\|_{k-m,\alpha,\delta} \le C\|u\|_{k,\alpha,\delta},
\end{equation}
whenever $m\le k+\alpha\le \ell+\beta$; and
moreover, if $P$ is also
\emph{elliptic}, then it satisfies the following elliptic estimate for $0<\alpha<1$
and  $m< k+\alpha\le \ell+\beta$:
\begin{equation}\label{Holder-elliptic}
\|u\|_{k,\alpha,\delta}\le C \bigl( \|Pu\|_{k-m,\alpha,\delta} + \|u\|_{0,0,\delta}\bigr).
\end{equation}

Because our spliced manifolds $(M_\e,g_\e)$
are polyhomogeneous and asymptotically hyperbolic of class $C^2$,
they are also asymptotically hyperbolic of class $C^{2,\alpha}$ for some $\alpha\in(0,1)$,
and thus
the results we have just discussed hold on $M_\e$ for
each $\e$,
with $(\ell,\beta)=(2,\alpha)$.
However, for our subsequent analysis, we need to check that the constants in
\eqref{Holder-bounded} and
\eqref{Holder-elliptic} can be chosen \emph{independently of $\e$} when $\e$ is
sufficiently small.  Threading through
the arguments of \cite{lee}, we see that for a given geometric operator $P$,
the constants depend only on
uniform bounds of the following type
as $\Phi$ ranges over a collection
of M\"obius charts covering $M$:
\begin{equation}\label{unif-metric-est}
\|\Phi^*g - \ghyp\|_{C^{2,\alpha}(\breve B)}\le C,
\qquad
\sup _{\breve B} \left| \left( \Phi^* g\right)^{-1}\ghyp\right|\le C.
\end{equation}
(In \cite{lee}, attention is restricted to a countable, uniformly locally
finite family of M\"obius charts, but that additional restriction is used
only for Sobolev estimates, which do not concern us here.)
In fact, it is not necessary to use M\"obius charts per se,
in which the first background coordinate is exactly equal to $\rho$;
the arguments of \cite{lee} show that it is sufficient to
use any family of parametrizations $\Phi$ satisfying \eqref{unif-metric-est},
as long as there is a precompact subset $\breve B_0$ of $\breve B$ such that
the images of the restrictions $\Phi|_{\breve B_0}$ still cover $M$,
and the
following uniform
estimates hold in addition to  \eqref{unif-metric-est}:
\begin{equation}\label{unif-rho-est}
\|\Phi^*\rho\|_{C^{2,\alpha}(\breve B)} \le C\rho_0,
\qquad
\frac{1}{C} \rho_0 \le
\left| \Phi^*\rho\right|
\le C\rho_0,
\end{equation}
where $\rho_0=\Phi^*\rho(1,0,0)$.
Thus to obtain our uniform estimates, we need only exhibit a family of charts
for each $\e$ such that
the corresponding
estimates hold for $g=g_\e$ and $\rho=\rho_\e$,
with constants
independent of $\e$.

Start with the family of all M\"obius charts for $M$.  On the portion of $M_\e$ away
from the neck, these same charts (composed with $\pi_\e$; see Definition \ref{def-Me})  serve as
charts for $M_\e$, which  satisfy
\eqref{unif-metric-est} and \eqref{unif-rho-est} uniformly in $\e$.
Recall that we use the diffeomorphism
$\Psi_\e\colon \A_\e\to\n_\e$ to parametrize the neck.
Because $\Psi_\e^*g_\e$ is isometric to $g$ except on a subset of $\A_{1/2}\subset\A_\e$,
we need only
show how to construct
appropriate charts covering points in $\Psi_\e(\A_{1/2})$.

On this set, we will use standard coordinates on $\A_{1/2}$ as a substitute for
background coordinates.
Given $p = \Psi_\e(y_0,x^1_0,x^2_0)\in \Psi_\e(\A_{1/2})$, we define
$\Phi_p = \Psi_\e\circ \phi_p\colon \breve B\to \n_\e$,
where $\phi_p\colon \breve B\to \A_\e$ is the map
\begin{equation*}
\phi_p(x,y) = (y_0 y, y_0 x^1 + x^1_0, y_0 x^2 + x^2_0).
\end{equation*}
Note that the Jacobian of $\phi_p$ is $y_0$ times the identity.
Under this map, \eqref{metric} shows that
$g_\e$ pulls back to
\begin{equation*}
\Phi_p^* g_\e
= \ghyp + k_{ab,\e}(y_0 y, y_0 x^1 + x^1_0, y_0 x^2 + x^2_0)\frac{dx^a}{y}\,\frac{dx^b}{y},
\end{equation*}
where we recall that $\ghyp$ is the metric of the upper half space model of hyperbolic space.
If we assume that $\e$ is small enough that $\phi_p(\breve B)\subset \A_{1/4}$,
these metrics satisfy
the estimates in \eqref{unif-metric-est} uniformly in $\e$
because
the functions $k_{ab,\e}$ are uniformly small in $C^{2,\alpha}$ norm on
$\A_{1/4}$.
The defining function
$\rho_\e$ pulls back to
\begin{equation*}
\Phi_p^*\rho_\e(\vec x) = \e y_0 y F\bigl(|\phi_p(\vec x)|\bigr),
\end{equation*}
and $\Phi_p^*\rho_\e(1,0,0) = \e y_0 F\bigl(|(y_0,x^1_0,x^2_0)|\bigr)$.
Because $F$ is uniformly bounded above and below on $\A_{1/4}$
by positive constants, and all of
its derivatives are uniformly bounded there, it follows that the functions
$\Phi_p^*\rho_\e$ satisfy the estimates in \eqref{unif-rho-est}
uniformly in $\e$.

Summarizing the discussion above, we have proved

\begin{proposition}\label{uni-reg}
Suppose $P$ is a geometric operator of order $m\le 2$  acting on
sections of a tensor bundle $E\to M$, and for each $\e>0$,
$P_\e$ is the corresponding
operator on $M_\e$.
There exists a constant $C$ independent of $\e$ such that for all $C^{2,\alpha}$ sections
$u$ of $E$, all integers $k$ such that $m\le k \le 2$,
and all real numbers $\delta$,
\begin{equation*}
\|P_{\e} u\|_{k-m,\alpha,\delta}\le C\|u\|_{k,\alpha,\delta}.
\end{equation*}
If in addition $P$ is elliptic, then
\begin{equation*}
\|u\|_{k,\alpha, \delta}\le C\bigl(\|P_{\e}u\|_{k-m,\alpha,\delta}+\|u\|_{0,0,\delta}\bigr).
\end{equation*}
\end{proposition}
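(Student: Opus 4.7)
The plan is to leverage the discussion that immediately precedes the proposition: the constants appearing in the basic Hölder boundedness estimate \eqref{Holder-bounded} and the elliptic estimate \eqref{Holder-elliptic} for a geometric operator depend, by the arguments in \cite{lee} and \cite{Mazzeo-edge}, only on the uniform bounds \eqref{unif-metric-est} and \eqref{unif-rho-est} holding along some family of parametrizations $\Phi$ whose images (restricted to a fixed precompact $\breve B_0\subset\breve B$) still cover the manifold. Thus it suffices to exhibit, for each $\e$, such a family of parametrizations of $(M_\e,g_\e,\rho_\e)$ for which the constants in \eqref{unif-metric-est} and \eqref{unif-rho-est} can be chosen independently of $\e$.

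First I would split $M_\e$ into two regions: the complement of the neck $\n_\e$, and the neck itself. On the complement of $\Psi_\e(\A_{1/2})$, the quotient map $\pi_\e$ is a diffeomorphism with $\pi_\e^*g_\e=g$ and $\pi_\e^*\rho_\e=\rho$, so the ordinary M\"obius charts on $M$ (postcomposed with $\pi_\e$) automatically satisfy \eqref{unif-metric-est}--\eqref{unif-rho-est} with the same constants as on $(M,g)$, independently of $\e$.

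For the region $\Psi_\e(\A_{1/2})$, I would use the family of parametrizations $\Phi_p=\Psi_\e\circ\phi_p$ constructed in the paragraph preceding the proposition, where $\phi_p(y,x)=(y_0y,\,y_0x^1+x_0^1,\,y_0x^2+x_0^2)$. These have the right structure because $\phi_p$ pulls $\ghyp$ back to itself (it is an isometry of the hyperbolic upper half-space) and scales distances by $y_0$. Using the form \eqref{metric} for $\Psi_\e^*g_\e$, the pullback $\Phi_p^*g_\e$ differs from $\ghyp$ by a term whose coefficients in standard coordinates are $k_{ab,\e}$ evaluated at $\phi_p(\vec x)$; shrinking $\e$ so that $\phi_p(\breve B)\subset \A_{1/4}$, the estimate \eqref{est:kab} gives uniform smallness in $C^{2,\alpha}(\breve B)$ of these coefficients, which both yields the first inequality in \eqref{unif-metric-est} and, via invertibility of perturbations of $\ghyp$, the second. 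For the defining function, $\Phi_p^*\rho_\e(\vec x)=\e y_0\,y\,F(|\phi_p(\vec x)|)$, and since $F$ and its derivatives are uniformly bounded above and below on $\A_{1/4}$ by Lemma \ref{lemma:F}, one reads off \eqref{unif-rho-est} with $\rho_0=\e y_0 F(|(y_0,x_0^1,x_0^2)|)$.

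Finally, I would verify that a fixed precompact $\breve B_0\subset\breve B$ can be chosen so that the images $\Phi_p(\breve B_0)$ still cover $\Psi_\e(\A_{1/2})$ as $p$ varies; this is clear since $\phi_p$ maps a neighborhood of $(1,0,0)$ onto a neighborhood of $(y_0,x_0^1,x_0^2)$ of Euclidean diameter comparable to $y_0$, matching the hyperbolic scale. Combining the two families of charts and invoking the arguments of \cite{lee,Mazzeo-edge} yields both stated inequalities with $\e$-independent constants. The main (and really the only) substantive point is the construction and verification of the chart family across the neck; once \eqref{unif-metric-est} and \eqref{unif-rho-est} are in hand uniformly, the boundedness and elliptic estimates follow formally from the cited results.
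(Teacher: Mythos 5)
Your proposal is correct and follows essentially the same route as the paper: the paper's proof is exactly the preceding discussion, reducing to the uniform bounds \eqref{unif-metric-est}--\eqref{unif-rho-est}, using the original M\"obius charts away from the neck, and the parametrizations $\Phi_p=\Psi_\e\circ\phi_p$ on $\Psi_\e(\A_{1/2})$ with the uniform smallness of $k_{ab,\e}$ and the bounds on $F$ from Lemma \ref{lemma:F}.
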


\subsection{The Vector Laplacian on Hyperbolic Space}\label{Laplacian}

In this section, we  study the kernel of the vector Laplacian
$\vlap_{\ghyp}=\mathcal{D}_{\ghyp}^*\circ \mathcal{D}_{\ghyp}$
on
hyperbolic space $(\h^3 ,\ghyp)$.
We denote the standard coordinates by $( y, x) = ( y, x^1, x^2) = (x^0,x^1,x^2)$ on $\h^3 $, and
we use the notations
$| x|=\bigl(( x^{1})^2+( x^{2})^2\bigr){}^{1/2}$
and $r =|\vec x| = \bigl( | x|^2+ y^2\bigr){}^{1/2}$.
As a global defining function on $\h^3 $, we use
\begin{equation*}
\rhohyp( y, x^1, x^2) = \frac{2 y}{| x|^2 + ( y+1)^2}.
\end{equation*}
The function $\rhohyp$ is the pullback to the upper half-space of the
usual defining function
$\tfrac 1 2 (1-|x|^2)$ on the unit ball.

It is well known (see, for example, \cite {IMP1} or  \cite{lee})
that the vector Laplacian
$$\vlap_{\ghyp}:C^{2,\alpha}_\delta(\h^3 )\to C^{0,\alpha}_\delta(\h^3 )$$ is invertible
for $-1<\delta<3$ and $0<\alpha<1$.
This leads to the following lemma.

\begin{lemma}\label{L-injectivity}
If $-1<\delta<3$, then there is no nonzero global vector field $X$ on $\h^3 $ satisfying both $\lhyp X=0$
and the estimate $|X|_{\ghyp}\le C\rhohyp^\delta$.
\end{lemma}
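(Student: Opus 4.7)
The strategy is to upgrade the pointwise estimate $|X|_{\ghyp}\le C\rhohyp^{\delta}$ to membership in the weighted Hölder space $C^{2,\alpha}_\delta(\h^3)$, and then invoke the invertibility (hence injectivity) of $\lhyp: C^{2,\alpha}_\delta(\h^3)\to C^{0,\alpha}_\delta(\h^3)$ for $-1<\delta<3$, cited immediately before the lemma.

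First, because $\lhyp$ is elliptic with smooth coefficients on $(\h^3,\ghyp)$ and $\lhyp X=0$, standard interior elliptic regularity shows that $X$ is smooth on $\h^3$. The hypothesis $|X|_{\ghyp}\le C\rhohyp^{\delta}$ then says precisely that the weighted sup-norm $\|X\|_{0,0,\delta}$ is finite, i.e., $X\in C^{0,0}_\delta(\h^3)$.

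Next, apply the weighted elliptic Hölder estimate \eqref{Holder-elliptic} for the geometric elliptic operator $\lhyp$ on the asymptotically hyperbolic geometry $(\h^3,\ghyp)$ (this is the estimate recalled in Proposition \ref{uni-reg}, valid because $\ghyp$ is smoothly conformally compact and hence asymptotically hyperbolic of class $C^{2,\alpha}$). This gives
\begin{equation*}
\|X\|_{2,\alpha,\delta}\le C\bigl(\|\lhyp X\|_{0,\alpha,\delta}+\|X\|_{0,0,\delta}\bigr).
\end{equation*}
Since $\lhyp X=0$ and the last term is finite, we conclude that $X\in C^{2,\alpha}_\delta(\h^3)$.

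Finally, the cited invertibility statement for $\lhyp:C^{2,\alpha}_\delta(\h^3)\to C^{0,\alpha}_\delta(\h^3)$ on the range $-1<\delta<3$ implies, in particular, injectivity, so $\lhyp X=0$ forces $X=0$. There is no serious obstacle here; the only subtlety to address is the passage from a merely pointwise decay bound to a weighted Hölder bound, which is handled by elliptic regularity plus the standard weighted Schauder estimate.
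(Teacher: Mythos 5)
Your proof is correct and is essentially the paper's argument: the paper likewise notes that the hypothesis gives $X\in C^{0,0}_\delta(\h^3)$, upgrades to $X\in C^{2,\alpha}_\delta(\h^3)$ via the weighted Schauder regularity (citing Lemma 4.8(b) of \cite{lee}, which is the content of your elliptic-regularity-plus-\eqref{Holder-elliptic} step), and then concludes from the injectivity of $\lhyp$ on $C^{2,\alpha}_\delta(\h^3)$ for $-1<\delta<3$.
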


\begin{proof}
The hypothesis implies that $X\in C^{0,0}_\delta(\h^3 )$, and then Lemma 4.8(b)
of \cite{lee} implies that $X\in C^{2,\alpha}_\delta(\h^3 )$ for $0<\alpha<1$.
The result then follows from the injectivity of $\vlap_{\ghyp}$ on the latter space.
\end{proof}

We need some variations on this result, in which the
defining function $\rhohyp$ is replaced by other weight functions.
As in Section \ref{defn-func-glue}, let $\xi\colon \h^3 \to\R$ be the function $\xi( y, x) =  y F(r)$,
where $F$ is the  function of Lemma \ref{lemma:F}.
We noted earlier that $\xi\circ I = \xi$, where
$I\colon \h^3 \to \h^3 $ is the $\ghyp$-isometry given by
inversion with respect to the unit hemisphere.
Away from $0$ and $\infty$, $\xi$ is
a defining function for $\partial\h^3 $, but it blows up at both $0$ and $\infty$.

\begin{proposition}\label{eugene'}
If $-1<\delta<3$, then there is no nonzero global vector field $X$ on $\h^3 $ satisfying both $\lhyp X=0$
and the estimate $|X|_{\ghyp}\le C\xi^\delta$.
\end{proposition}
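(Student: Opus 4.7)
The approach is a blow-up argument modeled on Mazzeo--Pacard \cite{rafeandpacard}, with the aim of reducing the conclusion to Lemma~\ref{L-injectivity}. Two features of the weight $\xi$ are crucial. First, one checks from the formulas defining $\rhohyp$ and $\xi$ that $\xi\ge c\,\rhohyp$ globally on $\h^3$, so when $\delta\le 0$ the hypothesis $|X|_{\ghyp}\le C\xi^\delta$ already gives $|X|_{\ghyp}\le C'\rhohyp^\delta$ and Lemma~\ref{L-injectivity} finishes immediately; we therefore assume henceforth $\delta\in(0,3)$. Second, $\xi$ is uniformly equivalent to $\rhohyp$ on every compact subset of $\overline{\h^3}$ avoiding the two ideal boundary points $p_+$ and $p_-$ corresponding respectively to $r\to 0$ and $r\to\infty$ in the upper half-space model, where $\xi$ alone blows up. Finally, the inversion $I\colon(y,x)\mapsto (y/r^2,x/r^2)$ is a $\ghyp$-isometry that swaps $p_\pm$ and satisfies $\xi\circ I=\xi$.

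Suppose for contradiction $X\not\equiv 0$ and set $M:=\sup_{\h^3}|X|_{\ghyp}/\xi^\delta\in(0,\infty)$. Since both $X$ and $I^*X$ satisfy the hypotheses, we may replace $X$ by its $I$-symmetric or antisymmetric part, so that a maximizing sequence $\{q_n\}$ can be chosen with $r(q_n)\ge 1$. If $\{q_n\}$ has a subsequential limit off $p_-$, the limit point lies in a region where $\xi\sim\rhohyp$, so $|X|_{\ghyp}\le C'\rhohyp^\delta$ holds in the corresponding M\"obius charts, which by the elliptic estimate of Proposition~\ref{uni-reg} places $X$ globally in $C^{2,\alpha}_\delta(\h^3)$; Lemma~\ref{L-injectivity} then forces $X\equiv 0$, a contradiction. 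Hence $t_n:=r(q_n)\to\infty$. Now apply the hyperbolic dilation $\phi_n(y,x)=(t_n y,t_n x)$, an isometry of $\ghyp$, and set $Y_n:=\phi_n^*X/\xi(q_n)^\delta$. Using Lemma~\ref{lemma:F} together with $\xi=yF(r)$, a short computation gives
\begin{equation*}
\frac{\xi(\phi_n(y,x))}{\xi(q_n)}=\frac{y}{\eta_n}\cdot\frac{F(t_n r)}{F(t_n)}\longrightarrow \frac{y}{\eta_\infty}
\end{equation*}
uniformly on compact subsets of $\h^3$, where $\eta_n:=y(q_n)/t_n$ and $\eta_\infty\in[0,1]$ is a subsequential limit (we may assume $\eta_\infty>0$ by applying a further hyperbolic isometry to $q_n$ within $\{r=t_n\}$ if necessary). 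Since $\lhyp Y_n=0$, Proposition~\ref{uni-reg} yields a subsequential $C^{2,\alpha}$ limit $Y$ on $\h^3$ satisfying $\lhyp Y=0$, $|Y|_{\ghyp}\le C y^\delta$, and $|Y|_{\ghyp}\ne 0$ at the limit reference point on $\{r=1\}$.

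It remains to derive a contradiction from the properties of $Y$. The function $y$ is a defining function for the ideal boundary of $\h^3$ that is uniformly equivalent to $\rhohyp$ on any compact subset of $\overline{\h^3}$ avoiding only the single boundary point $y=\infty$; there, the bound $|Y|_{\ghyp}\le Cy^\delta$ permits growth not directly controlled by Lemma~\ref{L-injectivity}. The final step is to eliminate this remaining freedom, either by (i) an iterated blow-up at $y=\infty$ combined with the explicit classification of $\ker\vlap_{\ghyp}$ on $\h^3$ as the $6$-dimensional Lie algebra of conformal Killing fields (each of whose nonzero elements---the translations, rotation, dilation, and special conformal transformations---can be checked directly to violate $|X|_{\ghyp}\le Cy^\delta$ for $\delta\in(-1,3)$), or (ii) a cutoff-and-integrate-by-parts argument using $\lhyp=\mathcal{D}_{\ghyp}^*\mathcal{D}_{\ghyp}$ to show $\mathcal{D}_{\ghyp}Y=0$ followed by the same classification. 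Either route gives $Y\equiv 0$, contradicting its nonvanishing at the reference point.

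The main technical obstacle is this last step: upgrading the pointwise bound $|Y|_{\ghyp}\le Cy^\delta$, which is in terms of a defining function that still blows up at one boundary point, to a bound in terms of a \emph{bounded} defining function so that Lemma~\ref{L-injectivity} applies unconditionally. The blow-up technology developed in \cite{rafeandpacard} is precisely tailored to this kind of iterated reduction and provides the appropriate framework.
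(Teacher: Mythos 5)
Your blow-up scheme has genuine gaps at both ends. First, the disposal of the case in which the maximizing sequence for $|X|_{\ghyp}/\xi^\delta$ stays away from $p_-$ is not valid: from $\xi\sim\rhohyp$ \emph{near the limit point} you conclude that $|X|_{\ghyp}\le C'\rhohyp^\delta$ holds \emph{globally}, but the hypothesis only gives $|X|_{\ghyp}\le C\xi^\delta$, and for $\delta>0$ the ratio $\xi/\rhohyp$ is comparable to $r^{-2}$ near $p_+$ and to $r^{2}$ near $p_-$, hence unbounded at both points. Where the supremum of $|X|_{\ghyp}/\xi^\delta$ is approached says nothing about whether $|X|_{\ghyp}/\rhohyp^\delta$ stays bounded near $p_\pm$, so membership of $X$ in $C^{2,\alpha}_\delta(\h^3)$ does not follow and Lemma \ref{L-injectivity} cannot be invoked in that case. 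Second, the normalization $\eta_\infty>0$ ``by a further hyperbolic isometry within $\{r=t_n\}$'' is unavailable: isometries moving $q_n$ along that hemisphere do not preserve $\xi$ (only the inversion $I$ and rotations about the vertical axis do), so they destroy the hypothesis $|X|_{\ghyp}\le C\xi^\delta$; when $y(q_n)/r(q_n)\to 0$ your rescaled fields $Y_n$ have no uniform bound on compact sets and the reference points escape to the boundary, which is exactly the sub-case the paper must treat with a different rescaling (compare Case 3c of Lemma \ref{vectLapl:lemma}).

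More fundamentally, even where the blow-up works it only reduces the proposition to the statement that no nonzero $Y$ with $\lhyp Y=0$ satisfies $|Y|_{\ghyp}\le Cy^\delta$ --- which is precisely Corollary \ref{eugene}, and in the paper that corollary is deduced (for $\delta\ge 0$) from the very proposition you are proving; so the argument is circular unless you prove the $y^\delta$-weighted Liouville statement independently, and neither of your proposed routes does. The kernel of $\lhyp$ on all smooth vector fields of $\h^3$ is not a finite-dimensional space of conformal Killing fields: the identity $\int\<\lhyp Y,Y\>_{\ghyp}\,dV=\int|\mathcal{D}_{\ghyp}Y|^2_{\ghyp}\,dV$ requires cutoff error terms to vanish, which the exponential volume growth of $\h^3$ and the permitted growth $y^\delta$ (with $\delta$ up to $3$, and with no decay at the remaining boundary point) prevent; moreover the conformal Killing fields of $\h^3$ form a ten-dimensional algebra, not the six-dimensional isometry algebra you list. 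If a cutoff-and-integrate-by-parts argument sufficed throughout $-1<\delta<3$, the indicial-root restriction would play no role and Lemma \ref{L-injectivity} itself would be elementary, which it is not. The paper's proof avoids blow-up altogether: it exploits translation invariance of $\lhyp$ by mollifying $X$ in the $x$-variables, shows by explicit integral estimates that the averaged field obeys a bound $C y^{s}$ with $-1<s<3$ on $\{r\le 1\}$ while retaining the $\xi^\delta$ bound for $r\ge 1$, then applies the inversion $I$ (which preserves both $\ghyp$ and $\xi$) and mollifies once more to obtain a kernel element $Z$ bounded by $C\rhohyp^{s}$ globally; Lemma \ref{L-injectivity} gives $Z\equiv 0$, and a suitable choice of bump function shows $X\not\equiv 0$ would force $Z\not\equiv 0$. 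To salvage your approach you would need an independent proof of the $y^\delta$-weighted statement; as written, the key step is missing.
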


\begin{proof}
Suppose  $X$ is a nonzero vector field on $\h^3 $ satisfying $\lhyp X=0$ and $|X|_{\ghyp}\le C\xi^\delta$ for some $-1<\delta<3$.
As a consequence of the behavior of the metric $\ghyp$ near the ideal boundary,
the components of $X$ in standard coordinates on $\h^3$ satisfy the condition
$|X^a|\le C y\xi^\delta$.

Let $\varphi\colon \R^2\to \R$ be a smooth bump function
supported in the set where $| x|\le \frac{1}{2}$ and satisfying $0\le \varphi( x)\le 1$,
and define a smooth vector field $Y=(Y^0,Y^1,Y^2)$ on $\h^3 $ by
\begin{equation}\label{eq:def-Y}
Y^a( y, x) = \int_{\R^2} X^a ( y,  x-u)\varphi(u)\, du.
\end{equation}
Differentiation under the integral sign shows that $\lhyp Y=0$.

We  show first that on the set where
$r\le 1$, $Y$ satisfies an estimate of the form
$|Y|_{\ghyp}\le C y^s$ for some $s$ with
$-1<s<3$.
Observe that  by definition $\xi( y, x) = O( y/r^2)$ for $r\le 1$,
and consequently
\begin{equation*}
|X^a( y,  x-u)|\le C y\left(\frac{ y}{| x-u|^2+ y^2}\right)^\delta \text{ for } r=|( y, x)|\le 1.
\end{equation*}
Making the substitution $u =  x -  yv$ (thereby defining $v$), we have
\begin{align*}
|Y^a|
&\le C\int_{|u|\le \frac{1}{2}}  y\left(\frac{ y}{| x-u|^2 +  y^2}\right)^\delta\, du\\
&= C\int_{| x- y v|\le \frac{1}{2}}  y\left(\frac{ y}{ y^2| v|^2 +  y^2}\right)^\delta  y^2\,d v\\
&= C y^{3-\delta}\int_{| x- y v|\le \frac{1}{2}}\frac{d v}{(| v|^2 + 1)^\delta}.
\end{align*}
Because $|( y, x)|\le 1$, the triangle inequality implies that $\{ v: | x- y v|\le \frac{1}{2}\}$
is contained in $\{ v:| v|\le 2/ y\}$.  We now distinguish three cases.

{\sc Case 1:} If $\delta>1$, then the integrand above has finite integral over all
of $\R^2$.  Therefore, $|Y^a|\le C  y^{3-\delta}$, from which it
follows that $|Y|_{\ghyp}\le C  y^{2-\delta}$.

{\sc Case 2:} If $\delta<1$, then we let $(t,\omega)$ denote polar coordinates
in the $( v^1, v^2)$ plane, and we compute
\begin{align*}
|Y^a|
&\le  y^{3-\delta}\int_{0}^{2\pi}\int_{0}^{2/ y}\frac{t\,dt\,d\omega}{(t^2 + 1)^\delta}\\
&= C y^{3-\delta}\left[ (t^2+1)^{-\delta+1}\right]_{t=0}^{t=2/ y}\\
&\le  C' y^{3-\delta}\left(1 +  y^{2\delta-2} \right)\\
&\le C''  y^{1+\delta}.
\end{align*}
It follows that
$|Y|_{\ghyp}\le C''  y^{\delta}$.

{\sc Case 3:} If $\delta=1$, then computing in polar coordinates as before, we get
\begin{align*}
|Y^a|
&= C y^{2}\Bigl[ \log (t^2+1)\Bigr]_{t=0}^{t=2/ y}\\
&\le  C' y^2|\log y|\le C' y^{1+s},
\end{align*}
for any $s$ such that $0<s<1$.  It follows that $|Y|_{\ghyp}\le C  y^{s}$.

In the three cases above,
on the set where $r\le 1$,
we have obtained an estimate of the form $|Y|_{\ghyp}\le C  y^s$
for some $s$
such that $-1<s\le \min \{\delta, 2-\delta\}$.
On the other hand, if $r\ge 1$ and $|u|\le \tfrac{1}{2}$,  then we have
$|( y, x-u)|\sim |( y, x)|$ and $\xi( y, x-u)\sim \xi( y, x)$,
where $\sim$ means ``bounded above and below by constant multiples of.''
It follows easily that $|Y^a|\le C  y\xi^{\delta}$,
and therefore  $|Y|_{\ghyp}\le C \xi^{\delta}$ on this set.

Now let $\widetilde Y$ be the vector field $\widetilde Y = I_*Y$.  Because $I$ is
an isometry and $\xi$ is
$I$-invariant, the argument above implies that
\begin{equation*}
|\widetilde Y|_{\ghyp} \le
\begin{cases}
C \xi^\delta, &r\le 1,\\
C ( y\circ I)^s, & r\ge 1.
\end{cases}
\end{equation*}
Defining a new vector field $Z$ on $\h^3 $ by
\begin{equation*}
Z^a( y, x) = \int_{\R^2} \widetilde Y^a ( y,  x-u)\varphi(u)\, du,
\end{equation*}
we find that  $\lhyp Z=0$, and consequently
the same argument as above shows that
$Z$ satisfies the estimate
\begin{align*}
|Z|_{\ghyp} &\le
\begin{cases}
C  y^s, &r\le 1,\\
C ( y\circ I)^s, & r\ge 1
\end{cases}\\
&\le C' \rhohyp^s.
\end{align*}
As a consequence  of Lemma \ref{L-injectivity}, this implies that $Z\equiv 0$.

If $X\not\equiv 0$, choose a point $( y_0, x_0)\in \h^3 $ at which
some coordinate component $X^a( y_0, x_0)$ is nonzero.  After a translation in the
$ x$-variables (which is an isometry of $\h^3 $), we may assume that $ x_0=0$.
There is some ball $B_r(0)\subset\R^2$ such that $X^a( y_0, x)$ does not
change sign for $ x\in B_r(0)$.  If $\phi$ is chosen to be
supported in this ball, it follows from \eqref{eq:def-Y}
that $Y^a( y_0,0)\ne 0$.  Repeating this argument with $Y$ in place
of $X$ shows that there is a point
at which $Z\ne 0$.  This is a contradiction, so we conclude that
$X\equiv 0$ as claimed.
\end{proof}

We also need
the following consequence of this result, in which
the weight function is taken to be the vertical coordinate $ y$.

\begin{corollary}\label{eugene}
If $-1<\delta<3$, then there is no nonzero global vector field $X$ on $\h^3 $ satisfying both $\lhyp X=0$
and the estimate $|X|_{\ghyp}\le C y^\delta$.
\end{corollary}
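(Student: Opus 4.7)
The plan is to deduce this corollary directly from Proposition \ref{eugene'} and Lemma \ref{L-injectivity} by a simple pointwise comparison of the weight functions $y$, $\xi$, and $\rhohyp$. The three positive functions measure ``distance to the ideal boundary'' in slightly different ways, and the key observation is that $y$ is pinched between $\rhohyp$ (from below, up to a constant) and $\xi$ (from above).

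First I would record two elementary inequalities on $\h^3$. From the explicit formulas in Lemma \ref{lemma:F} one checks that $F(r)\ge 1$ for all $r>0$ (the middle region has $F=1+r^{-2}$, and the matching regions interpolate up from this), hence
\begin{equation*}
\xi(y,x) \;=\; y\,F(r) \;\ge\; y
\end{equation*}
everywhere. On the other hand, the denominator of $\rhohyp$ satisfies $|x|^2+(y+1)^2\ge 1$, so
\begin{equation*}
\rhohyp(y,x) \;=\; \frac{2y}{|x|^2+(y+1)^2} \;\le\; 2y
\end{equation*}
everywhere. Thus $y$ dominates $\rhohyp/2$ from above and is dominated by $\xi$ from above.

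Next I would split into two (overlapping) cases according to the sign of $\delta$. If $0\le \delta<3$, then raising $y\le\xi$ to the $\delta$-th power preserves the inequality, giving $|X|_{\ghyp}\le Cy^\delta\le C\xi^\delta$; Proposition \ref{eugene'} then forces $X\equiv 0$. If $-1<\delta\le 0$, then raising $\rhohyp\le 2y$ to the (non-positive) $\delta$-th power reverses the inequality, giving $y^\delta\le 2^{-\delta}\rhohyp^\delta$, and hence $|X|_{\ghyp}\le (2^{-\delta}C)\rhohyp^\delta$; Lemma \ref{L-injectivity} now gives $X\equiv 0$. In both cases the exponent remains in the allowed interval $(-1,3)$, so the hypotheses of the invoked result are met.

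There is essentially no hard step in this argument once one recognizes that Proposition \ref{eugene'} (suited to weights that blow up at the point at infinity as well as at $y=0$) handles the non-negative exponents, while Lemma \ref{L-injectivity} (suited to the globally decaying weight $\rhohyp$) handles the negative exponents. Had one tried to use a single one of these two results for all $\delta\in(-1,3)$, one would run into trouble: one cannot bound $y^\delta$ by a constant multiple of $\xi^{\delta'}$ for any admissible $\delta'$ when $\delta<0$ (because $\xi\to\infty$ at $r=0,\infty$ while $y^\delta$ does not), and symmetrically one cannot bound $y^\delta$ by $\rhohyp^{\delta'}$ when $\delta>0$ (because $y^\delta$ grows in directions where $\rhohyp$ decays). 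The case split is therefore natural and unavoidable.
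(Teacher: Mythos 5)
Your proof is correct and follows essentially the same route as the paper: a case split on the sign of $\delta$, using $y\le\xi$ (via $F\ge 1$) together with Proposition \ref{eugene'} for $\delta\ge 0$, and $y\ge\tfrac12\rhohyp$ together with Lemma \ref{L-injectivity} for $\delta<0$. The paper's proof is just a terser version of exactly this comparison of weight functions.
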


\begin{proof}
If $\delta\ge 0$, this follows from the previous proposition and the fact
that $ y\le C\xi$.  If $\delta<0$, then it follows from
Lemma \ref{L-injectivity}
and the fact that $ y\ge C\rhohyp$.
\end{proof}

\subsection{The Vector Laplacian on the Spliced Manifolds}\label{Laplacian-epsilon}

We now consider the vector Laplacians on our spliced manifolds.  For each $\e>0$,
let $(M_\e,g_\e)$
be the asymptotically hyperbolic spliced manifold defined in Definitions
\ref{def-Me} and \ref{def-ge},
and let $\vlap_\e := \vlap_{g_\e}$ be its corresponding vector Laplacian.
Since $g_\e$ is asymptotically hyperbolic of class $C^{2,\alpha}$ for some
$\alpha\in (0,1)$,
the analysis in \cite {IMP1} or  \cite{lee} shows that
$\vlap_\e:C^{2,\alpha}_\delta(M_\e)\to C^{0,\alpha}_\delta(M_\e)$
is invertible
so long as $\e>0$ and $-1<\delta<3$.
We need to show that the norm of its inverse is
bounded uniformly in $\e$. The main goal of this section is to understand this uniformity.

Fix $\alpha$ as above and $\delta\in (-1,3)$.
We start with the uniform Schauder estimate (see
Proposition \ref{uni-reg})
\begin{equation}\label{eq:schauder-vlap}
\|X\|_{2,\alpha,\delta}\le
C\bigl(\|\vlap_\e X\|_{0,\alpha,\delta}
+\|X\|_{0,0,\delta}\bigr),
\end{equation}
where $C$ is some constant independent of $\e$. We will show that there is a uniform constant $D$ such that for sufficiently small $\e$
\begin{equation}\label{main}
\|X\|_{0,0,\delta}\le D \|\vlap_\e X\|_{0,0,\delta}.
\end{equation}
This last estimate implies that
$\|X\|_{2,\alpha,\delta}\le
C(D+1)\|\vlap_\e X\|_{0,\alpha,\delta};$
i.e., that the norm of the inverse $(\vlap_\e)^{-1}$ is bounded above by ${1}/(C(D+1))$.

We  use blow-up analysis to prove (\ref{main}). The main ingredient in the analysis is the following lemma.

\begin{lemma}\label{general}
Let $( \Sigma,\gamma)$ be an asymptotically hyperbolic manifold, and let
$\{N_j\}$ be a sequence of open subsets of $\Sigma$
such that every compact subset of $\Sigma$ is contained
in $N_j$ for all but finitely many $j$.
Suppose that for each $j$ we are given
a Riemannian metric $g_j$ on $N_j$ such that
$g_j\to\gamma$
uniformly with two derivatives on every compact subset of $\Sigma$.
Assume furthermore that
there exist vector fields $Y_j$ on $N_j$,
a positive real-valued function $\zeta$ on $\Sigma$,
a compact subset $K_0\subset\bigcap_j N_j$,
and positive constants $C_1$, $C_2$ such that
\begin{enumerate}\letters
\item\label{Yj-bdd-below}
$\displaystyle \inf_{K_0}\left(\zeta^{-\delta}|Y_j|_{g_j} \right)\ge C_2$;
\item\label{Yj-bdd-above}
$\displaystyle\sup_{N_j}\left(\zeta^{-\delta}|Y_j|_{g_j}\right) \le C_1$;
\item\label{LYj-bdd-above}
$\displaystyle\sup_{N_j}\left(\zeta^{-\delta}|\vlap_{g_j}Y_j|_{g_j}\right) \to 0$.
\end{enumerate}
Then there exist a $C^\infty$ vector field $Y$ on $ \Sigma$ and a constant $C_3$ for which
\begin{equation*}
\vlap_{\gamma}Y=0,\qquad |Y|_{\gamma}\le C_3\zeta^\delta,\qquad Y\not \equiv 0.
\end{equation*}
\end{lemma}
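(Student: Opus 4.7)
The plan is a standard blow-up compactness argument: extract a subsequential limit $Y$ of $\{Y_j\}$ via uniform local Schauder estimates and Arzelà--Ascoli, then verify each of the three claimed properties in turn.

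\emph{Step 1 (uniform local bounds).} Fix a precompact open set $U \Subset  \Sigma$. By hypothesis, every compact subset of $ \Sigma$ lies in $N_j$ for $j$ large, so $U \subset N_j$ eventually. Since $\zeta$ is continuous and positive on $\overline U$, condition (b) gives a uniform $C^0$ bound on $Y_j$ over $U$, and condition (c) gives $\|\vlap_{g_j} Y_j\|_{C^0(U)} \to 0$. Because $g_j \to \gamma$ in $C^2$ on compact sets, the coefficients of the second-order elliptic operator $\vlap_{g_j}$ are uniformly close in $C^0$ to those of $\vlap_\gamma$ on $U$; writing $\vlap_{g_j} = \vlap_\gamma + (\vlap_{g_j} - \vlap_\gamma)$ and applying interior Schauder estimates for the smooth operator $\vlap_\gamma$, with the perturbation absorbed for $j$ large, yields on any $U' \Subset U$ a uniform estimate $\|Y_j\|_{C^{2,\alpha'}(U')} \le C$ for some $\alpha' \in (0,1)$ and some $C$ independent of $j$.

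\emph{Step 2 (diagonal extraction).} Choose a nested exhaustion $K_0 \Subset L_1 \Subset L_2 \Subset \cdots$ of $ \Sigma$ by compact sets. By Arzelà--Ascoli applied to the $C^{2,\alpha'}$ bounds of Step 1, a diagonal subsequence $Y_{j_k}$ converges in $C^2_{\mathrm{loc}}( \Sigma)$ to a vector field $Y$. Since $g_{j_k} \to \gamma$ in $C^2_{\mathrm{loc}}$ and $Y_{j_k} \to Y$ in $C^2_{\mathrm{loc}}$, we obtain $\vlap_{g_{j_k}} Y_{j_k} \to \vlap_\gamma Y$ in $C^0_{\mathrm{loc}}$. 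Combined with condition (c), this gives $\vlap_\gamma Y = 0$, and standard elliptic regularity for the smooth operator $\vlap_\gamma$ promotes $Y$ to $C^\infty$.

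\emph{Step 3 (verifying the remaining properties).} For any $p \in  \Sigma$ we have $p \in N_{j_k}$ eventually, so condition (b) gives $|Y_{j_k}(p)|_{g_{j_k}} \le C_1 \zeta(p)^\delta$; passing $k \to \infty$ yields $|Y(p)|_\gamma \le C_1 \zeta(p)^\delta$, so one may take $C_3 = C_1$. For nontriviality, condition (a) produces a point $p_k \in K_0$ with $|Y_{j_k}(p_k)|_{g_{j_k}} \ge C_2 \zeta(p_k)^\delta$; by compactness of $K_0$, a subsequence $p_{k_\ell} \to p_* \in K_0$, and uniform convergence of $Y_{j_k}$ and $g_{j_k}$ on $K_0$ gives $|Y(p_*)|_\gamma \ge C_2 \zeta(p_*)^\delta > 0$, hence $Y \not\equiv 0$.

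The main technical obstacle is Step 1: obtaining Schauder constants that are uniform across the varying family $\{\vlap_{g_j}\}$. The convergence $g_j \to \gamma$ in $C^2_{\mathrm{loc}}$ is the essential input here, since after fixing a compact set one can treat $\vlap_{g_j}$ as a small perturbation of the single fixed smooth operator $\vlap_\gamma$ and invoke interior estimates for the latter; this sidesteps any worry about uniform Hölder regularity of the perturbed coefficients.
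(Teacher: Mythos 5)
There is a genuine gap in Step 1, and it propagates to Step 2. Hypothesis \textup{(c)} of the lemma provides only a sup-norm bound on $|\vlap_{g_j}Y_j|_{g_j}$ (in the application, the contradiction hypothesis is $\|\vlap_{\e_j}X_j\|_{0,0,\delta}\to 0$, a weighted $C^0$ bound, with no H\"older control), whereas interior Schauder estimates require a $C^{0,\alpha'}$ bound on the source term. Writing $\vlap_\gamma Y_j=\vlap_{g_j}Y_j+(\vlap_\gamma-\vlap_{g_j})Y_j$ does not help: to apply the Schauder estimate for $\vlap_\gamma$ you would need $\|\vlap_{g_j}Y_j\|_{C^{0,\alpha'}(U)}$, which is not available from the hypotheses, and the perturbation term involves second derivatives of $Y_j$, whose H\"older norms are exactly what you are trying to bound (and on the larger set $U$ rather than $U'$, so it cannot simply be ``absorbed''). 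Since a $C^0$ bound on $\Delta u$ does not yield interior $C^2$ bounds, the claimed uniform $C^{2,\alpha'}$ estimate does not follow, and therefore the classical passage to the limit $\vlap_{g_{j_k}}Y_{j_k}\to\vlap_\gamma Y$ in $C^0_{\mathrm{loc}}$ in Step 2 is unjustified. Your closing remark that this ``sidesteps any worry about uniform H\"older regularity'' identifies the wrong worry: the problem is the H\"older regularity of the right-hand side, not of the coefficients.

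The repair is to run the compactness through $L^p$ elliptic theory instead, which is what the paper does: on a precompact set, \textup{(b)} and \textup{(c)} give uniform $L^p$ bounds on $Y_j$ and $\vlap_\gamma Y_j$ (using $g_j\to\gamma$ in $C^2$ to compare the operators), so interior Calder\'on--Zygmund estimates yield uniform $H^{2,p}$ bounds; Rellich plus the Sobolev embedding $H^{1,p}\hookrightarrow C^{0}$ for $p>3$ and a diagonal argument over an exhaustion produce a continuous limit $Y$ with $|Y|_\gamma\le 2C_1\zeta^\delta$. Because one only has this weaker convergence, the equation $\vlap_\gamma Y=0$ must then be verified in the weak sense, by pairing with compactly supported test vector fields and using the formal self-adjointness of $\vlap$ together with the $C^2_{\mathrm{loc}}$ convergence $g_{j}\to\gamma$; elliptic regularity then upgrades $Y$ to $C^\infty$ and gives the equation classically. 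Your Step 3 (the pointwise bound with $C_3=C_1$, up to the factor lost in comparing $g_{j}$ with $\gamma$, and the nontriviality from \textup{(a)} via a convergent sequence of points in $K_0$) is fine and matches the paper.
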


\begin{proof}
Let $K\subset  \Sigma$ be a precompact open set, and let $\widehat K$ be a slightly larger precompact open set
containing $\overline K$.
Since the metrics $g_j$ converge uniformly on $\widehat{K}$
(with two derivatives) to $\gamma$, we may assume that the following estimates hold on $\widehat K$ when $j$ is sufficiently large:
$$|Y_j|_{\gamma}\le 2C_1\zeta^{\delta}, \qquad
|\vlap_{\gamma}Y_j|_{\gamma}\le C\zeta^\delta,$$
for some constant $C$ independent of $j$. The function $\zeta$ is bounded above and below by positive constants
on $\widehat{K}$, so it follows that $\|Y_j\|_{H^{0,p}(\widehat{K},\gamma)}$ and $\|\vlap_{\gamma}Y_j\|_{H^{0,p}(\widehat{K},\gamma)}$ are bounded uniformly in $j$.
Sobolev estimates now imply  that $$\|Y_j\|_{H^{2,p}(K,\gamma)}\le C_K,$$
for some new constant $C_K$ depending on $K$ but independent of $j$.

By the Rellich Lemma there exists a subsequence
$Y_{j_n,K}$ of $Y_j$ that converges in $H^{1,p}(K ,\gamma)$.
For $p> 3$, we have a Sobolev embedding $H^{1,p}(K ,\gamma)\to C^{0,0}(K ,\gamma)$. This means that there exists a pointwise limit
$$Y_{K }:=\lim_{j_n\to \infty}Y_{j_n, K }$$ where $Y_{K }\in C^{0,0}(K ,\gamma)$.
Note that by construction $|Y_{K }|_\gamma\le 2C_1\zeta^\delta$ on $K$.

Consider a nested sequence of precompact open sets whose union is $\Sigma$:
$$K_1 \subset \overline{K}_1\subset K_{2} \subset \overline {K}_2\subset K_{3} \subset \overline {K}_3\dots.$$
We may use the process outlined above to inductively construct sequences $Y_{j_n,K_m }$ for each $K_m$, such that
the sequence $Y_{j_n,K_m }$ is a subsequence of $Y_{j_n, K_{m-1} }$ that converges uniformly on $K_m$.
The diagonal sequence $Y_{j_n,K_n}$ converges uniformly on every compact subset of $\Sigma$
to a continuous limit $Y$ on $\Sigma$ that satisfies $|Y|_\gamma\le 2C_1\zeta^\delta$.
The assumption
\eqref{Yj-bdd-below}
ensures that $Y\not \equiv 0$. So, it remains to show that $\vlap_{\gamma}Y=0$.

Let $X$ be a compactly supported test vector field on $ \Sigma$.
Since $g_{j_n}$ converges to $\gamma$ uniformly on $\supp X$ with two derivatives, we have
\begin{align*}
\int_{\Sigma} \left\<\vlap_{\gamma}X,Y\right\>_\gamma dV_\gamma
&=\lim_{j_n\to \infty} \int_{\Sigma}\left\<\vlap_{g_{j_n}}X,Y_{j_n,K_n }\right>_\gamma dV_\gamma\\
&= \lim_{j_n\to \infty} \int_{\Sigma}\left\<X,\vlap_{g_{j_n}}Y_{j_n,K_n }\right>_\gamma dV_\gamma
=0.
\end{align*}
Thus $Y$ is a weak solution to $\vlap_\gamma Y=0$, and it
follows from elliptic regularity that $Y\in C^\infty( \Sigma)$ and $\vlap_{\gamma}Y=0$.
\end{proof}

We now focus on verifying inequality (\ref{main}).
\begin{lemma} \label{vectLapl:lemma} If $-1<\delta<3$, then there exists a constant $D$ such that for sufficiently small $\e$ and for all
vector fields $X\in C^{2,0}_\delta(M_\e)$ we have
\begin{equation}\label{eq:unif-C0}
\|X\|_{0,0,\delta}\le D \|\vlap_\e X\|_{0,0,\delta}.
\end{equation}
\end{lemma}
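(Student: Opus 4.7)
The argument is by contradiction, following the blow-up strategy used for the analogous step in \cite{rafeandpacard}. Suppose \eqref{eq:unif-C0} fails. Then there exist a sequence $\e_j \to 0^+$ and vector fields $X_j \in C^{2,0}_\delta(M_{\e_j})$ normalized so that $\|X_j\|_{0,0,\delta} = 1$ while $\|\vlap_{\e_j} X_j\|_{0,0,\delta} \to 0$. For each $j$ choose $p_j \in M_{\e_j}$ with $\rho_{\e_j}^{-\delta}(p_j) |X_j(p_j)|_{g_{\e_j}} \ge \tfrac12$. Passing to a subsequence, we may assume the $p_j$ exhibit one of three asymptotic behaviors: (A) they stay in a fixed compact subset of the core of $M_{\e_j}$ (which, via $\pi_{\e_j}$, is identified with a fixed compact subset of $M$); (B) they approach the ideal boundary $\partial\Mbar$ while remaining outside any fixed neighborhood of the gluing points $p_1,p_2$; or (C) they enter the neck $\n_{\e_j}$.

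For Case (A), the metrics $g_{\e_j}$ agree with $g$ on a neighborhood of $p_j$ once $\e_j$ is small, so the hypotheses of Lemma \ref{general} hold with $\Sigma = M$, $\gamma = g$, $\zeta = \rho$, and $Y_j = X_j$. The conclusion is a nonzero vector field $Y$ on $M$ satisfying $\vlap_g Y = 0$ and $|Y|_g \le C\rho^\delta$, contradicting the invertibility of $\vlap_g\colon C^{2,\alpha}_\delta(M)\to C^{0,\alpha}_\delta(M)$ for $-1<\delta<3$. For Case (B), I rescale using a M\"obius-type parametrization $\Phi_j$ with scale $a_j := \rho_{\e_j}(p_j) \to 0$ centered at $p_j$, and let $Y_j := a_j^{-\delta} \Phi_j^* X_j$. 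Since $a_j\to 0$, the pullbacks $\Phi_j^* g_{\e_j}$ converge to $\ghyp$ with two derivatives on every compact subset of $\h^3$, while $\Phi_j^*\rho_{\e_j} \sim a_j y$; hence $|Y_j|_{\Phi_j^* g_{\e_j}} \le C y^\delta$, with the bound nearly attained at $(1,0,0)$. Applying Lemma \ref{general} with $\Sigma = \h^3$, $\gamma = \ghyp$, $\zeta = y$ produces a nonzero $Y$ on $\h^3$ with $\vlap_{\ghyp} Y = 0$ and $|Y|_{\ghyp}\le C y^\delta$, contradicting Corollary \ref{eugene}.

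For Case (C), I use the global neck parametrization $\Psi_{\e_j}\colon \A_{\e_j}\to \n_{\e_j}$ and set $Y_j := \e_j^{-\delta}\Psi_{\e_j}^* X_j$ on $\A_{\e_j}$. Using $\Psi_{\e_j}^*\rho_{\e_j} = \e_j\xi$, I obtain $|Y_j|_{\Psi_{\e_j}^* g_{\e_j}} \le C \xi^\delta$, with the bound nearly attained at $q_j := \Psi_{\e_j}^{-1}(p_j)$. If a subsequence of $q_j$ remains in a fixed compact subset of $\h^3$, then \eqref{metric}--\eqref{est:kab} force $\Psi_{\e_j}^* g_{\e_j} \to \ghyp$ uniformly with two derivatives on every compact subset of $\h^3$ (which the $\A_{\e_j}$ exhaust as $\e_j\to 0$), and Lemma \ref{general} yields a nonzero $Y$ on $\h^3$ with $\vlap_{\ghyp} Y = 0$ and $|Y|_{\ghyp}\le C\xi^\delta$, directly forbidden by Proposition \ref{eugene'}. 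If instead $q_j$ drifts to the ideal boundary of $\h^3$, to $r \to 0$, or to $r\to\infty$, I precompose $\Psi_{\e_j}$ with a $\ghyp$-isometry---a horizontal translation, or the inversion $I$ (which preserves $\xi$ since $\xi \circ I = \xi$)---to move the base point back into a fixed compact region, thereby reducing to the previous subcase.

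The main obstacle is Case (C): one must confirm that the rescaled metrics $\Psi_{\e_j}^* g_{\e_j}$ converge to $\ghyp$ with enough regularity on arbitrary compact subsets of $\h^3$ to invoke Lemma \ref{general}, and that every limiting base point in the neck can be mapped back to a reference compact region by an isometry that preserves the weight $\xi$ (not merely $y$). This is precisely what the tailored defining function $\xi = y F(r)$---built to satisfy $\xi \circ I = \xi$ and to interpolate between the two asymptotic defining functions of the spliced geometry---was designed for, and why the refined non-vanishing result Proposition \ref{eugene'} was proved with the weight $\xi^\delta$ rather than the naive weight $\rhohyp^\delta$ or $y^\delta$.
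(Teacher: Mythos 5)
Your overall blow-up strategy coincides with the paper's, and Cases (A), (B), and the compact subcase of (C) are handled exactly as in the paper's proof (contradicting, respectively, invertibility of $\vlap_g$ on $C^{0,0}_\delta(M)$, Corollary \ref{eugene}, and Proposition \ref{eugene'}). The gap is in your reduction of the drifting neck subcases. You claim that when the base points $q_j\in\A_{\e_j}$ drift (to $r\to\infty$, or to the ideal boundary $y\to 0$ with $r$ bounded), you can precompose $\Psi_{\e_j}$ with a $\ghyp$-isometry ``preserving $\xi$'' --- a horizontal translation or the inversion $I$ --- to bring the base point back into a fixed compact subset of $\h^3$ and then reuse the $\xi$-weighted subcase. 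This does not work: the only isometries in play that preserve $\xi=yF(r)$ are $I$ and rotations about the vertical axis through the origin; horizontal translations and dilations do not preserve $\xi$, and $I$ alone cannot rescue the drifting cases (for instance, if $r_j\to\infty$ with $y_j/r_j$ bounded below, $I$ sends $\vec x_j$ to a point with $r=1/r_j\to 0$, i.e.\ toward the boundary point $0$, not into a compact subset of $\h^3$; if $y_j\to 0$ with $r_j$ bounded, $I$ does not help at all). So the statement ``thereby reducing to the previous subcase'' is false as written, and this is precisely where the real work of the lemma lies.

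What is actually needed (and what the paper does) is to rescale by the dilation $\vec x\mapsto r_j\vec x$ when $r_j\to\infty$ with $y_j/r_j$ bounded below, and by the affine isometry $\vec x\mapsto(y_jy,\,y_jx+x_j)$ when $y_j/r_j\to 0$; under these maps the pulled-back weight $\e_j^{-\delta}\rho_{\e_j}$ is comparable to $y^\delta$ (not $\xi^\delta$), using that $F$ from Lemma \ref{lemma:F} is bounded above and below on the relevant range of $r$, and the contradiction then comes from Corollary \ref{eugene} rather than Proposition \ref{eugene'}. In addition, two points you defer must genuinely be checked in these regimes: that the rescaled domains exhaust $\h^3$ (the paper estimates the inner and outer radii of the rescaled annuli), and that the rescaled metrics converge to $\ghyp$ in $C^2$ on compact sets, which requires controlling $k_{ab,\e_j}\circ\mathcal T_j$ from \eqref{metric}--\eqref{est:kab} separately in the two regimes (images landing in $\{r>2\}$, where one sees $m_{ab,1}$ at arguments tending to $0$, versus images trapped in a fixed annulus, where one uses the $C^{2,\alpha}$ smallness of $k_{ab,\e}$ together with uniformly bounded Jacobians). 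Without these steps your Case (C) argument covers only the subcase in which the base points stay in a fixed compact subset of $\h^3$, i.e.\ the paper's Case 3a.
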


\begin{proof}
Suppose not: Then there exist positive numbers $\e_j\to 0$ and vector fields $X_j\in C^{2,0}_\delta(M_{\e_j})$ such that
$$\|X_j\|_{0,0,\delta}=1 \text{\ \ and\ \ }
\|\vlap_{\e_j}X_j\|_{0,0,\delta}\to 0.$$ The fact that $\|X_j\|_{0,0,\delta}=1$ means in particular that for each
$j$ there exists a point $q_{\e_j}\in M_{\e_j}$ such that
$$|X_j(q_{\e_j})|_{g_{\e_j}}\ge\tfrac12 { \rho_{\e_j}(q_{\e_j})^\delta}.$$
Since $\pi_\e$ maps $\Omega_{\e/3} = M\smallsetminus
\bigl(\Bbar_{\e/3,1}\cup \Bbar_{\e/3,2}\bigr)$ surjectively onto $M_\e$,
for each $j$ we may choose a representative $q_j$ for $q_{\e_j}$ such that
$q_j\in \Omega_{\e/3}$.
Passing to a subsequence if necessary,
we may assume that $q_j\to q\in \Mbar$.
Our proof now splits into several cases depending on the location of $q$. Each case  culminates in a contradiction.

{\sc Case 1:} $q\in M$. This is the easiest of the cases as it allows immediate use of Lemma \ref{general}.
Indeed, let $(\Sigma,\gamma) = (M,g)$,   $N_j=\Omega_{\e_j/3}$, $g_j\equiv g$, $\zeta\equiv \rho$, and  let $K_0$ be a compact
set containing a small neighborhood of $q$.
Vector fields $Y_j$ on $N_j$ for which $(\pi_{\e_j})_*Y_j=X_j$ necessarily satisfy the hypotheses of Lemma \ref{general}.
Thus there exists a $C^\infty$ nonzero vector field $Y$ on $M$ with $\vlap_{g}Y=0$ and $|Y|_{g}=O( \rho^\delta)$.
However, for  $\delta\in(-1,3)$, the vector Laplacian has no kernel in $C^{0,0}_\delta(M)$, so this is a contradiction.

{\sc Case 2:}
$q\in \partial\Mbar\smallsetminus \{p_1,p_2\}$. Let $\vec{ \theta}:=( \rho,  \theta^1,  \theta^2)$ be a set of 
preferred background
coordinates centered at $q$,
which we may assume to be defined on a half-disk $\mathbf{D}\subset\Mbar$ whose coordinate
radius is $R$.
Let $( \rho_j, \theta^1_{j}, \theta^2_{j})$
be the coordinates of $q_j$, $j\gg 0$.
Note that $( \rho_j,  \theta^1_{j},  \theta^2_{j})\to (0,0,0)$ as $j\to \infty$.

Let $(\Sigma,\gamma)$ be the hyperbolic space $(\h^3 ,\ghyp)$. We define
$N_j$ to be the half-ball  in $\h^3$ centered at $(0,-{ \theta^1_{j}}/{ \rho_j},-{ \theta^2_{j}}/{ \rho_j})$
of (Euclidean) radius ${R}/{ \rho_j}$.
As soon as $j$ is large enough that $|\vec\theta_j|<R/2$, the triangle inequality shows
that the half-ball of radius $R/2\rho_j$ centered at $(0,0,0)$ is contained in $N_j$, so
we see that $\bigcup_j N_j=\Sigma$
and that each compact subset of $M$ is contained in $N_j$ for all but finitely many $j$.

Consider the transformations
$\mathcal{T}_j:N_j \to M$ whose coordinate representations are given by
\begin{equation}\label{t's}
\mathcal{T}_j( y, x^1, x^2)=( \rho_j y,  \rho_j x^1+ \theta^1_{j},  \rho_j x^2+\theta^2_{j}).
\end{equation}
These transformations are chosen so that $\mathcal{T}_j(1,0,0)=q_j$.
We will now construct metrics and vector fields on $N_j$ satisfying the hypothesis of Lemma \ref{general}.

First, let $g_j:=\mathcal{T}_j^*{g}$.
It follows easily from Lemma
\ref{def-fcn}
that
$g_j\to \breve{g}$ uniformly on compact sets together with two derivatives.

Now define $Y_j:= \rho_j^{-\delta}\mathcal{T}_j^*X_j$. We compute:
\begin{align*}
|Y_j(\vec x)|_{g_j}&= \rho_j^{-\delta}|X_j(\mathcal T_j( \vec x))|_{g}\le  \rho_j^{-\delta}( \rho_j y)^\delta= y^\delta,\\
|Y_j(1,0,0)|_{g_j}&= \rho_j^{-\delta}|X_j(q_j)|_{g}\ge  \tfrac12 \rho_j^{-\delta} \rho_j^\delta=\tfrac{1}{2},\\
y^{-\delta}|\vlap_{g_j}Y_j|_{g_j}&=
( \rho_j y)^{-\delta}\Big(|\vlap_{g}X_j |_{g}\circ \mathcal{T}_j\Big)
\le \|\vlap_{\e_j}X_j\|_{0,0, \delta}\to 0.
\end{align*}
In particular, the hypotheses of Lemma \ref{general} are fulfilled for $K_0=\{(1,0,0)\}$ and $\zeta\equiv  y$.
It follows that there is a nonzero vector field $Y$ on $\h^3 $ for which
$\lhyp Y=0$  and $|Y|_{\ghyp}=O( y^\delta)$. For $\delta\in(-1,3)$ this is impossible by Corollary \ref{eugene}.

{\sc Case 3:} $q\in\{p_1,p_2\}$; without loss of generality we may assume that $q=p_1$.
For sufficiently large $j$, the point $q_{\e_j}$ is contained in the neck $\n_{\e_j}$;
let $\vec{ x}_j:=( y_j, x^1_{j}, x^2_{j})$ be the point in $\A_{\e_j}$ such that $\Psi_{\e_j}(\vec x_j) = q_{\e_j}$,
and let
$r_j:=|\vec{ x}_j|$. It follows from the fact that
$q_j\in \Omega_{\e/3}$ that $r_j>\frac{1}{3}$.

There are several different ways in which $q_j$ can converge to $p_1$. We  consider now three subcases, and
use Lemma \ref{general} in each subcase.

{\sc Case 3a:} There are uniform upper and lower bounds on $r_j$ and ${y_j}/{r_j}$; i.e., for some $d>0$,
\begin{equation}\label{rnd17}
\frac{1}{3}<r_j\le d \text{\ \ and\ \ }1\ge  \frac{ y_j}{r_j}\ge\frac{1}{d}>0.
\end{equation}
In this case we take $(\Sigma,\gamma)=(\h^3 ,\ghyp)$,
$N_j:=\mathcal{A}_{\e_j}$ and $g_j:=\Psi_{\e_j}^*g_{\e_j}$.  It is immediate that $\bigcup_j N_j=\Sigma$, and that every compact set in $\Sigma$ is contained in almost all $N_j$.
It follows from \eqref{metric} that
$g_j\to \breve{g}$ uniformly on compact sets together with two derivatives.

Consider the function $\xi(y,x)=yF(r)$ and vector fields $Y_j$ such that $(\Psi_{\e_j})_* Y_j= \e_j^{-\delta}X_j$.   Because
$\Psi_{\e_j}^* \rho_{\e_j} = \e_j\xi$, we have
\begin{equation*}
|Y_j|_{g_j} \le \xi^\delta \qquad \text{and} \qquad
\sup_{N_j}\left(\xi^{-\delta}|\vlap_{g_j}Y_j|_{g_j}\right)\to 0,
\end{equation*}
so conditions
\eqref{Yj-bdd-above} and
\eqref{LYj-bdd-above}
of Lemma \ref{general} are satisfied with $\zeta \equiv\xi$.
The compact set $K_0$ characterized by (\ref{rnd17})
contains the points $( \vec x_j)$, where we have
$$|Y_j(\vec x_j)|_{g_j}=
\e_j^{-\delta}|X_j(q_{\e_j})|_{g_{\e_j}}\ge\tfrac12 { \e_j^{-\delta}\rho_{\e_j}(q_{\e_j})^\delta}
= \tfrac12 \xi(\vec x_{j})^\delta.$$
This means that the condition \eqref{Yj-bdd-below} of Lemma \ref{general} also holds.
Therefore,  there exists a nonzero $C^\infty$ vector field on $Y$ on $\h^3 $ that satisfies
$\lhyp Y=0$ and $|Y|_{\ghyp}=O(\xi^\delta)$.
However, this contradicts Proposition \ref{eugene'}.

{\sc Case 3b:} There is a uniform positive lower bound on ${y_j}/{r_j}$, but $r_j$ are unbounded. Passing to a subsequence, we may assume that
\begin{equation*}
r_j\to \infty, \qquad
\frac{ y_j}{r_j}\ge \frac{1}{d}>0.
\end{equation*}
We  again take $(\Sigma,\gamma)=(\h^3 ,\ghyp)$. Consider the transformation
$\mathcal{T}_j:\h^3 \to \h^3 $ given by $\mathcal{T}_j(\vec{ x})=r_j \vec{ x}.$
This transformation is chosen so that the points
$\vec a_j:=\mathcal{T}_j^{-1}( \vec x_j)=(y_j/r_j,x^1_j/r_j,x^2_j/r_j)$ lie in a compact region $K_0$ of the upper hemisphere $\{r=1,  y >0\}$.

The set $N_j\subseteq \h^3 $ characterized by
\begin{equation*}
N_j :=\left\{ \vec x: \frac{1}{3r_j}< r< \frac{1}{\e_j r_j}\right\}
\end{equation*}
is taken via $\mathcal{T}_j$ to the outer portion of the expanding annulus $\mathcal{A}_{\e_j}$.
Since in this case  $r_j\to\infty$ and $\e_j r_j\to 0$, we see that $\bigcup_j N_j=\h^3$ and that any compact subset of $\h^3$ is contained in almost all $N_j$.
Define $g_j:=\mathcal{T}_j^*\Psi_{\e_j}^*g_{\e_j}$; because $\e_j r_j\to 0$,
a simple argument using
\eqref{metric} shows
that $g_j$ converges uniformly to $\ghyp$ on compact subsets of $\h^3$ together with two derivatives.

Consider
$$\zeta= y=\frac{1}{\e_j r_jF(r_jr)}\mathcal{T}_j^*\Psi_{\e_j}^* \rho_{\e_j} \text{\ \ \ and\ \ \ }
Y_j:=(\e_jr_j)^{-\delta}\mathcal{T}_j^*\Psi_{\e_j}^*X_j.$$
Since $F$ is bounded above and below by positive constants on $[1/3,\infty)$, we have
\begin{align*}
y^{-\delta}|Y_j|_{g_j}&= (\e_j r_j y)^{-\delta}\Big(|X_j|_{g_{\e_j}}\circ \Psi_{\e_j}\circ \mathcal{T}_j\Big)\le
(\e_j r_j y)^{-\delta} \bigl( T_j^* \Psi_{\e_j}^*\rho_{\e_j}\bigr)^\delta =
F(r_jr)^{\delta} \le C;\\
y^{-\delta}|\vlap_{g_j}Y_j|_{g_j}&=
(\e_j r_j y)^{-\delta}\Big(|\vlap_{g_{\e_j}}X_j|_{g_{\e_j}}\circ \Psi_{\e_j}\circ \mathcal{T}_j\Big)
\le F(r_jr)^\delta\|\vlap_{\e_j}X_j\|_{0,0, \delta}\to 0.
\end{align*}
Moreover, since
$\Psi_{\e_j}\circ\mathcal T_j(\vec a_j) = q_{\e_j}$
and
$\rho_{\e_j}(q_{\e_j}) = \Psi_{\e_j}^*\rho_{\e_j}(\vec x_j) =\e_j y_j F(r_j)$,
we have
\begin{equation*}
|Y_j(\vec a_j)|_{g_j}= (\e_jr_j)^{-\delta}|X_j(q_{\e_j})|_{g_{\e_j}}\ge
\tfrac12 (\e_jr_j)^{-\delta} \rho_{\e_j}(q_{\e_j})^\delta
=
\tfrac12 (y_j/r_j)^\delta F(r_j)^\delta
\ge C y(\vec a_j)^\delta.
\end{equation*}
Consequently, the conditions of Lemma \ref{general} are fulfilled.
This means that we now have a $C^\infty$ nonzero vector field $Y$ on $\h^3$  for which
$\lhyp Y=0$ and $|Y|_{\ghyp}=O( y^\delta)$.  This is
a contradiction to Corollary \ref{eugene}.

{\sc Case 3c:} There is no positive lower bound on ${y_j}/{r_j}$. Passing to a subsequence, we may assume that
one of the following holds:
\begin{enumerate}\romanletters
\item\label{rj->infinity}
$r_j\to \infty$, or
\item\label{yj->0}
$y_j\to 0$ and $r_j$ is bounded.
\end{enumerate}
Note that in both cases \eqref{rj->infinity} and \eqref{yj->0}, $|(  x^1_{j},   x^2_{j})|/y_j\to \infty$.
In either case, we consider transformations
$\mathcal T_j\colon \h^3\to \h^3$ defined by
\begin{equation*}
\mathcal{T}_j( y, x^1, x^2)=( y_j y,  y_j x^1+ x^1_{j},  y_j x^2+x^2_{j}),
\end{equation*}
chosen so that $\Psi_{\e_j}\circ \mathcal{T}_j(1,0,0)=q_{\e_j}$.
Let $N_j:=\mathcal{T}_j^{-1}(\mathcal{A}_{\e_j}^+)$, where $\mathcal{A}_{\e_j}^+:=\{r>\frac{1}{6}\}\cap \mathcal{A}_{\e_j}$.
The region $N_j\subseteq \h^3$ is the semiannular region centered at $(0,-x^1_{j}/{ y_j}, -{x^2_{j}}/{y_j})$ of inner radius ${1}/{6y_j}$ and outer radius
$1/{\e_j y_j}$.

In case \eqref{rj->infinity},
we have $|(  x^1_{j},   x^2_{j})|\to \infty$ and
\begin{equation*}
\left|\left(\frac{- x^1_j}{y_j}, \frac{-x^2_j}{y_j}\right)\right| - \frac{1}{6y_j}
= \frac{|(  x^1_{j},   x^2_{j})|-\frac{1}{6}}{ y_j}\ge
\frac{|(  x^1_{j},   x^2_{j})|}{2 y_j}\to \infty.
\end{equation*}
In case \eqref{yj->0},
once $j$ is big enough, we have $|(   x^1_{j},    x^2_{j})|-\frac{1}{6}\ge\frac{1}{7}$, as a consequence of $r_j> \tfrac{1}{3}$. It follows that
\begin{equation*}
\left|\left( \frac{-x^1_j}{y_j}, \frac{-x^2_j}{y_j}\right)\right| - \frac{1}{6y_j}
=
\frac{|(   x^1_{j},    x^2_{j})|-\frac{1}{6}}{  y_j}\ge
\frac{1}{7  y_j}\to \infty.
\end{equation*}
In particular, $N_j$ contains the half-ball of radius
\begin{equation*}
R_j=\min\left\{\frac{1}{\e_jy_j} - \left|\left(\frac{- x^1_j}{y_j}, \frac{-x^2_j}{y_j}\right)\right|,\
\left|\left( \frac{-x^1_j}{y_j}, \frac{-x^2_j}{y_j}\right)\right| - \frac{1}{6y_j} \right\}
\end{equation*}
centered at the origin. Since $\e_j\vec{ x}_j\to 0$, the first expression in the minimum also converges to $\infty$. Therefore, $R_j\to +\infty$, $\bigcup_j N_j=\h^3$, and every compact subset of $\h^3$ is contained in almost all $N_j$.

As in the previous case, we  take $(\Sigma, \gamma)=(\h^3,\ghyp)$ and $g_j:=\mathcal{T}_j^*\Psi_{\e_j}^*g_{\e_j}$.
Note that \eqref{metric} shows that
$g_j$
can be expressed as
\begin{equation*}
g_j = \ghyp +
k_{ab,\e_j}\bigl( \mathcal T_j(y,x)\bigr)\frac{dx^a}{y}\,\frac{dx^b}{y},
\end{equation*}
where
$k_{ab,\e}$ are functions defined on $\A_\e$ such that $\|k_{ab,\e}\|_{C^{2,\alpha}(\A_c)}\to 0$
for any fixed $c>0$.  We need to show that $\|k_{ab,\e}\circ\mathcal T_j\|_{C^{2,\alpha}(K)}\to 0$
for any fixed compact set $K\subset \h^3$.
We will consider cases   \eqref{rj->infinity} and
\eqref{yj->0} separately.
Let $K\subset\h^3$ be a compact set, and let $R$ be the supremum of $r|_K$.

First assume we are  in case  \eqref{rj->infinity}.
For any point $\vec x = (y,x)\in K$, as soon as $j$ is large enough that $R<\tfrac12 |(x^1_j,x^2_j)|/y_j$,
the reverse triangle inequality gives
\begin{align*}
r\circ \mathcal T_j(\vec x)  = y_j \left| \left( 0, \frac{x^1_j}{y_j}, \frac{x^2_j}{y_j}\right)+\vec x  \right|
\ge
y_j \left(\frac{|(x^1_j,x^2_j)|}{y_j} - R\right)
\ge \tfrac12 y_j \frac{|(x^1_j,x^2_j)|}{y_j} = |(x^1_j,x^2_j)|\to \infty.
\end{align*}
Eventually, therefore, the set $T_j(K)$ lies in the portion of $\A_\e$ where $r>2$, and
thus for $\vec x\in K$ we have
\begin{align*}
k_{ab,\e_j}\bigl( \mathcal T_j(\vec x)\bigr) =
m_{ab,1} (\e_j y_j y, \e_j y_j x^1 + \e_j x^1_j, \e_j y_j x^2 + \e_j x^2_j),
\end{align*}
and it follows easily that $\|k_{ab,\e_j}\circ\mathcal T_j\|_{C^{2,\alpha}(K)}\to 0$
because $(\e_jy_j, \e_j x^1_j, \e_j x^2_j)\to 0$.

On the other hand, if we are in case \eqref{yj->0},
then  for any $\vec x\in K$, as soon as $R<|(x^1_j,x^2_j)|/y_j$, we have
\begin{align*}
r\circ \mathcal T_j(\vec x)
\le  y_j \left( \frac{|(x^1_j,x^2_j)|}{y_j}+R\right)
\le 2 |(x^1_j,x^2_j)| \le 2   r_j\le C,
\end{align*}
since $\{r_j\}$ is bounded.  Therefore, $T_j(K)$ is contained in the
fixed annulus $\{\vec x: \tfrac 1 6 < r < C\}$, on which
$k_{ab,\e}$ converges to zero in $C^{2,\alpha}$ norm.
Because $y_j\to 0$, the transformations $T_j$ are affine transformations
with uniformly bounded Jacobians, and so again we conclude that
${\|k_{ab,\e}\circ\mathcal T_j\|_{C^{2,\alpha}(K)}}\to 0$.
In both cases, therefore, $g_j\to \ghyp$ in $C^{2,\alpha}(K)$.

This time, we let
\begin{equation*}
\zeta= y=\frac{1}{\e_jy_jF(r\circ T_j)}\mathcal{T}_j^*\Psi_{\e_j}^* \rho_{\e_j} \text{\ \ \ and\ \ \ }
Y_j:=(\e_jy_j)^{-\delta}\mathcal{T}_j^*\Psi_{\e_j}^*X_j.
\end{equation*}
Reasoning as in Case 3b, since $F$ is bounded above on $[\tfrac16,\infty)$ and bounded
below everywhere, we find that
\begin{align*}
y^{-\delta}|Y_j|_{g_j}&= (\e_j y_j y)^{-\delta}\Big(|X_j|_{g_{\e_j}}\circ \Psi_{\e_j}\circ \mathcal{T}_j\Big)\le
(\e_j y_j y)^{-\delta} \bigl( T_j^* \Psi_{\e_j}^*\rho_{\e_j}\bigr)^\delta =
F(r\circ T_j)^{\delta} \le C;\\
y^{-\delta}|\vlap_{g_j}Y_j|_{g_j}&=
(\e_j y_j y)^{-\delta}\Big(|\vlap_{g_{\e_j}}X_j|_{g_{\e_j}}\circ \Psi_{\e_j}\circ \mathcal{T}_j\Big)
\le F(r\circ T_j)^\delta\|\vlap_{\e_j}X_j\|_{0,0, \delta}\to 0;\\
|Y_j(1,0,0)|_{g_j}&= (\e_j y_j)^{-\delta}|X_j(q_{\e_j})|_{g_{\e_j}}\ge
\tfrac12 (\e_j y_j)^{-\delta} \rho_{\e_j}(q_{\e_j})^\delta
=
\tfrac12 y_j^{-\delta} \bigl(y_j F(y_j)\bigr)^\delta
\ge  c.
\end{align*}
These estimates show that the vector fields $Y_j$ satisfy the conditions of Lemma \ref{general} for the choice of $K_0=\{(1,0,0)\}$. We now see that there exists a nonzero $C^\infty$ vector field $Y$ on $\h^3$ such that
$$\lhyp Y=0, \ \ \ Y=O( y^\delta) \text{\ \ \ for \ \ \ }\delta\in (-1, 3).$$
However, this is impossible by Corollary \ref{eugene}.
\end{proof}

Now we are ready for our main theorem concerning the vector Laplacian.

\begin{theorem}\label{vectorlaplacian}
If $\e$ is sufficiently small and if $-1<\delta<3$, $0<\alpha<1$, then  the vector Laplacian
$$\vlap_\e:C^{2,\alpha}_\delta(M_\e)\to C^{0,\alpha}_\delta(M_\e)$$
is invertible and the norm of its inverse is bounded uniformly in $\e$.
\end{theorem}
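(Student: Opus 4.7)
The proof is essentially a packaging of the two substantive pieces of work already in place: Proposition \ref{uni-reg}, which gives $\e$-uniform Schauder estimates for $\vlap_\e$, and Lemma \ref{vectLapl:lemma}, which gives the $\e$-uniform $C^{0,0}_\delta$-bound $\|X\|_{0,0,\delta} \le D\,\|\vlap_\e X\|_{0,0,\delta}$. The invertibility of $\vlap_\e$ for each \emph{fixed} $\e>0$ is not new: it follows from the standard AH analysis of \cite{IMP1,lee} once one notes that $(M_\e,g_\e)$ is asymptotically hyperbolic of class $C^{2,\alpha}$ and that the weight $-1<\delta<3$ lies inside the indicial gap of the vector Laplacian. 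What remains, therefore, is to produce the $\e$-uniform bound on $\vlap_\e^{-1}$.

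The plan is to feed the uniform $C^0$-bound of Lemma \ref{vectLapl:lemma} into the Schauder estimate of Proposition \ref{uni-reg}. Applying the latter with $m=2$, $k=2$ gives, for some constant $C$ independent of $\e$,
\[
\|X\|_{2,\alpha,\delta} \;\le\; C\bigl(\|\vlap_\e X\|_{0,\alpha,\delta}+\|X\|_{0,0,\delta}\bigr).
\]
Then Lemma \ref{vectLapl:lemma}, combined with the trivial inclusion $\|\,\cdot\,\|_{0,0,\delta}\le\|\,\cdot\,\|_{0,\alpha,\delta}$, yields
\[
\|X\|_{0,0,\delta}\;\le\; D\,\|\vlap_\e X\|_{0,0,\delta}\;\le\; D\,\|\vlap_\e X\|_{0,\alpha,\delta}.
\]
Substituting this into the previous inequality gives
\[
\|X\|_{2,\alpha,\delta} \;\le\; C(1+D)\,\|\vlap_\e X\|_{0,\alpha,\delta}
\]
for all sufficiently small $\e$. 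This is precisely the statement that the operator norm of $\vlap_\e^{-1}\colon C^{0,\alpha}_\delta(M_\e)\to C^{2,\alpha}_\delta(M_\e)$ is bounded above by $C(1+D)$, independently of $\e$.

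There is no real obstacle left at this stage: all of the genuine difficulty — the blow-up argument rescaling neighborhoods of degenerating neck points and ideal-boundary points up to model geometries, the careful matching of defining functions and rescalings in the three cases $q\in M$, $q\in\partial\Mbar\smallsetminus\{p_1,p_2\}$, $q\in\{p_1,p_2\}$, and the invocation of the Liouville-type injectivity results (Lemma \ref{L-injectivity}, Proposition \ref{eugene'}, Corollary \ref{eugene}) to rule out nontrivial limit vector fields — is already absorbed into Lemma \ref{vectLapl:lemma}. The theorem is simply the clean assembly of that lemma with the uniform elliptic regularity of Proposition \ref{uni-reg}.
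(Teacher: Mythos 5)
Your proposal is correct and follows essentially the same route as the paper: invertibility for fixed $\e$ is quoted from \cite{IMP1,lee}, and the uniform bound on $\vlap_\e^{-1}$ is obtained exactly as in the text by inserting the uniform $C^{0,0}_\delta$ estimate of Lemma \ref{vectLapl:lemma} into the $\e$-independent Schauder estimate of Proposition \ref{uni-reg}, giving $\|X\|_{2,\alpha,\delta}\le C(1+D)\|\vlap_\e X\|_{0,\alpha,\delta}$.
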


\begin{proof}
The invertibility follows from the analysis in
\cite {IMP1} or  \cite{lee}, and the uniform estimate
follows by combining
\eqref{eq:schauder-vlap} with
\eqref{eq:unif-C0}.
\end{proof}

\section{Correcting the traceless part of the second fundamental form}
\label{sec:correcting-II}

In this section, we  use the elliptic PDE theory and analysis of the previous section
to add a correction to our spliced  tensor $\mu_\e$ to make it divergence-free.  First we show
that its divergence is not too large.

\begin{lemma}\label{lemma:div-not-too-large}
With $\mu_\e$ defined by \eqref{eq:def-mue},
$\divergence_{g_\e}\mu_\e\in C^{0,\alpha}_1(M_\e)$ with norm
$\|\divergence_{g_\e}\mu_\e\|_{0,\alpha,1}=O\bigl(\sqrt{\e}\bigr)$.
\end{lemma}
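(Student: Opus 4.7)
The plan is to exploit that $\mu$ is divergence-free with respect to $g$, so that the divergence of $\chi_\e\mu$ reduces to a single cutoff-gradient term, and then to extract the $\sqrt\e$ gain from a structural vanishing of the ``normal'' components of $\bar\mu$ at the ideal boundary.

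In the CMC setting with $\tau=3$, the momentum constraint gives $\divergence_g K=0$, and since $K=g+\mu$ and $\divergence_g g=0$, we have $\divergence_g\mu=0$. Hence, on the preimage of the neck in $M$, the Leibniz rule gives
\begin{equation*}
\divergence_g(\chi_\e\mu)\;=\;\mu(\nabla\chi_\e),\qquad (\mu(\nabla\chi_\e))_a=g^{bc}\mu_{ca}\partial_b\chi_\e,
\end{equation*}
which descends under $\pi_\e$ to $\divergence_{g_\e}\mu_\e$ on $M_\e$. By the definition of $\chi_\e$, this is supported on the thin half-shell where $\rho^2/\e^2+|\theta|^2/\e\in[2,3]$ in preferred background coordinates, so $\rho\le C\e$ and $|\theta|\le C\sqrt\e$ on the support. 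Writing $g^{bc}=\rho^2\bar g^{bc}$, $\mu_{ca}=\rho^{-1}\bar\mu_{ca}$, and substituting $\partial_0\chi_\e=2\rho\chi'/\e^2$, $\partial_j\chi_\e=2\theta^j\chi'/\e$, the coordinate components of $\divergence_g\mu_\e$ in these background coordinates become
\begin{equation*}
(\divergence_g\mu_\e)_a \;=\; 2\chi'\left[\frac{\rho^2\,\bar\mu_{0a}}{\e^2}+\frac{\rho\,\theta^j\bar\mu_{ja}}{\e}\right]+R_a,
\end{equation*}
where $R_a$ collects strictly subleading corrections from $\bar g-\delta$; these enter with extra factors of $\rho$ or $|\theta|^2$ and contribute at most at order $O(\e)$ after weighting.

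The crux, and the principal obstacle, is that the naive bounds $|\mu|_g=O(\rho)$ and $|d\chi_\e|_g=O(1)$ yield only $\rho^{-1}|\divergence_g\mu_\e|_g=O(1)$ --- an order of magnitude too large. The $\sqrt\e$ improvement requires the structural input that $\bar\mu_{0a}=O(\rho)$, i.e.\ the components of $\bar\mu$ involving the normal direction vanish at the ideal boundary. This fact is itself a consequence of $\divergence_g\mu=0$: expanding the divergence to leading order in preferred coordinates using $g\sim\rho^{-2}(d\rho^2+d\theta^2)$, the associated Christoffel symbols, and the trace-free condition $\bar g^{ab}\bar\mu_{ab}=0$, one finds $(\divergence_g\mu)_a=-2\bar\mu_{0a}+O(\rho)$; since the left-hand side vanishes, $\bar\mu_{0a}(0,\theta)=0$, and $C^{1,\alpha}$ regularity of $\bar\mu$ upgrades this to $\bar\mu_{0a}=O(\rho)$ uniformly. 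Inserting this, the first bracket in the display is $O(\rho^3/\e^2)=O(\e)$, and the second contributes $O(\rho|\theta|/\e)=O(\sqrt\e)$ when $a$ is tangential (and $O(\e^{3/2})$ when $a=0$, using $\bar\mu_{j0}=O(\rho)$ as well). Hence $|\divergence_g\mu_\e|_{\bar g}=O(\sqrt\e)$, and since $|\omega|_g=\rho|\omega|_{\bar g}$ for $1$-forms, $\rho_\e^{-1}|\divergence_{g_\e}\mu_\e|_{g_\e}=O(\sqrt\e)$, the required $C^{0,0}_1$ bound.

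The promotion to $C^{0,\alpha}_1$ is a routine technical follow-up. Working in M\"obius charts for $g_\e$ centered at points of the support (as set up in Section \ref{holder}), the rescaled metric has uniform $C^{2,\alpha}$ bounds, $\bar\mu$ is $C^{1,\alpha}$ up to the ideal boundary, and the pulled-back cutoff $\chi_\e$ has uniformly bounded $C^{2,\alpha}$-norm at the correct scale. The same component-by-component analysis then yields a uniform $C^{0,\alpha}$ bound of order $\sqrt\e$ after the normalization by $\rho_\e$. The anisotropic shape of $\chi_\e$ --- half-ellipsoids of radius $\e$ in $\rho$ and $\sqrt\e$ in $\theta$ --- is calibrated precisely to balance the two surviving terms at the $\sqrt\e$ scale, which is exactly the point of the authors' unusual choice of cutoff.
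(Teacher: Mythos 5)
Your proposal is correct and follows essentially the same route as the paper: reduce $\divergence_{g}(\chi_\e\mu)$ to the cutoff-gradient term $(\grad_g\chi_\e)\into\mu$ via $\divergence_g\mu=0$, extract the key extra power of $\rho$ in the normal component from the conformally transformed divergence-free condition (the paper states this invariantly as $(\grad_{\overline g}\rho)\into\overline\mu=\tfrac12\rho\,\divergence_{\overline g}\overline\mu$, which is exactly your $\overline\mu_{0a}=O(\rho)$), and then balance the $O(\rho^3/\e^2)=O(\e)$ normal term against the $O(\rho|\theta|/\e)=O(\sqrt\e)$ tangential term on the anisotropic support. The only difference is presentational: the paper handles the $C^{0,\alpha}_1$ norm by showing $f_\e=\chi'(\cdot)$ is uniformly bounded in $C^{0,\alpha}_0$ and using the module structure of the weighted spaces, whereas you sketch the equivalent M\"obius-chart argument.
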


\begin{proof}
Recall that we have defined $\widehat\mu_\e = \chi_\e\mu$,
where $\mu$ is the given traceless second fundamental form and $\chi_\e$
is defined by \eqref{eq:def-chi}.
Restricted to the support of $\widehat \mu_\e$, the projection
$\pi_\e$ is a
diffeomorphism taking $g$ to $g_\e$ and $\widehat \mu_\e$ to $\mu_\e$,
so it suffices to show that $\divergence_{g}\widehat\mu_\e\in C^{0,\alpha}_1(M)$ with
$O(\sqrt{\e})$ norm.

For a vector field $Y$ and a symmetric $2$-tensor $\eta$, let us use
the notation $Y \into\eta$ to denote the $1$-form $\eta(Y,\cdot)$.
It is easy
to check (by doing the computation in M\"obius coordinates) that
the map $(Y,\eta)\mapsto Y\into \eta$ is a continuous bilinear map
from $C^{k,\alpha}_{\delta_1}(M)\times C^{k,\alpha}_{\delta_2}(M)$
to $C^{k,\alpha}_{\delta_1+\delta_2}(M)$ for any $\delta_1,\delta_2\in\R$.

It follows easily from the definition of the divergence operator and the fact
that $\mu$ is divergence-free that
\begin{equation*}
\divergence _g \widehat\mu_\e = \chi_\e \divergence_g \mu + (\grad_g \chi_\e)\into\mu = (\grad_g\chi_\e)\into\mu.
\end{equation*}
The support of $\grad_g(\chi_\e)$ is contained in the union of the two half-balls $\overline B_{1,1}\cup\overline B_{1,2}$.
Letting $\theta^j$ denote either $\theta^j_1$ or $\theta^j_2$ depending on which half-ball
we are in, we compute
\begin{equation*}
\grad_g\chi_\e
= \chi'\left( \frac{\rho^2}{\e^2} + \sum_j \frac{(\theta^j)^2}{\e}\right)
\left( \frac{2\rho \grad_g\rho}{\e^2} + \sum_j \frac{2\theta^j \grad_g\theta^j}{\e}\right),
\end{equation*}
and therefore
\begin{equation}\label{eq:divg-mue}
\divergence _g \widehat\mu_\e =
\chi'\left( \frac{\rho^2}{\e^2} + \sum_j \frac{(\theta^j)^2}{\e}\right)
\left( \frac{2\rho }{\e^2} (\grad_g \rho)\into\mu + \sum_j\frac{ 2\theta^j}{\e} (\grad_g\theta^j)\into\mu\right).
\end{equation}

Using the formula for the change in Christoffel
symbols under a conformal change in metric (see, for example, \cite[equation (3.10)]{lee}),
we find that
\begin{equation*}
0 = \divergence_g\mu
=  \rho^2\divergence_{\overline g}\mu- \rho(\grad_{\overline g}\rho)\into\mu.
\end{equation*}
After substituting $\mu= \rho^{-1}\overline\mu$, this becomes
\begin{equation*}
0
= \rho^2\divergence_{\overline g}(\rho^{-1}\overline\mu) - \rho(\grad_{\overline g}\rho)\into(\rho^{-1}\overline\mu)
= \rho\divergence_{\overline g}\overline\mu - 2 (\grad_{\overline g}\rho) \into{\overline \mu}.
\end{equation*}
It follows that
$(\grad_{\overline g}\rho) \into{\overline \mu} = \tfrac12 \rho\divergence_{\overline g}\overline\mu$,
and thus
\begin{align*}
(\grad_{g}\rho)\into\mu
&= (\rho^2 \grad_{\overline g}\rho)\into(\rho^{-1}\overline \mu)\\
&=\tfrac12 \rho^2\divergence_{\overline g}\overline\mu.
\end{align*}
Since $\divergence_{\overline g}\overline \mu$ is a $1$-form
whose coefficients in background coordinates are in $C^{0,\alpha}(\overline M)$,
$\divergence_{\overline g}\overline \mu$ is contained in $C^{0,\alpha}_1(M)$, and thus $(\grad_{g}\rho)\into\mu\in C^{0,\alpha}_{3}(M)$.
On the other hand, a straightforward computation shows that $\grad_g\theta^j\in C^{0,\alpha}_1(M)$,
and therefore $(\grad_g\theta^j)\into \mu\in C^{0,\alpha}_2(M)$. We conclude  that
the following quantities are finite:
\begin{equation*}
\| \rho^{-2} (\grad_g\rho)\into\mu\|_{0,\alpha,1}, \qquad
\| \rho^{-1} (\grad_g\theta^j)\into\mu\|_{0,\alpha,1} \quad (j=1,2).
\end{equation*}

With this in mind, we rewrite \eqref{eq:divg-mue} as
\begin{equation}\label{eq:divg-mue-2}
\divergence _g \widehat\mu_\e =
f_\e(\rho,\theta)
\left( \frac{2\rho^3 }{\e^2} \rho^{-2}(\grad_g \rho)\into\mu + \sum_j\frac{ 2\theta^j\rho}{\e} \rho^{-1}(\grad_g\theta^j)\into\mu\right),
\end{equation}
where
\begin{equation*}
f_\e(\rho,\theta) = \chi'\left( \frac{\rho^2}{\e^2} + \sum_j \frac{(\theta^j)^2}{\e}\right).
\end{equation*}
Note that
$f_\e$ is bounded independently of $\e$, and is supported
in a region where
$\rho\le \e\sqrt{3}$ and $|\theta^j|\le \sqrt{3\e}$.
Its differential satisfies
\begin{equation*}
df_\e = \chi''\left( \frac{\rho^2}{\e^2} + \sum_j \frac{(\theta^j)^2}{\e}\right)
\left( \frac{2\rho d\rho}{\e^2} + \sum_j \frac{2\theta^j d\theta^j}{\e}\right).
\end{equation*}
Because
 $|d\rho|_g$ and $|d\theta^j|_g$ are both bounded by multiples
of $\rho$, it follows that $|df_\e|_g$ is bounded uniformly in $\e$.
Therefore, $f_\e$ is uniformly bounded in $C^1(M)$ and
thus also in $C^{0,\alpha}_0(M)$.
Inserting these
estimates into
\eqref{eq:divg-mue-2}, we find that
\begin{align*}
\|\divergence _g &\widehat\mu_\e\|_{0,\alpha,1} \\
&\le
\|f_\e\|_{0,\alpha,0}\biggl(\frac{2(\e\sqrt{3})^3}{\e^2}
\left\| \rho^{-2}(\grad_g\rho)\into\mu\right\|_{0,\alpha,1} +
\sum_j
\frac{2(\sqrt{3\e})(\e\sqrt{3})}{\e}
\left\| \rho^{-1}(\grad_g\theta^j)\into\mu\right\|_{0,\alpha,1}
\biggr)\\
&\le C'(\e + \sqrt{\e})\\
&\le C''\sqrt{\e}.
\end{align*}
This completes the proof.
\end{proof}

Using the preceding result and Theorem  \ref{vectorlaplacian}, 
the idea now is to make a small perturbation of $\mu_\e$, which we denote by $\nu_\e$, for which $\mathrm{div}_{g_\e} \nu_\e=0$.

\begin{theorem}\label{thm:nu-convergence}
For each sufficiently small $\e>0$, 
there is a polyhomogenous symmetric $2$-tensor field $\nu_\e$, which
is traceless and divergence-free with respect to $g_\e$, 
such that $\rho^2\nu_\e$ has a $C^2$ extension to $\Mbar_\e$
that vanishes on $\partial\Mbar_\e$, and $\nu_\e$ 
satisfies
\begin{equation}\label{main-mu}
\| \nu_\e - \mu_\e\|_{1,\alpha, 1}=O(\sqrt{\e}).
\end{equation}
In particular, away from the neck,  $\nu_\e$ converges uniformly in $C^{1,\alpha}_1$ 
\(and therefore also in $C^{1,\alpha}$\)
to
\(the projection of\) $\mu$.
\end{theorem}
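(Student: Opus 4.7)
The plan is to apply the York prescription: find a vector field $X_\e$ on $M_\e$ solving
\begin{equation*}
\vlap_{g_\e} X_\e = (\divergence_{g_\e}\mu_\e)^\sharp,
\end{equation*}
and set $\nu_\e := \mu_\e + \mathcal{D}_{g_\e} X_\e$. Since the image of $\mathcal{D}_{g_\e}$ lies in the space of traceless symmetric $2$-tensors, and $\mu_\e$ is itself traceless by construction, $\nu_\e$ will automatically be traceless. Using $\vlap_{g_\e}=\mathcal{D}_{g_\e}^*\mathcal{D}_{g_\e}$ together with the adjoint identity \eqref{adjoint}, the defining equation rearranges to $\divergence_{g_\e}(\mathcal{D}_{g_\e} X_\e)=-\divergence_{g_\e}\mu_\e$, forcing $\divergence_{g_\e}\nu_\e=0$.

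First I would solve this equation using Theorem \ref{vectorlaplacian}. By Lemma \ref{lemma:div-not-too-large}, $\divergence_{g_\e}\mu_\e\in C^{0,\alpha}_1(M_\e)$ with norm $O(\sqrt\e)$, and raising an index is an isometry with respect to the intrinsic metric and hence preserves the weighted H\"older norm. Since $1\in(-1,3)$, Theorem \ref{vectorlaplacian} produces a unique $X_\e\in C^{2,\alpha}_1(M_\e)$ with
\begin{equation*}
\|X_\e\|_{2,\alpha,1}\le C\|(\divergence_{g_\e}\mu_\e)^\sharp\|_{0,\alpha,1}=O(\sqrt\e),
\end{equation*}
where $C$ is independent of $\e$. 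Applying the uniform Schauder bound of Proposition \ref{uni-reg} to the first-order geometric operator $\mathcal{D}_{g_\e}$,
\begin{equation*}
\|\nu_\e-\mu_\e\|_{1,\alpha,1}=\|\mathcal{D}_{g_\e} X_\e\|_{1,\alpha,1}\le C\|X_\e\|_{2,\alpha,1}=O(\sqrt\e),
\end{equation*}
which is \eqref{main-mu}. The uniform convergence of $\nu_\e$ to $\mu$ in $C^{1,\alpha}_1$ away from the neck is then immediate: outside the cutoff half-balls, $\pi_\e^*\mu_\e=\mu$, while $\mathcal{D}_{g_\e}X_\e\to 0$ globally in $C^{1,\alpha}_1$.

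Next I would verify the polyhomogeneity and the $C^2$ extension at the ideal boundary. Since the original $\mu$ is polyhomogeneous and the cutoff $\chi_\e$ is smooth, $\mu_\e$ is polyhomogeneous on $M_\e$, and therefore so is $(\divergence_{g_\e}\mu_\e)^\sharp$. The Andersson--Chru\'sciel existence/uniqueness theory for solutions of the vector Laplacian with polyhomogeneous data, cited in Section \ref{AC} from \cite{piot}, then implies that $X_\e$ is polyhomogeneous; hence so is $\nu_\e$. The fact that $\nu_\e\in C^{1,\alpha}_1(M_\e)$ means its intrinsic norm decays like $\rho_\e$, and a direct background-coordinate computation (as in Section \ref{sec:AHID}) shows that $\rho_\e^2\nu_\e$ extends continuously to $\Mbar_\e$ and vanishes on $\partial\Mbar_\e$; reading off the first few terms of the polyhomogeneous expansion upgrades this extension to $C^2$.

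The step I expect to be the most delicate is obtaining $\e$-uniform control of the inverse of $\vlap_{g_\e}$ on $C^{0,\alpha}_1(M_\e)$, but this has already been secured by the blow-up analysis of Section \ref{Laplacian-epsilon} and packaged in Theorem \ref{vectorlaplacian}. The rest is straightforward bookkeeping: applying the uniform elliptic estimates to the right-hand side bound of Lemma \ref{lemma:div-not-too-large}, and propagating polyhomogeneity from the data to the solution via \cite{piot}.
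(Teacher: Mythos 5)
Your proposal is correct and follows essentially the same route as the paper: York's prescription with $X_\e$ obtained from Theorem \ref{vectorlaplacian}, the $O(\sqrt\e)$ bound from Lemma \ref{lemma:div-not-too-large} together with the uniform boundedness of $\mathcal{D}_{g_\e}$, and polyhomogeneity via the Andersson--Chru\'sciel theory. The only place the paper is more explicit than you are is at the end: it identifies $X_\e$ with the polyhomogeneous solution by matching decay rates (both are $O(\rho_\e^2)$ in background components) and invoking the uniqueness in Theorem 6.3.10 of \cite{piot}, and it checks that the first log terms in $X_\e$ (hence in $\nu_\e$) occur only at positive powers of $\rho_\e$ beyond the leading order, which is exactly what makes $\rho_\e^2\nu_\e$ a $C^2$ (not merely continuous) extension vanishing on $\partial\Mbar_\e$ --- your phrase ``reading off the first few terms'' is where that bookkeeping must be done, since membership in $C^{1,\alpha}_1$ alone would not rule out a low-order log term.
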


\begin{proof}
We use the standard technique for finding a divergence-free perturbation of $\mu_\e$ discussed in Section \ref{sec:prelim} (see \eqref{babyL}). Relying on Theorem \ref{vectorlaplacian},
for each small $\e>0$, we let $X_\e$ be the unique vector field in $C^{2,\alpha}_1(M_\e)$
that satisfies
\begin{equation}\label{eq:vlap-eqn}
\vlap_{\e}X_\e=(\divergence_{g_\e} \mu_\e)^{\sharp},
\end{equation}
and we set 
$\nu_\e:=\mu_\e+\mathcal{D}_\e X_\e$.  By definition of the conformal Killing operator,
$\nu_\e$ is traceless; and by construction
it is divergence-free.
Since $\|\mathrm{div}_{g_\e}\mu_\e\|_{0,\alpha,1}=O(\sqrt{\e})$,
it follows from the uniform estimate of Theorem \ref{vectorlaplacian} that
$\|X_\e\|_{2,\alpha,1}=O(\sqrt{\e})$. The arguments of Section \ref{holder} show that
$\mathcal{D}_\e$ is bounded from
$C^{2,\alpha}_\delta(M_\e)$ to $C^{1,\alpha}_\delta(M_\e)$ uniformly in $\e$, 
and therefore 
\eqref{main-mu}  is satisfied. It now remains to show that 
$\nu_\e$ is polyhomogeneous and that $\rho_\e^2\nu_\e$ has a $C^2$ extension to $\Mbar_\e$ which vanishes on the ideal boundary. 

We start by observing that the right hand side of \eqref{eq:vlap-eqn} is polyhomogeneous and that, by Theorem 6.3.10 of \cite{piot}, there exists a polyhomogeneous solution $X^{\mathrm{phg}}_\e$ of \eqref{eq:vlap-eqn}. Since $\mu$ has an asymptotic expansion beginning with $\rho^{-1}$ and the first log term (if any) appearing with $\rho^s$, $s>0$, a computation shows that the vector field on the right-hand side of  \eqref{eq:vlap-eqn} has an expansion beginning with a $\rho_\e^2$ term, and with the first log terms (if any) appearing in the $\rho_\e^{s+3}$ term, $s>0$.  
Inserting the general asymptotic expansion for $X^{\mathrm{phg}}_\e$ into 
\eqref{eq:vlap-eqn} and matching like terms inductively, we conclude that 
$X^{\mathrm{phg}}_\e$ has an asymptotic expansion beginning with $\rho_\e^2$ 
and the first log terms appearing with $\rho_\e^{s+3}$, $s>0$. (Note that the first log terms which arise from the indicial roots of $L_\e$ appear with $\rho_\e^4$.) On the other hand, 
$X_\e\in C^{2,\alpha}_1(M_\e)$ implies that its component functions in background
coordinates are also $O(\rho_\e^2)$. The  
uniqueness part of Theorem 6.3.10 
in \cite{piot} implies that 
$X_\e=X^{\mathrm{phg}}_\e$. It now follows easily 
that $\nu_\e$ is polyhomogeneous and that is has an asymptotic expansion starting with $\rho_\e^{-1}$ and the first log term appearing with $\rho_\e^s$, $s>0$. Thus the extension of $\rho_\e^2\nu_\e$ 
to $\Mbar_\e$ is actually of class $C^2$ and vanishes on the ideal boundary,
which is just what is needed for $\nu_\e$ to be the traceless part
of the second fundamental form for a polyhomogeneous AH initial data set
(see Definition \ref{def:phg-dataset}).
\end{proof}

\section{The Lichnerowicz Equation and Conformal Deformation to the Spliced Solutions of the Constraint Equations}\label{lish}

Thus far, starting with a set of asymptotically hyperbolic, polyhomogeoneous, constant mean curvature initial data $(M,g,K)$ satisfying the Einstein constraint equations, together with a pair of points $\{p_1, p_2\}$ both contained in the ideal boundary $\partial \Mbar$, we have first produced a one-parameter family of spliced data sets $(M_\e, g_\e, \mu_\e)$ which are asymptotically hyperbolic, polyhomogeneous,
CMC, and not solutions of the constraints, and we have then corrected $\mu_\e$ to a new family of symmetric tensors $\nu_\e$ which are all divergence free as well as trace free with respect to $g_\e$.  We have verified that outside of the gluing region, $\nu_\e$ approaches the original trace free part of $K$ in an appropriate sense. 

To complete our gluing construction, we will now carry out a one-parameter family  of conformal deformations  that transform the data $(M_\e, g_\e, \nu_\e, \tau=3)$ to a family of data sets $(M_\e, \psi_\e^4 g_\e, \psi_\e^{-2} \nu_\e + \psi_\e^4 g_\e )$
satisfying the desired properties of the gluing construction (including the constraint equations) for all $\e$. Following the principles of the conformal method outlined in Section \ref{sec:conf-method}, if we want the conformally transformed  data sets to satisfy the constraints, then the conformal functions
$\psi_\e$ must solve the Lichnerowicz equation (\ref{LichneroeqW=0}), which for the data $(M_\e, g_\e, \nu_\e, \tau=3)$ takes the form $\Lie_\e(\psi_\e)=0$, where
\begin{equation}\label{Leqn}
\Lie_\e(u):=\Delta_{g_\e}u-\frac{1}{8}R(g_\e) u+\frac{1}{8}|\nu_\e|_{g_\e}^2u^{-7}-\frac{3}{4}u^5.
\end{equation}
Hence, we need to do the following: prove that for each $\e$ the Lichnerowicz equation (\ref{Leqn}) does admit a 
 positive solution $\psi_\e$ which is polyhomogeneous 
and $C^2$ up to the ideal boundary,
  prove that $\psi_\e$ approaches $1$ at the ideal boundary (so that the resulting Riemannian manifold is AH) and prove that as $\e \rightarrow 0$ the solutions $\psi_\e$ approach $1$ away from the gluing region.  We carry out these proofs here.

\bigbreak

The first step in our proof that, for the data sets $(M_\e, g_\e, \nu_\e, \tau=3)$, the Lichnerowicz equation admits solutions with the desired asymptotic properties, is to estimate the extent to which the constant function $\psi_0\equiv 1$ fails to be a solution of \eqref{Leqn}. While it is relatively straightforward to show that $\Lie_\e(\psi_0)$ is an element of the weighted H{\"o}lder space $C^{0,\alpha}_1(M_\e)$, we have been unsuccessful in proving that the corresponding norm of $\Lie_\e(\psi_0)$ is ``small".  Consequently, we are able to find a solution $\psi_\e$ of the Lichnerowicz equation such that $\psi_\e-\psi_0$ ``vanishes" on $\partial\Mbar_\e$ but we are only able to obtain good estimates on $\psi_\e-\psi_0$ in $C^{2,\alpha}(M_\e)$. This, however, is sufficient to  prove our main result. 

\begin{lemma} \label{N(1)estimate}
We have $\Lie_\e(\psi_0)\in C^{0,\alpha}_1(M_\e)$ and 
$\|\Lie_\e(\psi_0)\|_{0,\alpha}=O(\sqrt{\e})$ as $\e\to 0$.
\end{lemma}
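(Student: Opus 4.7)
First, since $\Delta_{g_\e}\psi_0=0$ and $-\tfrac18 R(g_\e)-\tfrac34=-\tfrac18(R(g_\e)+6)$, the operator at $\psi_0$ reduces to
$$\mathcal{L}_\e(\psi_0) = -\tfrac{1}{8}\bigl(R(g_\e)+6\bigr) + \tfrac{1}{8}|\nu_\e|_{g_\e}^2.$$
The decisive observation is an algebraic identity on the original $(M,g,K)$: with $K=g+\mu$, $\mu$ trace-free, and $\Tr_g K=3$, we have $|K|_g^2=3+|\mu|_g^2$, so the Hamiltonian constraint \eqref{2ndeq} yields $R(g)+6=|\mu|_g^2$. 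This is what makes the two contributions to $\mathcal{L}_\e(\psi_0)$ almost cancel wherever the spliced data reduces to the original.

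On the portion of $M_\e$ on which $\pi_\e$ identifies $g_\e$ with $g$ (i.e., outside the splicing region $\Psi_\e(\A_{1/2})$ where the metric is blended), we have $\mu_\e=\chi_\e\mu$ and $\nu_\e=\chi_\e\mu+\mathcal{D}_\e X_\e$. Applying $R(g_\e)+6=|\mu|_g^2$ and expanding,
$$\mathcal{L}_\e(\psi_0)=\tfrac{1}{8}\bigl((\chi_\e^2-1)|\mu|_g^2+2\chi_\e\langle\mu,\mathcal{D}_\e X_\e\rangle_g+|\mathcal{D}_\e X_\e|_g^2\bigr).$$
The first term is supported in the ellipsoidal set where $\rho=O(\e)$, so $|\mu|_g^2=O(\rho^2)=O(\e^2)$ there. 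By Theorem \ref{thm:nu-convergence}, $\mathcal{D}_\e X_\e\in C^{1,\alpha}_1$ with norm $O(\sqrt\e)$, and $\mu\in C^{1,\alpha}_1(M)$. The product rule in weighted H\"older spaces places each summand in $C^{0,\alpha}_2\subset C^{0,\alpha}_1$ with norm $O(\sqrt\e)$; since $\rho_\e$ is bounded on $\Mbar_\e$, the unweighted $C^{0,\alpha}$ norm inherits this rate.

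On the splicing region $\Psi_\e(\A_{1/2})$ itself, $\mu_\e\equiv 0$, so $\nu_\e=\mathcal{D}_\e X_\e$ and $|\nu_\e|_{g_\e}^2\in C^{0,\alpha}_2$ has norm $O(\e)$. For the scalar-curvature contribution, Lemma \ref{scalcurv:est1} provides a uniform bound on $\|\rho_\e^{-1}(R(g_\e)+6)\circ\Psi_\e\|_{C^{0,\alpha}(\A_c)}$, which immediately yields $R(g_\e)+6\in C^{0,\alpha}_1(M_\e)$ and hence the $C^{0,\alpha}_1$-membership assertion of the lemma. To extract the unweighted smallness, note that any M\"obius chart of $M_\e$ over the splicing region is an affine reparametrization of a subset of $\A_c$ with Jacobian $y_0 I$ for some uniformly bounded $y_0$; throughout such a chart $\rho_\e=\e yF=O(\e)$, so multiplying the uniformly bounded weighted norm by $\|\rho_\e\|_{C^{0,\alpha}}=O(\e)$ gives $\|R(g_\e)+6\|_{C^{0,\alpha}}=O(\e)\subset O(\sqrt\e)$ on each chart. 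Taking the supremum over charts completes the estimate.

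The main obstacle is precisely this scalar-curvature contribution on the splicing region: Lemma \ref{scalcurv:est1} delivers only $O(1)$ boundedness, not smallness, of $\rho_\e^{-1}(R(g_\e)+6)$, which is exactly why the authors cannot close the argument in the weighted $C^{0,\alpha}_1$ norm. Dropping the weight recovers the multiplicative factor $\rho_\e=O(\e)$ that survives on the splicing region—comfortably smaller than $\sqrt\e$. The actual $O(\sqrt\e)$ rate of the lemma is therefore dictated by the cross-term $\langle\mu,\mathcal{D}_\e X_\e\rangle_g$ off the splicing region, inherited directly from Theorem \ref{thm:nu-convergence}.
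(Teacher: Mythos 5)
Your proposal is correct, and it uses the same three ingredients as the paper's proof: the Hamiltonian constraint identity $R(g)+6=|\mu|_g^2$ on the region where the spliced data coincide with the original data, the bound $\|\nu_\e-\mu_\e\|_{1,\alpha,1}=O(\sqrt{\e})$ from Theorem \ref{thm:nu-convergence}, and the uniform weighted scalar-curvature bound of Lemma \ref{scalcurv:est1} converted to unweighted smallness by the smallness of $\rho_\e$ on the relevant region. The difference is where you place the cut. The paper works chart by chart and splits at the $\sqrt{\e}$-scale: charts disjoint from $\Psi_\e\bigl(\A_{\sqrt{\e}/3}\bigr)$, where $\chi_\e\equiv1$ and the constraint cancels everything except the $O(\sqrt\e)$ correction $|\nu_\e|^2-|\mu_\e|^2$, versus charts inside $\Psi_\e\bigl(\A_{c\sqrt{\e}}\bigr)$, where $\rho_\e\le\sqrt{\e}/c$ gives $\|R(g_\e)+6\|_{C^{0,\alpha}}=O(\sqrt{\e})$ and $|\mu_\e|^2=O(\e)$. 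You instead cut at the fixed-scale metric-blending region $\Psi_\e(\A_{1/2})$, expand $|\nu_\e|^2=|\chi_\e\mu+\mathcal D_\e X_\e|^2$ explicitly in the cutoff transition zone (obtaining the sharper bound $O(\e^2)$ for the $(\chi_\e^2-1)|\mu|^2$ term, since $\chi_\e\ne1$ forces $\rho\le\sqrt3\,\e$), and on the blending region get $\rho_\e=O(\e)$ rather than $O(\sqrt\e)$; your closing diagnosis that the overall $O(\sqrt\e)$ rate comes from the cross term with $\mathcal D_\e X_\e$ is accurate. Two points to tighten, both routine and handled by estimates already in the paper: (i) the $C^{0,\alpha}$ norm is a supremum over charts, and a chart can straddle $\partial\Psi_\e(\A_{1/2})$, so neither of your two region-wise formulas holds chart-wide; the fix is to argue as the paper does by dichotomizing charts (a chart meeting $\Psi_\e(\A_{1/2})$ lies in $\Psi_\e(\A_{1/4})$, where $\rho_\e\le C\e$ and $|\mu_\e|^2_{g_\e}\le C\rho^2|\overline\mu|^2_{\bar g}=O(\e^2)$ still hold, so your ``inside'' argument goes through with $|\mu_\e|^2=O(\e^2)$ in place of $\mu_\e\equiv0$); (ii) for the term $(\chi_\e^2-1)|\mu|_g^2$ you quote only a sup bound, whereas the H\"older seminorm also needs $|d\chi_\e|_g=O(1)$ on the transition zone, which is the same computation the paper performs for $f_\e$ and for $d\bigl(|\widehat\mu_\e|_g^2\bigr)$ in the proofs of Lemma \ref{lemma:div-not-too-large} and of this lemma. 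Neither point affects the validity of your approach.
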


\begin{proof}
Note that $\psi_0\equiv 1$ yields
\begin{equation*}
\Lie_\e(\psi_0)=-\frac{1}{8}\left(R(g_\e) + 6-|\nu_\e|_{g_\e}^2\right).
\end{equation*}
The fact that $\Lie_\e(\psi_0)\in C^{0,\alpha}_1(M_\e)$ is immediate from  $|\nu_\e|^2_{g_\e}\in C^{0,\alpha}_1(M_\e)$ which is true by construction, and $(R(g_\e)+6)\in C^{0,\alpha}_1(M_\e)$ which is true by virtue of   \eqref{scalarcurv:comp} and Lemma \ref{scalcurv:est1}. 
To estimate the unweighted norm of $\Lie_\e(\psi_0)$,
let $\Phi$ be one of our preferred charts for $M_\e$ (see Section \ref{holder}). If the image of $\Phi$ is away from the gluing region, i.e., if $\Phi({\breve B})\cap \Psi_\e\bigl(\A_{\sqrt{\e}/{3}}\bigr)= \emptyset$, then
$$\|\Phi^* \Lie_\e(\psi_0)\|_{C^{0,\alpha}({\breve B})}=\tfrac{1}{8}\|\Phi^*(|\nu_\e|_{g_\e}^2-|\mu_\e|_{g_\e}^2)\|_{C^{0,\alpha}({\breve B})}=O(\sqrt{\e})$$
as a consequence of the second constraint equation $R(g)-|K|_g^2+\tau^2=R(g)-|\mu|_g^2+6=0$ and (\ref{main-mu}). Thus, it remains to study the charts $\Phi$  for which $$\Phi({\breve B})\subseteq \Psi_\e\big(\A_{c\sqrt{\e}}\big),$$
where $c>0$ is some sufficiently small fixed number. We start with the inequality
$$\|\Phi^* \Lie_\e(\psi_0)\|_{C^{0,\alpha}({\breve B})}\le \tfrac{1}{8}\|\Phi^* (R(g_\e)+6)\|_{C^{0,\alpha}({\breve B})}+O(\sqrt{\e})+\tfrac{1}{8}\|\Phi^* |\mu_\e|_{g_\e}^2\|_{C^{0,\alpha}({\breve B})}.$$
It follows from  \eqref{scalarcurv:comp} and Lemma \ref{scalcurv:est1} that 
$\|R(g_\e)+6\|_{0,\alpha,1}$
is bounded uniformly in $\e$.
The uniformity properties \eqref{unif-rho-est} and the fact that $\rho_\e\le \sqrt{\e}/c$ on $\Psi_\e\left(\mathcal{A}_{c\sqrt{\e}}\right)$  imply that 
\begin{equation}\label{scalcurv:est2}
\|\Phi^* (R(g_\e)+6)\|_{C^{0,\alpha}({\breve B})}=O(\sqrt{\e}).
\end{equation}
To understand  the $\mu_\e$-term, note that $\pi_\e$ is a diffeomorphism on the support of $\widehat \mu_\e$, where we also have
$$\big|\widehat \mu_\e\big|_g^2\le \big|\mu\big|_g^2=\rho^2\big|\overline{\mu}\big|_{\bar{g}}^2.$$
It then follows that $\displaystyle{\sup_{\breve B}}\left|\Phi^* |\mu_\e|_{g_\e}^2\right|=O(\e)$. Likewise,
$$
\big|d(|\widehat \mu_\e|_g^2)\big|_g=\rho\big|d(\rho^2\chi^2|\overline \mu|_{\bar{g}}^2)\big|_{\bar g}
\le \rho\Big(2\rho|d\rho|_{\bar g}|\overline \mu|_{\bar{g}}^2+2\rho^2|d\chi|_{\bar g}|\overline \mu|_{\bar{g}}^2+\rho^2|d(|\overline \mu|_{\bar{g}}^2)|_{\bar g}\Big)=O(\rho^2)$$
as a consequence of the  boundedness of $|d\rho|_{\bar g}$ and the fact that $\rho^2|d\chi|_{\bar g}=\rho O(\e)O(\tfrac{1}{\e})=O(\rho)$ on the support of $d\chi$. Overall, we see that
$$
\|\Phi^* |\mu_\e|_{g_\e}^2\|_{C^{0,\alpha}({\breve B})}=O(\e)
$$
and therefore $\|\Phi^* \Lie_\e(\psi_0)\|_{C^{0,\alpha}({\breve B})}=O(\sqrt{\e})$ as $\e\to 0$.
\end{proof}

It should also be pointed out that $|\nu_\e|^2_{g_\e}=R(g_\e)+6+8\Lie_\e(\psi_0)$ implies that \begin{equation}\label{nu:est}
\left\||\nu_\e|^2_{g_\e}\right\|_{0,\alpha}\le C
\end{equation}
for some $C>0$.

\bigbreak

The main ingredient in our study of the solvability and the solutions of the Lichnerowicz equation is  the uniform invertibility of the linearizations
$$\linP_\e:=\Delta_{g_\e}-\frac{1}{8}\Big(R(g_\e)+7|\nu_\e|^2_{g_\e}+30\Big)$$
of $\Lie_\e$ at $\psi_0=1$. 
In what follows we rely heavily on maximum principle(s). Part of the reason why this approach is successful is that the function 
$$f_\e:=\frac{1}{8}\Big(R(g_\e)+7|\nu_\e|^2_{g_\e}+30\Big)$$
has a positive lower bound. 

\begin{lemma}\label{fest:lemma}
Let  $C<3$ be a positive constant and let $\e$ be sufficiently small. We have
$$f_\e\ge C, \text{\ \ pointwise}.$$ 
\end{lemma}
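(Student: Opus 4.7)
The plan is to decompose
$$f_\e = 3 + \tfrac{1}{8}\bigl(R(g_\e)+6\bigr) + \tfrac{7}{8}|\nu_\e|_{g_\e}^2$$
and observe that, because the last term is nonnegative, it suffices to establish a uniform lower bound of the form $R(g_\e)+6 \ge -C_0\,\e$ on all of $M_\e$. Once such a bound is in hand, for any prescribed $C<3$ one just chooses $\e$ small enough that $C_0\e/8 < 3-C$.

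To produce the lower bound I would partition $M_\e$ according to where splicing actually modifies the metric. First, away from the neck the projection $\pi_\e$ is a diffeomorphism intertwining $g$ and $g_\e$, so $R(g_\e)+6 = R(g)+6$. Applying the Hamiltonian constraint \eqref{2ndeq} to the original CMC data, with $\tau = 3$, $K = g+\mu$, and $\mu$ traceless, yields $|K|_g^2 = 3+|\mu|_g^2$ and hence $R(g) = |\mu|_g^2 - 6$; consequently $R(g_\e)+6 = |\mu|_g^2 \ge 0$ on this region. Next, on the outer parts of the neck where the splicing formula \eqref{def-g} collapses to $\Psi_\e^*g_\e = \alpha_{\e,1}^*g$ (for $r\ge 2$) or $\Psi_\e^*g_\e = \beta_\e^*g$ (for $r\le 1/2$), scalar curvature is intrinsic, so the same identity pulls back to give $R(g_\e)+6 \ge 0$ on $\Psi_\e\bigl(\{r\ge 2\}\cup\{r\le 1/2\}\bigr)$.

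The only region where the metric is a genuine blend is therefore $\Psi_\e(\{1/2 < r < 2\})\subset \Psi_\e(\A_{1/3})$. On $\A_{1/3}$ both $y$ and $F(r)$ are bounded, so $\rho_\e = \e\,y\,F(r) \le C\e$ uniformly. Lemma~\ref{scalcurv:est1} with $c = 1/3$ then gives $|R(g_\e)+6|\le C'\rho_\e \le C''\e$ on this window, so in particular $R(g_\e)+6 \ge -C''\e$. Combining the three cases produces $R(g_\e)+6 \ge -C''\e$ uniformly on $M_\e$, which closes the argument.

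The one point requiring care is to match the splicing "transition zone" to a set $\A_c$ on which the uniform scalar-curvature estimate of Lemma~\ref{scalcurv:est1} is available; this is handled by the explicit support properties of the cutoff $\varphi$ in Lemma~\ref{cutoff1}, which confine the nontrivial mixing to $\{1/2 \le r \le 2\}$. Beyond that the argument is a straightforward case analysis, and in particular no information about $\nu_\e$ beyond the trivial sign of $|\nu_\e|_{g_\e}^2$ is used.
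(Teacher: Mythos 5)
Your proposal is correct, but it follows a genuinely different (and lighter) route than the paper. The paper proves the stronger uniform statement that $\sup_{M_\e}\bigl|R(g_\e)-|\nu_\e|^2_{g_\e}+6\bigr|\to 0$ as $\e\to 0$, i.e.\ that $f_\e-|\nu_\e|^2_{g_\e}\to 3$ uniformly; it does this by splitting $M_\e$ at the gluing region $\Psi_\e\bigl(\A_{\sqrt{\e}/3}\bigr)$, using the pointwise bound $|\mu_\e|^2_{g_\e}\le c\e$ there together with the correction estimate \eqref{main-mu} from Theorem \ref{thm:nu-convergence} to kill $|\nu_\e|^2_{g_\e}$ near the neck, the estimate \eqref{scalcurv:est2} for $R(g_\e)+6$, and the Hamiltonian constraint plus \eqref{main-mu} again on the complement. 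You instead discard the nonnegative term $\tfrac78|\nu_\e|^2_{g_\e}$ outright and reduce the lemma to the lower bound $R(g_\e)+6\ge -C_0\e$, which you obtain with no input about $\nu_\e$ at all: off the transition annulus $\{1/2<r<2\}$ the spliced metric is locally isometric to $g$ (either via $\pi_\e$ away from the neck or via $\alpha_{\e,1}$, $\beta_\e$ on the outer neck), so the constraint with $\tau=3$ gives $R(g_\e)+6=|\mu|^2_g\ge 0$ there, while on the fixed annulus $\A_{1/3}$ Lemma \ref{scalcurv:est1} applies with fixed $c$ and $\rho_\e=\e yF(r)\le C\e$, giving $|R(g_\e)+6|\le C''\e$. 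Your version buys simplicity and independence from the $\nu_\e$-correction machinery (it would work even before Theorem \ref{thm:nu-convergence} is proved), and it directly yields $f_\e\ge 3-O(\e)$; what the paper's argument buys in exchange is the sharper information that $f_\e-|\nu_\e|^2_{g_\e}-3$ tends to zero uniformly, including uniform smallness of $|\nu_\e|^2_{g_\e}$ on the gluing region, in the same spirit as Lemma \ref{N(1)estimate}. Both arguments are sound, and your case analysis does cover all of $M_\e$ since the borderline spheres $r=2$ and $r=1/2$ lie inside $\A_{1/3}$ where the curvature estimate holds.
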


\begin{proof}
It is enough to show 
\begin{equation}\label{f:est}
\sup_{M_\e}\left|f_\e-|\nu_\e|^2_{g_\e}-3\right|\to 0 \text{\ \ as\ \ }\e\to 0
\end{equation}
or, equivalently, that the sup-norms of $\left|R(g_\e)-|\nu_\e|^2_{g_\e}+6\right|$
both over the gluing region $\Psi_\e\bigl(\A_{\sqrt{\e}/3}\bigr)$ and over its complement $M_\e\smallsetminus \Psi_\e\bigl(\A_{\sqrt{\e}/3}\bigr)$
converge to $0$.
To prove this convergence on $\Psi_\e\bigl(\A_{\sqrt{\e}/3}\bigr)$, note that $\pi_\e$ maps diffeomorphically onto the support of $\mu_\e$, and that $$\left|\mu_\e\right|^2_{g_\e}\le \left(\rho^2 \left|\overline \mu\right|^2_{\bar g}\right)\circ\pi_\e^{-1}.$$ 
Thus, there is a constant $c>0$ such that 
$\left|\mu_\e\right|^2_{g_\e}\le c\e$ on $\Psi_\e\bigl(\A_{\sqrt{\e}/3}\bigr)$. In light of \eqref{main-mu} this means that 
$$\sup_{\Psi_\e\left(\A_{\sqrt{\e}/3}\right)}\left|\nu_\e\right|^2_{g_\e}\to 0 \text{\ \ as\ \ }\e\to 0.$$ The convergence result \eqref{f:est} on the gluing region now follows from Lemma \ref{scalcurv:est1} or, rather, estimate \eqref{scalcurv:est2}.
The convergence away from the gluing region
is an easy consequence of the fact that the restriction of $f_\e$ to $M_\e\smallsetminus \Psi_\e\bigl(\A_{\sqrt{\e}/3}\bigr)$ satisfies
$$\left(f_\e-|\nu_\e|^2_{g_\e}\right)\circ \pi_\e=\tfrac{1}{8}\left(R(g)+7\left|(\pi_\e)^*\nu_\e\right|^2_g+30\right)-\left|(\pi_\e)^*\nu_\e\right|^2_g=3+\tfrac{1}{8}\left(|\mu|^2_g-\left|(\pi_\e)^*\nu_\e\right|^2_g\right)=3+O(\sqrt{\e})$$ by virtue of \eqref{main-mu} and the second constraint equation 
$R(g)-|K|^2_g+9=R(g)-|\mu|^2_g+6=0$.
\end{proof}

Strictly speaking, the operators $\linP_\e$ are not ``geometric'' due to the presence of the $|\nu_\e|^2_{g_\e}$ term, so
the analysis of \cite{lee} does not apply directly.  There are many ways to 
circumvent this; for convenience, we will base our argument on 
Proposition 3.7 of \cite{GL}.  
First we need the following uniform estimate (called the ``basic estimate'' in \cite{GL}).
This estimate is analogous to 
the estimate of  Lemma \ref{vectLapl:lemma} above
for the vector Laplacian.  
The proofs of the two lemmas, however, are quite different: Lemma \ref{vectLapl:lemma} is proved using blow-up analysis, while the proof of the next lemma is direct and constructive. Consequently, the next lemma features a more optimal result on $C$ (as opposed to the vector Laplacian case where we are only able to prove the existence of $C$).

\begin{lemma}\label{LinLichnLemma}
Let $C>\tfrac{1}{3}$ be fixed, and assume $\e>0$ is sufficiently small.
\begin{enumerate}
\item If $u$ is a $C^2$ function on $M_\e$ with both $\rho_\e^{-1}u$ and $\rho_\e^{-1}\linP_\e u$ bounded then 
\begin{equation}\label{eq:linP-global-sup-est1}
\sup_{M_\e}|\rho_\e^{-1}u|\le C\sup_{M_\e}|\rho_\e^{-1}\linP_\e u|.
\end{equation}
\item If $u$ is a bounded $C^2$ function on $M_\e$ with $\linP_\e u$ bounded,
then
\begin{equation}\label{eq:linP-global-sup-est}
\sup_{M_\e}|u|\le C\sup_{M_\e}|\linP_\e u|.
\end{equation}
\end{enumerate}
Similarly, if $\Omega$ is a precompact subset of $M_\e$, and $u$ is a continuous function on $\overline\Omega$ that is $C^2$ in $\Omega$ and vanishes on
$\partial\Omega$,
then 
\begin{equation}\label{eq:linP-local-sup-est}
\sup_{\Omega}|\rho_\e^{-1}u|\le C\sup_{\Omega}|\rho_\e^{-1}\linP_\e u|, \text{\ \ and\ \ } \sup_{\Omega}|u|\le C\sup_{\Omega}|\linP_\e u|.
\end{equation}
\end{lemma}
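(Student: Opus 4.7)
Set $\linP_\e=\Delta_{g_\e}-f_\e$. Given $C>1/3$, I fix $C'\in(1/C,3)$; Lemma \ref{fest:lemma} then forces $f_\e\ge C'$ on all of $M_\e$ for sufficiently small $\e$, and $1/C'\le C$ will be the effective constant in every estimate below. Because $f_\e$ is strictly positive, the classical maximum principle applies to $\linP_\e$: any $w$ with $\linP_\e w\ge 0$ can have no positive interior maximum (at such a point $\Delta_{g_\e}w\le 0$ and $f_\e w>0$ would contradict $\Delta_{g_\e}w\ge f_\e w$).

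The local estimates follow from this principle directly. For $\sup_\Omega|u|\le C\sup_\Omega|\linP_\e u|$, at a positive interior maximum of $u$ one has $\linP_\e u\le -f_\e u\le -C'u$. For the weighted local version, set $v:=\rho_\e^{-1}u$; since $\rho_\e$ is bounded below on the precompact set $\overline\Omega$, $v$ extends continuously to $\overline\Omega$ with $v|_{\partial\Omega}=0$. Expanding $\rho_\e^{-1}\linP_\e(\rho_\e v)$ yields
\begin{equation*}
\Delta_{g_\e}v+2\rho_\e^{-1}\langle d\rho_\e,dv\rangle_{g_\e}+(\rho_\e^{-1}\Delta_{g_\e}\rho_\e-f_\e)v=\rho_\e^{-1}\linP_\e u;
\end{equation*}
Lemma \ref{LaplaceRho} makes the zeroth-order coefficient at most $-f_\e\le-C'$, and the max-principle argument at a positive interior maximum of $v$ (where the gradient term drops out) delivers the bound.

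For the global weighted statement (part (i)), I use the barrier $A\rho_\e$ with $A:=\sup_{M_\e}|\rho_\e^{-1}\linP_\e u|/C'$. The inequality $\linP_\e\rho_\e=\Delta_{g_\e}\rho_\e-f_\e\rho_\e\le-C'\rho_\e$ (from Lemmas \ref{LaplaceRho} and \ref{fest:lemma}) gives $\linP_\e(\pm u-A\rho_\e)\ge-|\linP_\e u|+AC'\rho_\e\ge 0$. On the compact exhaustion $\Omega_n:=\{\rho_\e\ge 1/n\}$ the weak maximum principle forces any positive supremum of $\pm u-A\rho_\e$ to occur on $\partial\Omega_n$; there the hypothesis $|u|\le B\rho_\e=B/n$ (with $B:=\sup|\rho_\e^{-1}u|$) yields $\pm u-A\rho_\e\le(B-A)/n$, hence the same bound on all of $\Omega_n$. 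Sending $n\to\infty$ at each fixed point of $M_\e$ produces $\pm u\le A\rho_\e$ pointwise, so $\sup|\rho_\e^{-1}u|\le A\le C\sup|\rho_\e^{-1}\linP_\e u|$.

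The main obstacle is the global unweighted statement (part (ii)), since $u$ need not decay at $\partial\Mbar_\e$ and the previous exhaustion therefore produces no useful data on $\partial\Omega_n$. To resolve this I would introduce an auxiliary barrier that diverges at the ideal boundary: set $w_\pm:=\pm u-M_F/C'-\delta\rho_\e^{-\eta}$ with $M_F:=\sup|\linP_\e u|$ and $\delta,\eta>0$ small. From
\begin{equation*}
\linP_\e(\rho_\e^{-\eta})=\rho_\e^{-\eta}\bigl[-\eta\rho_\e^{-1}\Delta_{g_\e}\rho_\e+\eta(\eta+1)\rho_\e^{-2}|d\rho_\e|^2_{g_\e}-f_\e\bigr],
\end{equation*}
combined with Lemma \ref{LaplaceRho} and the boundedness (for each fixed $\e$) of $\rho_\e^{-1}\Delta_{g_\e}\rho_\e$ and $\rho_\e^{-2}|d\rho_\e|^2_{g_\e}$, I choose $\eta=\eta(\e)>0$ so small that the bracket is at most $-C'/2$; then $\linP_\e(\rho_\e^{-\eta})\le-(C'/2)\rho_\e^{-\eta}$, whence $\linP_\e w_\pm\ge(C'\delta/2)\rho_\e^{-\eta}>0$ strictly on $M_\e$. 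Since $w_\pm\to-\infty$ at $\partial\Mbar_\e$ while $\pm u$ stays bounded, $w_\pm$ attains its supremum at an interior point, and a positive value there would contradict the strict inequality. Therefore $\pm u\le M_F/C'+\delta\rho_\e^{-\eta}$, and letting $\delta\to 0$ at each fixed point of $M_\e$ yields $\pm u\le M_F/C'$; the $\e$-dependence of $\eta$ is immaterial since the final constant $1/C'\le C$ is uniform.
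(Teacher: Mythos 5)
Your proof is correct, but it reaches the two global estimates by a genuinely different route than the paper. The paper invokes Yau's Generalized Maximum Principle (quoted from \cite{GL}): it takes a sequence $x_k$ along which $\rho_\e^{-1}u$ (resp.\ $u$) approaches its supremum with $|d(\rho_\e^{-1}u)|_{g_\e}(x_k)\to 0$ and $\limsup\Delta_{g_\e}(\rho_\e^{-1}u)(x_k)\le 0$, and then concludes directly from the identity for $\Delta_{g_\e}(\rho_\e^{-1}u)$, the bound on $|d\rho_\e/\rho_\e|_{g_\e}$ (Lemma \ref{defnfnc}), the superharmonicity $\Delta_{g_\e}\rho_\e\le 0$ (Lemma \ref{LaplaceRho}) and $f_\e\ge 1/C$ (Lemma \ref{fest:lemma}); the unweighted global bound is then a one-line variant, and the local bounds use the ordinary maximum principle, as you do. You instead dispense with Yau's principle entirely: for \eqref{eq:linP-global-sup-est1} you compare $\pm u$ with the supersolution $A\rho_\e$ on the compact exhaustion $\{\rho_\e\ge 1/n\}$, feeding the decay hypothesis $|u|\le B\rho_\e$ into the boundary term and letting $n\to\infty$; for \eqref{eq:linP-global-sup-est} you add the blow-up barrier $\delta\rho_\e^{-\eta}$ so that the supremum is attained in the interior, and let $\delta\to 0$. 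Both arguments rest on the same three structural inputs (Lemmas \ref{fest:lemma}, \ref{LaplaceRho}, \ref{defnfnc}), and your bookkeeping of the constant ($f_\e\ge C'$ with $C'\in(1/C,3)$, final constant $1/C'<C$) matches the paper's. What your route buys is self-containedness: only the classical maximum principle on compact sets is used, with the noncompactness handled by explicit barriers. The price is that the unweighted case needs, for each fixed $\e$, upper bounds on $-\rho_\e^{-1}\Delta_{g_\e}\rho_\e$ and $\rho_\e^{-2}|d\rho_\e|^2_{g_\e}$; these are not part of the statement of Lemma \ref{LaplaceRho}, but they do follow from its proof (where $\Delta_{g_\e}\rho_\e/\rho_\e$ is shown to stay close to $-1+\tfrac{y^2}{r^2}\tfrac{3rF'+r^2F''}{F}$ on the neck and to agree with $\Delta_g\rho/\rho$ elsewhere) together with Lemma \ref{defnfnc} and continuity on compact subsets, so this is a legitimate, if slightly heavier, dependence — and, as you note, the $\e$-dependence of $\eta$ is harmless since it disappears from the final constant. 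One cosmetic point: in the exhaustion step the weak maximum principle gives $\sup_{\Omega_n}(\pm u-A\rho_\e)\le\max\{0,(B-A)/n\}$ rather than $(B-A)/n$ itself, but the limit $n\to\infty$ yields $\pm u\le A\rho_\e$ either way, so the conclusion stands.
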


\begin{proof} We start by proving \eqref{eq:linP-global-sup-est1}. Note that it suffices to consider functions $u$ for which 
$$\sup_{M_\e}|\rho_\e^{-1} u| = \sup_{M_\e}\left(\rho_\e^{-1} u\right).$$
Given a fixed $\e>0$ and a $C^2$-function $u\in C^{0,0}_1(M_\e)$,  Yau's Generalized Maximum Principle \cite{GL} implies that  there is a sequence of points $\{x_k\}$ of $M_\e$ such that 
\begin{enumerate}\romanletters
\item $\displaystyle{\lim_{k\to \infty}} \big[\rho_\e^{-1} u\big](x_k)=\sup_{M_\e} \big[\rho_\e^{-1} u\big]$
\item $\displaystyle{\lim_{k\to \infty}} \big|d(\rho_\e^{-1} u)\big|_{g_\e}(x_k)=0$
\item $\displaystyle{\limsup_{k\to \infty}}\  \Delta_{g_\e} \big[\rho_\e^{-1}u\big](x_k)\le 0$.
\end{enumerate}
Note that the condition (ii) can be re-written as
\begin{equation}\label{lim-du}
\displaystyle{\lim_{k\to \infty}} \Big|\rho_\e^{-1}du-\rho_\e^{-1}u\frac{d\rho_\e}{\rho_\e}\Big|_{g_\e}(x_k) =0.
\end{equation}
A short computation shows that 
\begin{equation}\label{laplace:comp}
\Delta_{g_\e}\big[\rho_\e^{-1}u\big]=\rho_\e^{-1}\Delta_{g_\e}u-2\<\frac{d\rho_\e}{\rho_\e}, \rho_\e^{-1}du-\rho_\e^{-1}u\frac{d\rho_\e}{\rho_\e}\>-\rho_\e^{-1}u\frac{\Delta_{g_\e} \rho_\e}{\rho_\e}.
\end{equation}
Recall that, by Lemma \ref{defnfnc}, the quantity $\Big|\frac{d\rho_\e}{\rho_\e}\Big|_{g_\e}$ is bounded for each fixed $\e>0$. Therefore, 
the identity (\ref{lim-du}) implies
\begin{equation}\label{limsup}
\displaystyle{\limsup_{k\to\infty}}\Big[\rho_\e^{-1}\Delta_{g_\e}u-\rho_\e^{-1}u\frac{\Delta_{g_\e} \rho_\e}{\rho_\e}\Big](x_k)=\displaystyle{\limsup_{k\to\infty}}\  \Delta_{g_\e}\big[\rho_\e^{-1}u\big](x_k)\le 0. 
\end{equation}
Since the defining functions $\rho_\e$ for small $\e>0$ are superharmonic (Lemma \ref{LaplaceRho}) and since $f_\e\ge \tfrac{1}{C}$ by Lemma \ref{fest:lemma}, we see that   
$$\rho_\e^{-1}\Delta_{g_\e}u-\rho_\e^{-1}u\frac{\Delta_{g_\e} \rho_\e}{\rho_\e}= \rho_\e^{-1}\linP_\e u+\rho_\e^{-1}\Big[f_\e-\frac{\Delta_{g_\e} \rho_\e}{\rho_\e}\Big]\ge \rho_\e^{-1}\linP_\e u +\tfrac{1}{C}\rho_\e^{-1}u.$$
Conditions (i) and \eqref{limsup} now imply
$$\displaystyle{\limsup_{k\to+\infty}}\big[\rho_\e^{-1}\linP_\e u\big](x_k) +\tfrac{1}{C}\sup_{M_\e} \big[\rho_\e^{-1}u\big]\le 0$$
for small enough $\e$.
Consequently, we have 
$$\sup_{M_\e} \big[\rho_\e^{-1}u\big]\le C\ \displaystyle{\liminf_{k\to +\infty}} \big[-\rho_\e^{-1}\linP_\e u\big](x_k)\le C \sup_{M_\e}|\rho_\e^{-1} \linP_\e u|,$$
as claimed. The proofs of the remaining three estimates are similar but considerably easier. Indeed, to prove \eqref{eq:linP-global-sup-est} we use $$\Delta_{g_\e}u(x_k)=\linP_\e u(x_k)+f_\e(x_k) u(x_k)\ge \linP_\e u(x_k)+\tfrac{1}{C}u(x_k)$$
in place of \eqref{laplace:comp},
while \eqref{eq:linP-local-sup-est} is proved using the ordinary maximum principle.
\end{proof}

\begin{theorem}\label{linLichn}
The operators $\linP_\e:C^{2,\alpha}_1(M_\e)\to C^{0,\alpha}_1(M_\e)$ and $\linP_\e:C^{2,\alpha}(M_\e)\to C^{0,\alpha}(M_\e)$ are invertible for sufficiently small $\e>0$. The norm of the  inverse of 
$\linP_\e:C^{2,\alpha}(M_\e)\to C^{0,\alpha}(M_\e)$
is bounded uniformly in $\e$.
\end{theorem}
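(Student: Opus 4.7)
The plan has two parts: invertibility at each fixed $\e>0$, and the $\e$-uniform bound on the inverse on the unweighted spaces.

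\textbf{Invertibility at each $\e$.} For each fixed $\e$, $\linP_\e = \Delta_{g_\e} - f_\e$ is a uniformly elliptic, formally self-adjoint operator whose zeroth-order coefficient satisfies $f_\e \ge C>0$ (Lemma \ref{fest:lemma}). The basic sup-norm estimates of Lemma \ref{LinLichnLemma} show immediately that $\linP_\e$ has trivial kernel on both $C^{2,\alpha}_1(M_\e)$ and $C^{2,\alpha}(M_\e)$. The argument of \cite[Proposition 3.7]{GL} for the conformal Laplacian on an asymptotically hyperbolic manifold then adapts essentially verbatim: the only structural inputs required are positivity of the zeroth-order coefficient and the basic sup-norm estimates, both of which we have. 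This produces the required inverses on both spaces.

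\textbf{Uniform inverse estimate on $C^{2,\alpha}(M_\e)$.} Decompose $\linP_\e u = \Delta_{g_\e} u - f_\e u$. Since $\Delta_{g_\e}$ is geometric, Proposition \ref{uni-reg} gives
\begin{equation*}
\|u\|_{2,\alpha} \le C\bigl(\|\Delta_{g_\e} u\|_{0,\alpha} + \|u\|_{0,0}\bigr)
\end{equation*}
with $C$ independent of $\e$. The estimate \eqref{nu:est} bounds $\||\nu_\e|^2_{g_\e}\|_{0,\alpha}$ uniformly, and Lemma \ref{scalcurv:est1} together with the boundedness of $\rho_\e$ gives a uniform bound on $\|R(g_\e)+6\|_{0,\alpha}$; together these imply $\|f_\e\|_{0,\alpha} \le C$ uniformly in $\e$. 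Hence
\begin{equation*}
\|u\|_{2,\alpha} \le C\bigl(\|\linP_\e u\|_{0,\alpha} + \|u\|_{0,\alpha} + \|u\|_{0,0}\bigr).
\end{equation*}
An Ehrling-type interpolation $\|u\|_{0,\alpha} \le \delta\|u\|_{2,\alpha} + C_\delta\|u\|_{0,0}$, which holds with constants independent of $\e$ via the M\"obius chart framework of Section \ref{holder}, lets us absorb the middle term into the left-hand side, giving
\begin{equation*}
\|u\|_{2,\alpha} \le C\bigl(\|\linP_\e u\|_{0,\alpha} + \|u\|_{0,0}\bigr).
\end{equation*}
Finally, Lemma \ref{LinLichnLemma}(2) yields $\|u\|_{0,0} \le C\|\linP_\e u\|_{0,0} \le C\|\linP_\e u\|_{0,\alpha}$, and combining the two estimates produces the desired uniform bound $\|u\|_{2,\alpha} \le C\|\linP_\e u\|_{0,\alpha}$.

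\textbf{Main obstacle.} The principal subtlety is that $\linP_\e$ is not a geometric operator (as noted in the paragraph just before Lemma \ref{LinLichnLemma}), so Proposition \ref{uni-reg} does not apply to $\linP_\e$ directly. The remedy is the decomposition into the geometric operator $\Delta_{g_\e}$ plus the multiplication $-f_\e$, but this demands $\e$-uniform $C^{0,\alpha}$ control on $f_\e$, which is exactly what \eqref{nu:est} and Lemma \ref{scalcurv:est1} provide. Once this uniform control on the lower-order coefficient is in hand, the maximum-principle estimates of Lemma \ref{LinLichnLemma} do the rest of the work, and nothing more delicate is needed.
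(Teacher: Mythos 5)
Your proposal is correct, and its overall skeleton coincides with the paper's: invertibility is obtained by feeding the sup-norm estimates of Lemma \ref{LinLichnLemma} into Proposition 3.7 of \cite{GL}, and the uniform bound follows by combining an $\e$-uniform Schauder-type estimate $\|u\|_{2,\alpha}\le C(\|\linP_\e u\|_{0,\alpha}+\|u\|_{0,0})$ with the sup estimate $\|u\|_{0,0}\le C\|\linP_\e u\|_{0,0}$. The one place where you proceed differently is the derivation of that uniform Schauder estimate. The paper handles the non-geometric nature of $\linP_\e$ head-on: it writes the operator in each preferred chart as $\linP_{\Phi,\e}=\Delta_{\Phi^*g_\e}-\Phi^*\bigl(3+|\nu_\e|^2_{g_\e}-\Lie_\e(\psi_0)\bigr)$, checks via \eqref{nu:est} and Lemma \ref{N(1)estimate} that the coefficients are uniformly controlled in $C^{0,\alpha}(\breve B)$, and invokes the standard interior estimates of \cite{GT} on the fixed ball before taking the supremum over charts. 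You instead reuse Proposition \ref{uni-reg} for the geometric operator $\Delta_{g_\e}$, treat $-f_\e u$ as a zeroth-order perturbation with $\|f_\e\|_{0,\alpha}$ uniformly bounded, and absorb the resulting $\|u\|_{0,\alpha}$ term by an Ehrling interpolation applied chart by chart. Both routes need exactly the same coefficient bounds (\eqref{nu:est} together with the uniform control of $R(g_\e)+6$, which requires Lemma \ref{scalcurv:est1} on the neck, the expansion \eqref{scalarcurv:comp} away from it, and the uniform $C^{0,\alpha}$ bound on $\rho_\e$ from \eqref{unif-rho-est} to pass from the weighted to the unweighted norm — your citation of ``boundedness of $\rho_\e$'' should be read in that slightly stronger sense). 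What each buys: the paper's chart-level argument avoids any interpolation inequality and makes the dependence on the coefficients completely explicit, while yours avoids re-invoking \cite{GT} directly by leveraging the already-proved uniform estimate for geometric operators, at the cost of verifying that the interpolation and algebra properties of the intrinsic H\"older norms hold with constants independent of $\e$ (which they do, since the M\"obius/preferred charts use a fixed ball $\breve B$). Either way the final assembly with Lemma \ref{LinLichnLemma}(2) is identical, so your argument is a sound variant of the paper's proof.
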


\begin{proof}
It follows from Proposition 3.7 in \cite{GL} (together with Lemma \ref{LinLichnLemma})
that $\linP_\e$ is invertible when $\e$ is small enough,
so it remains only to prove uniformity of the norm.
We start by establishing a uniform elliptic estimate
\begin{equation}\label{Lichn:ellest}
\|u\|_{2,\alpha}\le C\left(\|\linP_\e u\|_{0,\alpha}+\|u\|_{0,0}\right)
\end{equation}
in which $C$ is independent of (sufficiently small) $\e>0$. 
Let $\Phi$ be one of our preferred charts for $M_\e$ (see section \ref{holder}). Consider the elliptic operator $\linP_{\Phi,\e}:C^{2,\alpha}(\breve B)\to C^{0,\alpha}(\breve B)$ defined by
$$\linP_{\Phi,\e}:=\Delta_{\Phi^* g_\e} - \Phi^*\Big(3+|\nu_\e|^2_{g_\e}-\Lie_\e(\psi_0)\Big);$$
this operator is of interest since
$$\Phi^*\left(\linP_\e u\right)=\linP_{\Phi,\e}\Phi^*u.$$
Recall that the metric $\Phi^* g_\e$ is uniformly equivalent to the hyperbolic metric $\breve g$. Furthermore, we see from Lemma \ref{N(1)estimate} and \eqref{nu:est} that the $C^{0,\alpha}(\breve B)$-norms of $\Phi^*|\nu_\e|^2_{g_\e}$ and $\Phi^* \Lie_\e(\psi_0)$ are uniformly bounded. Thus, the eigenvalues of the principal symbol of $\linP_{\Phi,\e}$ are uniformly bounded from below, while the $C^{0,\alpha}(\breve B)$-norms of the coefficients of $\linP_{\Phi,\e}$ are uniformly bounded from above. Let $\breve B_0$ be a fixed precompact subset of $\breve B$  such that  the restrictions of our preferred charts to $\breve B_0$ still cover $M_\e$.
It follows from the standard elliptic theory \cite{GT} that there is a constant $C$ (independent of $\e$, $\Phi$ and $u$) such that
$$\|\Phi^* u\|_{C^{2,\alpha}(\breve B_0)}\le C\left(\|\linP_{\Phi,\e} \Phi^*u\|_{C^{2,\alpha}(\breve B)}+\|\Phi^*u\|_{C^{0,0}(\breve B)}\right).$$
Taking the supremum with respect to $\Phi$ now yields \eqref{Lichn:ellest}.

Next, we combine Lemma \ref{LinLichnLemma} and the elliptic estimate \eqref{Lichn:ellest}.
We conclude that there is a constant $C$ (independent of $\e$) such that
$$\|u\|_{2,\alpha}\le C \|\linP_\e u\|_{0,\alpha}$$
for all  $u\in C^{2,\alpha}(M_\e)$.  This shows that the norm of $\linP_\e^{-1}:C^{0,\alpha}(M_\e)\to C^{2,\alpha}(M_\e)$ is
bounded independently of $\e$.
\end{proof}

We solve the Lichnerowicz equation by interpreting it as a fixed point problem. More precisely, consider the quadratic error term
$$
\Q_\e(\eta):=\ \Lie_\e(\psi_0+\eta)-\Lie_\e(\psi_0)-\linP_\e\eta
=\frac{1}{8}|\nu_\e|^2\Big((1+\eta)^{-7}-1+7\eta\Big)-\frac{3}{4}\Big((1+\eta)^5-1-5\eta\Big)
$$
and the corresponding map
$$\mathcal{G}_\e:\eta\mapsto -(\linP_\e)^{-1}\Big(\Lie_\e(\psi_0)+\Q_\e(\eta)\Big).$$
Note that, by Lemma \ref{N(1)estimate} and Theorem \ref{linLichn},
$$\mathcal{G}_\e:C^{2,\alpha}(M_\e)\to C^{2,\alpha}(M_\e) \text{\ \ and\ \ } \mathcal{G}_\e:C^{2,\alpha}_1(M_\e)\to C^{2,\alpha}_1(M_\e).$$
It is easy to see that the solutions $\psi_\e=\psi_0+\eta_\e$ of the Lichnerowicz equation correspond to the fixed points $\eta_\e$ of $\mathcal{G}_\e$. 
In what follows we argue that $\mathcal{G}_\e$ is a contraction mapping from a small ball in $C^{2,\alpha}(M_\e)$ to itself.

\begin{lemma}
For sufficiently large $C$ and sufficiently small $\e$,
the map $\mathcal{G}_\e$
is a contraction of the closed ball
of radius $C\sqrt{\e}$ around $0$ in $C^{2,\alpha}(M_\e)$.
\end{lemma}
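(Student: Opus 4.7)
The plan is to verify the two conditions of the Banach Fixed Point Theorem for $\mathcal{G}_\e$ on the closed ball $B_{C\sqrt\e}\subset C^{2,\alpha}(M_\e)$: (i) the self-map property $\mathcal{G}_\e(B_{C\sqrt\e})\subset B_{C\sqrt\e}$, and (ii) a strict contraction estimate on $B_{C\sqrt\e}$. The linear part of the argument is already in place: Theorem \ref{linLichn} provides a uniform bound $K$ on $\|(\linP_\e)^{-1}\|_{C^{0,\alpha}\to C^{2,\alpha}}$, and Lemma \ref{N(1)estimate} provides a constant $A$ with $\|\Lie_\e(\psi_0)\|_{0,\alpha}\le A\sqrt\e$. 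So everything reduces to estimating the nonlinear remainder $\Q_\e$ uniformly in $\e$.

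The key structural observation is that both expressions $(1+\eta)^{-7}-1+7\eta$ and $(1+\eta)^5-1-5\eta$ are real-analytic near $\eta=0$ and vanish there to second order; accordingly I would factor
$$\Q_\e(\eta)=\eta^2\Bigl(\tfrac{1}{8}|\nu_\e|^2_{g_\e}\,h_1(\eta)-\tfrac{3}{4}h_2(\eta)\Bigr),$$
where $h_1,h_2$ are smooth functions of $\eta$ with $h_i(0)\ne 0$. Combining \eqref{nu:est} with the fact that $C^{2,\alpha}(M_\e)$ is a Banach algebra (and that compositions with smooth functions are controlled) uniformly in $\e$---both consequences of the uniform M\"obius-chart framework of Section \ref{holder}---I would obtain a uniform quadratic bound
$$\|\Q_\e(\eta)\|_{0,\alpha}\le B\,\|\eta\|_{2,\alpha}^2\qquad\text{for }\|\eta\|_{2,\alpha}\le 1,$$
with $B$ independent of $\e$. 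For any $\eta\in B_{C\sqrt\e}$ this yields
$$\|\mathcal{G}_\e(\eta)\|_{2,\alpha}\le K\bigl(A\sqrt\e+BC^2\e\bigr)=(KA+KBC^2\sqrt\e)\sqrt\e,$$
which is $\le C\sqrt\e$ once $C>KA$ and $\e$ is small, proving (i).

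For (ii) I would exploit the same second-order vanishing, writing
$$\Q_\e(\eta_1)-\Q_\e(\eta_2)=\int_0^1\Q_\e'\bigl(s\eta_1+(1-s)\eta_2\bigr)(\eta_1-\eta_2)\,ds$$
and using $\Q_\e'(0)=0$, together with the same Banach-algebra considerations, to produce a uniform Lipschitz estimate of the form
$$\|\Q_\e(\eta_1)-\Q_\e(\eta_2)\|_{0,\alpha}\le B'\bigl(\|\eta_1\|_{2,\alpha}+\|\eta_2\|_{2,\alpha}\bigr)\|\eta_1-\eta_2\|_{2,\alpha}.$$
On $B_{C\sqrt\e}$ this gives Lipschitz constant $\le 2KB'C\sqrt\e$ for $\mathcal{G}_\e$, which is $<1$ for $\e$ small enough. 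Together with (i) this completes the proof.

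The main obstacle I anticipate is not the algebraic manipulation of $\Q_\e$ but rather ensuring that the constants $B$ and $B'$ in the nonlinear estimates are genuinely independent of $\e$. This is delicate because the $C^{2,\alpha}(M_\e)$ norm depends on a family of M\"obius-type parametrizations that degenerate in the neck as $\e\to 0$. However, the uniform chart framework of Section \ref{holder}---already invoked for Proposition \ref{uni-reg} and Theorem \ref{linLichn}---provides exactly the uniform multiplication, composition, and Schauder estimates needed to make the Banach-algebra argument $\e$-independent, so no new analytic input is required.
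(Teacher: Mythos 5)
Your proposal is correct and follows essentially the same route as the paper: both arguments combine the uniform bound on $(\linP_\e)^{-1}$ from Theorem \ref{linLichn}, the $O(\sqrt{\e})$ bound on $\Lie_\e(\psi_0)$ from Lemma \ref{N(1)estimate}, and the bound \eqref{nu:est} on $|\nu_\e|^2_{g_\e}$ with a quadratic-vanishing estimate for $\Q_\e$ to get the self-map and contraction properties on the ball of radius $C\sqrt{\e}$. Your explicit factorization of $\Q_\e(\eta)$ and integral-remainder Lipschitz bound simply make precise the ``binomial expansion'' computation the paper leaves implicit, with the same reliance on the uniform M\"obius-chart structure for $\e$-independence of the constants.
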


\begin{proof}

Let $\eta_1, \eta_2\in C^{2,\alpha}(M_\e)$ be of norm $O(\sqrt{\e})$.
Assuming in addition that $|\eta_1|<1$ and $|\eta_2|<1$, using the bound on $|\nu_\e|_{g_\e}^2$ expressed in \eqref{nu:est}, and using the binomial expansion formulae, we find that for sufficiently small $\e>0$,
\begin{align*}
\|\Q_\e(\eta_2)-\Q_\e(\eta_1)\|_{0,\alpha}&=\left\| \frac{1}{8}|\nu_\e|^2 \Big[(1+\eta_1)^{-7}-(1+\eta_2)^{-7} +7(\eta_1-\eta_2)\Big]\right.\\
&\qquad \left.-\frac{3}{4}\Big[(1+\eta_1)^5- (1+\eta_2)^5 -5(\eta_1-\eta_2)\Big]\right\|_{0,\alpha}\\
&\le O(\sqrt{\e})\|\eta_2-\eta_1\|_{0,\alpha}
\le O(\sqrt{\e})\|\eta_2-\eta_1\|_{2,\alpha}.
\end{align*}

A similar calculation shows that if $\|\eta\|_{2,\alpha}=O(\sqrt{\e})$ then  $\|\Q_\e(\eta)\|_{0,\alpha}=O(\e)$.
As a consequence of  Lemma \ref{N(1)estimate}, functions $\eta$ with  $\|\eta\|_{2,\alpha}=O(\sqrt{\e})$ also satisfy
$$\|\Lie_\e(\psi_0)+\Q_\e(\eta)\|_{0,\alpha}=O(\sqrt{\e}).$$
Combining this with
Theorem \ref{linLichn} we have  that  there is a  sufficiently large constant $C>0$ such that for sufficiently small $\e>0$
$$\|\mathcal{G}_\e(\eta)\|_{2,\alpha}=\big\|(\linP_\e)^{-1}\big(\Lie_\e(1)+\Q_\e(\eta)\big)\big\|_{2,\alpha}\le C\sqrt{\e}.$$
Thus we have determined that  $\mathcal{G}_\e:\overline{B}_{C\sqrt{\e}}\to \overline{B}_{C\sqrt{\e}}$.

To see that this map is a contraction, we compute
$$\|\mathcal{G}_\e(\eta_1)-\mathcal{G}_\e(\eta_2)\|_{2,\alpha}=\big\|(\linP_\e)^{-1}\big(\Q_\e(\eta_1)-\Q_\e(\eta_2)\big)\big\|_{2,\alpha} =O(\sqrt{\e})\|\eta_1-\eta_2\|_{2,\alpha}.$$
It thus follows that if $\e>0$ is small enough, the map $\mathcal{G}_\e:\overline{B}_{C\sqrt{\e}}\to \overline{B}_{C\sqrt{\e}}$ is a contraction.
\end{proof}

We are now ready to state and prove our main result regarding solutions of  the Lichnerowicz equation for the parametrized sets of conformal data $(M_\e, g_\e, \nu_\e, \tau=3)$:

\begin{theorem}\label{thm:Lichnerowicz-main}
If $\e$ is sufficiently small, there exists a polyhomogeneous function $\psi_\e$ on $M_\e$ which has a $C^2$
extension to $\Mbar_\e$ that is equal to $1$ on $\partial\Mbar_\e$, and satisfies 
\begin{equation}\label{eq:Lich}
\Delta_{g_\e}\psi_\e-\frac{1}{8}R(g_\e) \psi_\e
+\frac{1}{8}|\nu_\e|_{g_\e}^2\psi_\e^{-7}-\frac{3}{4}\psi_\e^5=0.
\end{equation}
The function $\psi_\e$ is a small perturbation of the constant function $\psi_0\equiv 1$ in the sense that  
$$\|\psi_\e - \psi_0\|_{2,\alpha}=O(\sqrt{\e}) \text{\ \ as\ \ }\e\to 0.$$
\end{theorem}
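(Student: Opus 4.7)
The plan is to combine the contraction-mapping lemma just established with the weighted-space invertibility of $\linP_\e$ and the Andersson--Chru\'sciel polyhomogeneity theory from Section \ref{AC}.

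First, the contraction-mapping lemma produces, for each sufficiently small $\e>0$, a unique fixed point $\eta_\e\in C^{2,\alpha}(M_\e)$ of the map $\mathcal{G}_\e$ inside the closed ball of radius $C\sqrt{\e}$. I set $\psi_\e:=\psi_0+\eta_\e=1+\eta_\e$. The estimate $\|\eta_\e\|_{2,\alpha}=O(\sqrt{\e})$ is automatic, and it implies that $\psi_\e>0$ everywhere on $M_\e$ once $\e$ is small. Expanding the definition of $\mathcal{G}_\e$, the fixed-point identity $\eta_\e=\mathcal{G}_\e(\eta_\e)$ is equivalent to $\linP_\e\eta_\e=-\Lie_\e(\psi_0)-\Q_\e(\eta_\e)$, which, in view of the definitions of $\linP_\e$, $\Lie_\e$, and $\Q_\e$, rearranges to $\Lie_\e(\psi_\e)=0$; i.e., $\psi_\e$ solves \eqref{eq:Lich}.

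Next, I upgrade the regularity at infinity. The source term $\Lie_\e(\psi_0)$ lies in $C^{0,\alpha}_1(M_\e)$ by Lemma \ref{N(1)estimate}, and an inspection of the quadratic error $\Q_\e(\eta_\e)$ shows that it lies in the weighted space $C^{0,\alpha}_1(M_\e)$ as well, because $|\nu_\e|_{g_\e}^2\in C^{0,\alpha}_1(M_\e)$ and $\eta_\e$ is bounded. Therefore the right-hand side of $\linP_\e\eta_\e=-\Lie_\e(\psi_0)-\Q_\e(\eta_\e)$ belongs to $C^{0,\alpha}_1(M_\e)$. By Theorem \ref{linLichn}, the weighted map $\linP_\e\colon C^{2,\alpha}_1(M_\e)\to C^{0,\alpha}_1(M_\e)$ is invertible; its unique preimage $\tilde\eta_\e$ of the same right-hand side is automatically an element of $C^{2,\alpha}(M_\e)$, so by uniqueness of $\linP_\e^{-1}$ on $C^{2,\alpha}(M_\e)$ we have $\tilde\eta_\e=\eta_\e$. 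Thus $\eta_\e\in C^{2,\alpha}_1(M_\e)$, which means that $\eta_\e$ decays like $\rho_\e$ near the ideal boundary and in particular $\psi_\e$ extends continuously to $1$ on $\partial\Mbar_\e$.

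Finally, I invoke the polyhomogeneous theory of Andersson--Chru\'sciel summarized in Section \ref{AC}. The coefficients of \eqref{eq:Lich} are polyhomogeneous: $g_\e$ is polyhomogeneous by construction, $R(g_\e)+6$ is polyhomogeneous with leading behavior $O(\rho_\e)$ (cf.\ \eqref{scalarcurv:comp}), and $|\nu_\e|_{g_\e}^2$ is polyhomogeneous because $\nu_\e$ itself is polyhomogeneous by Theorem \ref{thm:nu-convergence}. The existence/uniqueness result in \cite{piot} for the Lichnerowicz boundary-value problem with polyhomogeneous data and boundary value $1$ then produces a polyhomogeneous solution $\psi_\e^{\mathrm{phg}}$ of \eqref{eq:Lich} whose asymptotic expansion has the form $1+\rho_\e\overline\psi_0+\cdots$ predicted by the indicial-root analysis, with the first log terms appearing only at the power $\rho_\e^3$ or later. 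Since $\psi_\e^{\mathrm{phg}}-1\in C^{2,\alpha}_1(M_\e)$ as well, the Andersson--Chru\'sciel uniqueness clause forces $\psi_\e^{\mathrm{phg}}=\psi_\e$. The absence of log terms below $\rho_\e^3$ then guarantees that $\psi_\e$ has a $C^2$ extension to $\Mbar_\e$ equal to $1$ on $\partial\Mbar_\e$, completing the proof.

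I expect the main obstacle to be the polyhomogeneity bookkeeping in the last paragraph---making sure the Andersson--Chru\'sciel uniqueness statement applies to our contraction-map solution rather than only to the particular polyhomogeneous solution produced by their iterative scheme. The subtlety is that their theorem is stated for the boundary-value problem, so one must check that the natural class of decaying solutions in which their uniqueness holds is exactly the weighted class $C^{2,\alpha}_1(M_\e)$ in which we have already placed $\eta_\e$.
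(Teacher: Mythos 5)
Your first step (the fixed point of $\mathcal{G}_\e$ gives a solution with $\|\psi_\e-\psi_0\|_{2,\alpha}=O(\sqrt{\e})$) matches the paper, but the regularity upgrade in your second paragraph has a genuine gap. You claim $\Q_\e(\eta_\e)\in C^{0,\alpha}_1(M_\e)$ ``because $|\nu_\e|^2_{g_\e}\in C^{0,\alpha}_1$ and $\eta_\e$ is bounded,'' but only the first term of $\Q_\e$ carries the factor $|\nu_\e|^2_{g_\e}$. The second term, $-\tfrac34\bigl((1+\eta_\e)^5-1-5\eta_\e\bigr)=-\tfrac34\bigl(10\eta_\e^2+\cdots\bigr)$, has no weight attached: at this stage you only know $\eta_\e\in C^{2,\alpha}(M_\e)$, with no decay toward $\partial\Mbar_\e$, so that term lies only in $C^{0,\alpha}_0$, not $C^{0,\alpha}_1$. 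The bootstrap is therefore circular---to put $\Q_\e(\eta_\e)$ in the weighted space you need $\eta_\e$ to decay at least like $\rho_\e^{1/2}$, which is exactly what you are trying to establish---and it cannot even get started, since with weight $0$ on the right-hand side the invertibility of $\linP_\e$ returns only $\eta_\e\in C^{2,\alpha}(M_\e)$. Indeed, the paper states explicitly that it was unable to obtain weighted estimates on $\psi_\e-\psi_0$, and its conclusion is correspondingly weaker than yours: it never asserts $\eta_\e\in C^{2,\alpha}_1(M_\e)$, only that $\psi_\e$ tends to $1$ at the ideal boundary.

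The paper's route around this is worth comparing with yours. Since $\mathcal{G}_\e$ does map $C^{2,\alpha}_1(M_\e)$ into itself (for $\eta\in C^{2,\alpha}_1$ the quadratic terms are $O(\rho_\e^2)$, so the obstruction above disappears) and the Picard iteration starts at $\eta_{0,\e}=0\in C^{2,\alpha}_1(M_\e)$, every iterate $\eta_{n,\e}$ lies in $C^{2,\alpha}_1(M_\e)$ and hence extends continuously to $\Mbar_\e$ with zero boundary values; convergence in the unweighted $C^{2,\alpha}$ norm is in particular uniform, so the limit $\eta_\e$ still extends continuously and vanishes on $\partial\Mbar_\e$, even though no weighted membership is claimed for it. With $\psi_\e\to 1$ at the ideal boundary in hand, the paper applies the Andersson--Chru\'sciel regularity result (Corollary 7.4.2 of \cite{piot}) directly to the solution $\psi_\e$ to conclude polyhomogeneity, and then reads off from the indicial roots that the first log terms occur with $\rho_\e^3$, which yields the $C^2$ extension. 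This also sidesteps the matching-by-uniqueness concern you raise in your final paragraph: no separately constructed polyhomogeneous solution of the boundary-value problem is needed, so no identification of it with the contraction-map solution is required. If you repair your argument by replacing the weighted bootstrap with the ``iterates lie in $C^{2,\alpha}_1$, uniform limits preserve the boundary value'' observation, the rest of your outline goes through essentially as in the paper.
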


\begin{proof}
By the Banach Fixed Point Theorem, 
the sequence 
$$\eta_{0,\e}:=0,\ \eta_{1,\e}:=\mathcal{G}_\e(\eta_{0,\e}),\ \dots \ , 
\eta_{n,\e}:=\mathcal{G}_\e(\eta_{n-1,\e}),\dots$$
converges in $\overline B_{C\sqrt{\e}}\subseteq C^{2,\alpha}(M_\e)$. 
Thus, there exists a function $\eta_\e$ on $M_\e$ such that
$\|\eta_\e\|_{2,\alpha}\le C\sqrt{\e}$ and such that
the function $\psi_\e:=\psi_0+\eta_\e$ solves the Lichnerowicz equation. 
To address the regularity of $\psi_\e$, 
note that $\eta_{n,\e}\in C^{2,\alpha}_1(M_\e)$ for all $n,\e$. Consequently, each $\eta_{n,\e}$
has a continuous extension  to $\Mbar_\e$ that
vanishes on $\partial\Mbar_\e$.
Because convergence in $C^{2,\alpha}(M_\e)$ implies uniform convergence, 
it follows that, for each fixed $\e$, the limit $\eta_\e:=\lim_{n\to \infty}\eta_{n,\e}$ 
also has a continuous extension to $\Mbar_\e$ and vanishes on $\partial\Mbar_\e$.
We now conclude that  $\psi_\e=\psi_0+\eta_\e$ approaches $1$ at the ideal boundary. Therefore, Corollary 7.4.2 of \cite{piot} applies and we see that $\psi_\e$ is polyhomogeneous. Inserting the asymptotic expansion for $\psi_\e$ into \eqref{eq:Lich}
and comparing like terms inductively,  we find that  the first log terms in 
$\psi_\e$ appear with $\rho_\e^3$. (These terms arise as a consequence of the indicial roots of the linearized Lichnerowicz operator.) 
It follows that  $\psi_\e$ has a $C^2$
extension to $\Mbar_\e$.
\end{proof}

With these solutions $\psi_\e$ to the Lichnerowicz equation in hand, we readily verify that the one-parameter family of initial data sets $(M_\e, \psi_\e^4 g_\e, \psi_\e^{-2} \nu_\e + \psi_\e^4 g_\e )$ satisfies the list of properties  outlined in \ref{thm:main}.
Hence we have constructed the desired asymptotic gluing of AH initial data satisfying the Einstein constraint equations.

\section{Conclusions}\label{sec:concl}

The gluing construction which we have discussed and verified here allows one to take a pair (or more) of CMC initial data sets for isolated systems with unique asymptotic regions--either asymptotically null data sets in asymptotically flat spacetimes, or data sets in asymptotically deSitter spacetimes--and glue them together in such a way that the spacetime which develops from this glued data has a single asymptotic region. In the case that the original data sets are asymptotically null, one may wonder how the Bondi mass \cite{Wald} for the glued data compares with the Bondi masses for the original data sets. We will study this issue in future work.

There are a number of ways in which the results proven here might be extended. It should be straightforward to be able to handle solutions of the Einstein-Maxwell or Einstein-fluid constraints, rather than the Einstein vacuum constraint equations. A more challenging generalization we plan to consider is to allow for initial data sets which do not have constant mean curvature. We have done this in earlier gluing work \cite{CIP} using localized deformations of the original data sets so that, in small neighborhoods of the gluing points, the mildly perturbed original initial data sets do have constant mean curvature. The work of Bartnik \cite{B} shows that this sort of deformation can always be done. A key first step in generalizing our results here to non CMC initial data sets is to generalize Bartnik's local CMC deformation results to neighborhoods of asymptotic points in AH initial data sets. This issue is under consideration. 

One further generalization of some interest is to attempt to carry out localized gluing at asymptotic points in AH initial data sets. To do this, it would likely be necessary to determine if the work of Chru\'sciel and Delay \cite{CD} generalizes so that it holds in asymptotic neighborhoods in AH initial data sets. While this may prove to be difficult, we do believe that we will be able to localize the gluing to the extent that in regions bounded away from the ideal boundary, the glued data is unchanged from the original data.

\end{document}